\newcommand\Hom{\operatorname{Hom}}
\newcommand\Sp{\operatorname{Sp}}
\newcommand\frh{\mathfrak{h}}
\newcommand\tensor{\otimes}
\newcommand\codim{\operatorname{codim}}
\newcommand\GL{\operatorname{GL}}\newcommand\LG{\operatorname{LG}}\newcommand\SP{\operatorname{Sp}}
\newcommand\my{\mathcal{Y}}
\newcommand\Ker{\operatorname{Ker}}
\newcommand\Det{\operatorname{Det}}
\newcommand\Gr{\operatorname{Gr}}
\newcommand\SL{\operatorname{SL}}
\newtheorem{theorem}{Theorem}[section]
\newtheorem{remark}[theorem]{Remark}
\newtheorem{corollary}[theorem]{Corollary}
\newtheorem{proposition}[theorem]{Proposition}
\newtheorem{lemma}[theorem]{Lemma}
\newtheorem{definition}[theorem]{Definition}
\theoremstyle{definition}
\newtheorem{example}[theorem]{\bf Example}
  \newcommand{\fb}{\mathfrak{b}}
 \newcommand{\fg}{\mathfrak{g}}
 \newcommand{\fh}{\mathfrak{h}}
 \newcommand{\fl}{\mathfrak{l}}
 \newcommand{\fp}{\mathfrak{p}}
 \newcommand{\fu}{\mathfrak{u}}
 \newcommand{\Y}{\mathfrak{Y}}
  \newcommand{\Lam}{\Lambda}
\newcommand{\beqn}{\begin{equation}}
\newcommand{\eeqn}{\end{equation}}
  \newcommand{\al}{\alpha}
  \newcommand{\del}{\delta}  \newcommand{\Del}{\Delta}
  \newcommand{\gam}{\gamma}   
\newcommand{\Exp}{\operatorname{Exp}}
 \newcommand{\cf}{\mathcal{F}}
 \newcommand{\cl}{\mathcal{L}}
\def\ip<#1>{\langle#1\rangle}
\begin{document}
\title[Fulton's conjecture for arbitrary groups]{A generalization of Fulton's
 conjecture for arbitrary groups}

\author{Prakash Belkale}
\author{Shrawan Kumar}
\author{Nicolas Ressayre}
\footnote{P.B. and S.K. were supported by NSF  grants.\\
N.R. was supported by the French National Research Agency (ANR-09-JCJC-0102-01).\\ 
Mathematics Subject Classification 2010: 14C17, 14L30, 20G05.}
 \maketitle

\begin{abstract} We prove a generalization of Fulton's conjecture which 
relates intersection theory on an arbitrary flag variety to invariant 
theory. \end{abstract}

\section{Introduction} \subsection{The context of Fulton's original 
conjecture}\label{original} Let $L$ be a connected reductive complex 
algebraic group with a Borel subgroup $B_L$ and maximal torus $H\subset 
B_L$. The set of isomorphism
 classes of finite dimensional irreducible representations of $L$ are 
parametrized by the set $X(H)^+$ of $L$-dominant characters of $H$ via the 
highest weight. For $\lambda \in X(H)^+$, let $V(\lambda) = V_L(\lambda)$ 
be the corresponding irreducible representation of $L$ with highest weight 
$\lambda$.
 Define the {\em Littlewood-Richardson coefficients} 
$c^{\nu}_{\lambda,\mu}$ by: $$V(\lambda)\tensor 
V(\mu)=\sum_{\nu}c^{\nu}_{\lambda,\mu} V(\nu).$$

The following result was conjectured by Fulton and proved by 
Knutson-Tao-Woodward [KTW]. (Subsequently, geometric proofs were given by 
Belkale [B$_2$] and Ressayre [R$_2$].) \begin{theorem}\label{1.1} Let
 $L=\GL(r)$ and let $\lambda,\mu,\nu\in X(H)^+$. Then, if 
$c^{\nu}_{\lambda,\mu}=1$, we have $c^{n\nu}_{n\lambda,n\mu}=1$ for every 
positive integer $n$.

(Conversely, if $c^{n\nu}_{n\lambda,n\mu}=1$ for some positive integer $n$, then
$c^{\nu}_{\lambda,\mu}=1$. This follows from the saturation theorem of Knutson-Tao.)
\end{theorem}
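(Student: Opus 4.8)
The plan is to translate the hypothesis $c^{\nu}_{\lambda,\mu}=1$ into a transversality (rigidity) statement about Schubert varieties and then to show this rigidity survives dilation. First I would reformulate the coefficient geometrically: replacing each $\GL(r)$-irreducible by an $\SL(r)$-irreducible twisted by a power of $\det$ and using that $c^{\nu}_{\lambda,\mu}\neq 0$ forces $|\lambda|+|\mu|=|\nu|$, the twists cancel and $c^{\nu}_{\lambda,\mu}=\dim\bigl(V(\lambda)\tensor V(\mu)\tensor V(\nu)^*\bigr)^{\SL(r)}$, which by Borel--Weil equals $\dim H^0\bigl((G/B)^3,\mathcal{L}\bigr)^{G}$ with $G=\SL(r)$ and $\mathcal{L}=\mathcal{L}_{\lambda}\boxtimes\mathcal{L}_{\mu}\boxtimes\mathcal{L}_{\nu^*}$. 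Equivalently, and in the form the paper's philosophy suggests, $c^{\nu}_{\lambda,\mu}$ is a triple Schubert intersection number $\deg(\Omega_1\cdot\Omega_2\cdot\Omega_3)$ on a Grassmannian $\Gr(r,N)$, computed as the number of points of $\Omega_1(F^1_\bullet)\cap\Omega_2(F^2_\bullet)\cap\Omega_3(F^3_\bullet)$ for flags $F^j_\bullet$ in general position.

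If $c^{\nu}_{\lambda,\mu}=1$ then for generic flags the three Schubert varieties meet in a single point $V_0\in\Gr(r,N)$, and since they meet properly with total intersection number $1$ this point is reduced; hence the $\Omega_i$ are smooth at $V_0$ and $T_{V_0}\Omega_1\cap T_{V_0}\Omega_2\cap T_{V_0}\Omega_3=0$ inside $T_{V_0}\Gr(r,N)=\Hom(V_0,\mathbb{C}^N/V_0)$. In invariant-theoretic language, the unique invariant section $s\in H^0\bigl((G/B)^3,\mathcal{L}\bigr)^{G}$ is non-vanishing at a point which is infinitesimally as rigid as possible --- the relevant obstruction $H^1$ vanishes. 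This transversality is the object I would try to propagate.

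Now for the dilation. Since $\mathcal{L}_{n\lambda}=\mathcal{L}_{\lambda}^{\tensor n}$ we have $c^{n\nu}_{n\lambda,n\mu}=\dim H^0\bigl((G/B)^3,\mathcal{L}^{\tensor n}\bigr)^{G}$, and on the Grassmannian side this becomes an intersection number on a larger Grassmannian obtained from $\Gr(r,N)$ by tensoring with $\mathbb{C}^n$: the Schubert conditions are refined by an auxiliary generic complete flag on $\mathbb{C}^n$, and the distinguished point $V_0$ is replaced by $V_0\tensor\mathbb{C}^n$. The tangent space to each dilated Schubert variety at $V_0\tensor\mathbb{C}^n$ lies inside $(T_{V_0}\Omega_i)\tensor\mathbb{C}^n$, so the elementary identity $\bigcap_i\bigl((T_{V_0}\Omega_i)\tensor\mathbb{C}^n\bigr)=\bigl(\bigcap_i T_{V_0}\Omega_i\bigr)\tensor\mathbb{C}^n=0$ shows $V_0\tensor\mathbb{C}^n$ is again a reduced point of the dilated intersection; together with the classical fact that $\{(\lambda',\mu',\nu'):c^{\nu'}_{\lambda',\mu'}\neq0\}$ is a semigroup, this already gives $c^{n\nu}_{n\lambda,n\mu}\geq 1$.

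Everything then comes down to the upper bound $c^{n\nu}_{n\lambda,n\mu}\leq 1$, i.e. to ruling out any point of the dilated intersection other than $V_0\tensor\mathbb{C}^n$. This is the step I expect to be hard, and it is precisely where the special structure must enter --- for a general polarized variety $h^0=1$ does not force $h^0$ of all powers to be $1$. My approach would be to degenerate the auxiliary flag on $\mathbb{C}^n$ to a coordinate flag, thereby degenerating the dilated configuration to a block-triangular one; in the flat limit any point $W$ of the intersection carries a filtration whose successive quotients each satisfy the \emph{original} Schubert conditions, so by the rigidity above each quotient equals $V_0$ and $\operatorname{gr}(W)=V_0\tensor\mathbb{C}^n$. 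A final argument using the \emph{infinitesimal} rigidity of $V_0$ (the $H^1$-vanishing) should upgrade this to $W=V_0\tensor\mathbb{C}^n$, so that the limit --- hence the generic --- intersection is a single reduced point. This rigidity-propagation is the heart of the matter. Combining the two bounds yields $c^{n\nu}_{n\lambda,n\mu}=1$; the parenthetical converse is, as indicated, the Knutson--Tao saturation theorem.
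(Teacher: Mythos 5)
Your reduction to the two bounds is the right way to organize things, and the lower bound $c^{n\nu}_{n\lambda,n\mu}\geq 1$ via the semigroup property of nonvanishing LR coefficients is fine. But the upper bound, which you correctly identify as the entire content of the theorem, is not actually proved. Two concrete problems. First, the identification of $c^{n\nu}_{n\lambda,n\mu}$ with an intersection number on the ``tensored'' Grassmannian $\Gr(rn,Nn)$, with Schubert conditions refined by an auxiliary flag on $\mathbb{C}^n$ and distinguished point $V_0\otimes\mathbb{C}^n$, is asserted rather than established; the Schubert position of $V_0\otimes\mathbb{C}^n$ relative to a flag of the form $F_\bullet\otimes E_\bullet$ corresponds to the Young diagram of $\lambda$ dilated in \emph{both} directions (each part multiplied by $n$ and repeated $n$ times), and the associated intersection number is not obviously $c^{n\nu}_{n\lambda,n\mu}$ (which lives on $\Gr(r,r+nk)$). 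Second, even granting that setup, the decisive step --- upgrading $\operatorname{gr}(W)=V_0\otimes\mathbb{C}^n$ to $W=V_0\otimes\mathbb{C}^n$ and controlling the multiplicity of the flat limit --- is exactly the statement to be proved, and you offer only ``a final argument \dots should upgrade this.'' Transversality of the original intersection at the single point $V_0$ gives no $H^1$-vanishing for the dilated problem for free; as you yourself note, $h^0(\mathcal{L})=1$ does not imply $h^0(\mathcal{L}^{\otimes n})=1$ in general, so the burden of proof sits entirely on the step that is missing.

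For comparison, the paper's mechanism is different and is precisely designed to avoid this difficulty. One forms the universal intersection variety $\mathcal{Z}$ (triples of translated Schubert varieties together with a point of their intersection) with its projection $\pi$ to the product of flag varieties; the hypothesis $c^{\nu}_{\lambda,\mu}=1$ makes $\pi$ \emph{birational}. The space of invariants $\bigl[V(n\lambda)\otimes V(n\mu)\otimes V(n\nu)^*\bigr]^{\SL(r)}$ is then identified, via Borel--Weil and an explicit computation of the determinant line of $D\pi$, with $H^0(\mathcal{Z},\mathcal{O}(nR))^{G}$, where $R$ is the ramification divisor of $\pi$ (in the cominuscule Grassmannian case $\mathcal{Z}=\mathcal{Y}$, so none of the codimension-one boundary analysis is needed). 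Zariski's main theorem then shows that $\pi(\mathcal{Z}\setminus R)$ has complement of codimension at least two, whence $h^0(\mathcal{Z},\mathcal{O}(nR))=1$ for every $n$: no multiple of the ramification divisor of a birational map moves. This ``ramification divisor cannot move'' principle is the idea your proposal lacks, and without it (or a genuine substitute for your degeneration step) the argument does not close.
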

Replacing $V(\nu)$ by the dual $V(\nu)^*$, the above theorem is equivalent to the following:
\begin{theorem} Let
 $L=\GL(r)$ and let $\lambda,\mu,\nu\in X(H)^+$. Then,
if $[V(\lambda)\otimes V(\mu)\otimes V(\nu)]^{\SL(r)}=1$,
we have $[V(n\lambda)\otimes V(n\mu)\otimes V(n\nu)]^{\SL(r)}=1$,
 for  every positive integer $n$.
\end{theorem}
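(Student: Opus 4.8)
The plan is to observe that this statement is merely Theorem~\ref{1.1} rewritten: the only content of the proof is a translation between $\SL(r)$-invariants of a triple tensor product and Littlewood--Richardson coefficients, keeping careful track of a determinant twist and of duality.

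First I would record the elementary dictionary. For $\GL(r)$ one has $c^{\nu}_{\lambda,\mu}=\dim\Hom_{\GL(r)}\bigl(V(\nu),V(\lambda)\tensor V(\mu)\bigr)=\dim\bigl[V(\lambda)\tensor V(\mu)\tensor V(\nu)^*\bigr]^{\GL(r)}$, and $V(\nu)^*\cong V(-w_0\nu)$, where $w_0$ is the longest element of the Weyl group $S_r$; for $\GL(r)$ the map $\nu\mapsto\nu^*:=-w_0\nu$ is an involutive bijection of $X(H)^+$ that commutes with multiplication by positive integers. Hence, after substituting $\nu^*$ for $\nu$, Theorem~\ref{1.1} already asserts the desired implication with $[\,\cdot\,]^{\GL(r)}$ in place of $[\,\cdot\,]^{\SL(r)}$.

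Next I would pass from $\GL(r)$-invariants to $\SL(r)$-invariants. Put $W:=V(\lambda)\tensor V(\mu)\tensor V(\nu)$. Since $\SL(r)$ is normal in $\GL(r)$, the trivial-isotypic $\SL(r)$-component $[W]^{\SL(r)}$ is a $\GL(r)$-submodule, and a $\GL(r)$-irreducible restricts trivially to $\SL(r)$ precisely when it is a power $\det^{d}$ of the determinant; therefore $\dim[W]^{\SL(r)}=\sum_{d\in\mathbb Z}\dim\bigl[W\tensor\det^{-d}\bigr]^{\GL(r)}$. Comparing the action of the scalar matrices, the $d$-th summand vanishes unless $rd=s:=|\lambda|+|\mu|+|\nu|$ (writing $|\xi|$ for the sum of the coordinates of $\xi$), so at most one term survives. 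Thus $\dim[W]^{\SL(r)}=0$ unless $r\mid s$, and if $r\mid s$ then, with $d:=s/r$ and $\omega_d:=(d,\dots,d)$,
\[
\dim[W]^{\SL(r)}=\dim\bigl[V(\lambda)\tensor V(\mu)\tensor V(\nu-\omega_d)\bigr]^{\GL(r)}=c^{(\nu-\omega_d)^*}_{\lambda,\mu},
\]
using $W\tensor\det^{-d}=V(\lambda)\tensor V(\mu)\tensor V(\nu-\omega_d)$ and that $\nu-\omega_d$ is still $\GL(r)$-dominant.

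Finally I would conclude by scaling. If $\dim[W]^{\SL(r)}=1$ then $r\mid s$, and with $d=s/r$ the displayed identity gives $c^{(\nu-\omega_d)^*}_{\lambda,\mu}=1$; Theorem~\ref{1.1} then yields $c^{\,n(\nu-\omega_d)^*}_{n\lambda,n\mu}=1$ for every $n\ge 1$. Now $n(\nu-\omega_d)^*=(n\nu-\omega_{nd})^*$ and $|n\lambda|+|n\mu|+|n\nu|=ns=r(nd)$, so the displayed formula applied to the triple $(n\lambda,n\mu,n\nu)$ --- whose associated integer is $nd$ --- gives $\dim\bigl[V(n\lambda)\tensor V(n\mu)\tensor V(n\nu)\bigr]^{\SL(r)}=c^{\,n(\nu-\omega_d)^*}_{n\lambda,n\mu}=1$, as required; each step is reversible, which also re-proves the asserted equivalence with Theorem~\ref{1.1}. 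There is no real obstacle here: the work is purely organizational, the one point to get right being that the $\det$-twist needed after scaling the weights by $n$ is $\det^{-nd}$, not $\det^{-d}$, so that twisting, dualizing via $w_0$, and scaling remain mutually compatible --- all the substantive content is already in Theorem~\ref{1.1}.
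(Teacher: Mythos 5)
Your proof is correct and follows exactly the route the paper intends: the paper treats this theorem as an immediate reformulation of Theorem~\ref{1.1} obtained by ``replacing $V(\nu)$ by the dual $V(\nu)^*$,'' and offers no further argument. You have simply supplied the bookkeeping (the determinant twist forced by the central character, the involution $\nu\mapsto -w_0\nu$, and their compatibility with scaling by $n$), all of which checks out.
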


The direct generalization of the above theorem for an arbitrary reductive 
$L$ is false (see Example ~\ref{example}(3)). It is also known that the 
saturation theorem fails for arbitrary reductive groups. It is a challenge 
to find an appropriate version of the above result for 
$\operatorname{GL}(r)$ which holds in the setting of general reductive 
groups.

The aim of this paper is to achieve one such generalization. This 
generalization is a relationship between the intersection theory of 
homogeneous spaces and the invariant theory. To obtain this generalization, we must first 
reinterpret the above result for $\GL(r)$ as follows.

Without loss of generality, we only consider the irreducible polynomial representations of
$\GL(r)$. These are parametrized by
the sequences
$\lambda=(\lambda_1\geq \lambda_2\geq \dots \geq \lambda_r\geq 0)$, where we view any such $\lambda$
 as the dominant character
 $\text{diag} (t_1, \dots, t_r)\mapsto t_1^{\lambda_1}\dots  t_r^{\lambda_r}$
 of the standard maximal torus consisting of the diagonal matrices in $\GL(r)$.
Let $\mathfrak{P}(r)$ be the set of such sequences (also called Young diagrams or
partitions) $\lambda=(\lambda_1\geq \lambda_2\geq
\dots \geq \lambda_r\geq 0)$ and let $\mathfrak{P}_k(r)$ be the subset of $\mathfrak{P}(r)$
consisting of those partitions $\lambda$ such that $\lambda_1\leq k$.
Then, the Schubert cells in the  Grassmannian $\Gr(r, r+k)$ of $r$-planes in $\Bbb C^{r+k}$
are parametrized by $\mathfrak{P}_k(r)$ (cf. [F$_2$, $\S$9.4]). For $\lambda \in
\mathfrak{P}_k(r)$, let $\sigma_\lambda$ be the corresponding Schubert cell and
$\bar{\sigma}_\lambda$ its closure. By a classical theorem (cf. loc. cit.), the
structure constants for the intersection product in $H^*(\Gr(r, r+k), \Bbb Z)$ in the basis
$[\bar{\sigma}_\lambda]$ coincide with the corresponding Littlewood-Richardson coefficients
for the representations of $\GL(r)$.
Thus, the above theorem can be reinterpreted as follows:
\begin{theorem} \label{1.3} Let
 $L=\GL(r)$ and let $\lambda,\mu,\nu\in \mathfrak{P}_k(r)$ (for some $k\geq 1$)
 be such that
the intersection product
$$[\bar{\sigma}_\lambda]\cdot [\bar{\sigma}_\mu]\cdot [\bar{\sigma}_\nu]=
[\bar{\sigma}_{\lambda^o}] \,\,\text{in}\, H^*(\Gr(r, r+k), \Bbb Z),$$
where $\lambda^o:=(k\geq \dots\geq k)$ ($r$ copies of $k$). Then,
 $[V(n\lambda) \otimes V(n\mu) \otimes V(n\nu)]^{\SL(r)}=1$, for  every positive integer $n$.
\end{theorem}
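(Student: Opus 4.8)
The plan is to reduce Theorem~\ref{1.3} to Theorem~\ref{1.1} (equivalently, to the reformulation in terms of $\SL(r)$-invariants) by exploiting the classical dictionary between the cohomology of $\Gr(r,r+k)$ and the representation theory of $\GL(r)$. The hypothesis $[\bar\sigma_\lambda]\cdot[\bar\sigma_\mu]\cdot[\bar\sigma_\nu]=[\bar\sigma_{\lambda^o}]$ in $H^*(\Gr(r,r+k),\Bbb Z)$ is an equality in the top-degree component, since $\lambda^o=(k\geq\cdots\geq k)$ indexes the point class (the unique Schubert class of top codimension $rk=\dim_{\Bbb C}\Gr(r,r+k)$). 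Pairing against the fundamental class, this says precisely that the triple intersection number $\int_{\Gr(r,r+k)}[\bar\sigma_\lambda]\cdot[\bar\sigma_\mu]\cdot[\bar\sigma_\nu]=1$. By the classical identification (cf. [F$_2$, $\S9.4$]) of these structure constants with Littlewood--Richardson coefficients, this triple intersection number equals $\dim[V(\lambda)\otimes V(\mu)\otimes V(\nu^c)]^{\SL(r)}$, where $\nu^c$ denotes the complement of $\nu$ inside the $r\times k$ box, i.e. $\nu^c_i=k-\nu_{r+1-i}$; equivalently it equals $c^{\,\widehat{\nu}}_{\lambda,\mu}$ for the appropriate $\widehat\nu$.

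Concretely, I would argue as follows. First, translate the cohomological hypothesis: the coefficient of $[\bar\sigma_{\lambda^o}]$ in $[\bar\sigma_\lambda]\cdot[\bar\sigma_\mu]\cdot[\bar\sigma_\nu]$ is $\sum_{\eta}c^{\eta}_{\lambda,\mu}\,c^{\lambda^o}_{\eta,\nu}$, and since $c^{\lambda^o}_{\eta,\nu}$ (with $\lambda^o$ the full box) is $1$ when $\eta=\nu^c$ and $0$ otherwise, the hypothesis is equivalent to $c^{\nu^c}_{\lambda,\mu}=1$. Next, apply Theorem~\ref{1.1} to the partitions $\lambda,\mu,\nu^c\in\mathfrak P_k(r)$: since $c^{\nu^c}_{\lambda,\mu}=1$, we get $c^{\,n\nu^c}_{n\lambda,n\mu}=1$ for every positive integer $n$. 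Finally, reinterpret this back via the $\SL(r)$-invariant formulation: $c^{n\nu^c}_{n\lambda,n\mu}=1$ says $[V(n\lambda)\otimes V(n\mu)\otimes V((n\nu^c)^*)]^{\SL(r)}=1$, and one checks that $(n\nu^c)$ and $n\nu$ are dual partitions up to an overall power of the determinant (scaling by $n$ commutes with box-complementation up to enlarging the box to $r\times nk$, and tensoring by a power of $\det$ does not change the dimension of $\SL(r)$-invariants of a triple product whose "total determinant weight" is fixed). Hence $[V(n\lambda)\otimes V(n\mu)\otimes V(n\nu)]^{\SL(r)}=1$, as desired.

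The only genuinely delicate point is the bookkeeping in the last step: matching $n\nu$ with the dual of $n\nu^c$ and verifying that the relevant $\det$-twists are consistent so that the count of $\SL(r)$-invariants is literally preserved (not merely up to a nonzero scalar or a shift). This is the step where I would be most careful, writing out $(n\nu)^* $ explicitly as $\big(n\nu_1 - n\nu_r,\,\ldots,\,n\nu_1-n\nu_1\big)$ modulo $\det^{n\nu_1}$ and comparing with $n\nu^c$ computed in the $r\times nk$ box, using that $\lambda,\mu,\nu$ all lie in $\mathfrak P_k(r)$ so the scaled partitions lie in $\mathfrak P_{nk}(r)$ and no truncation occurs. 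Everything else is a direct invocation of the Grassmannian--Littlewood-Richardson dictionary and of Theorem~\ref{1.1}; no new geometry is needed for this particular statement, which serves mainly as the motivating reformulation for the general-group extension developed in the rest of the paper.
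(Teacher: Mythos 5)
Your proposal is correct and is exactly the route the paper takes: the paper presents Theorem~\ref{1.3} as an immediate reinterpretation of Theorem~\ref{1.1} via the classical identification of Schubert structure constants on $\Gr(r,r+k)$ with Littlewood--Richardson coefficients, together with the duality $[V(\lambda)\otimes V(\mu)\otimes V(\nu)]^{\SL(r)}=c^{\nu^c}_{\lambda,\mu}$ coming from box-complementation. Your careful verification that scaling by $n$ commutes with complementation (in the $r\times nk$ box, up to the determinant twist) is precisely the bookkeeping the paper leaves implicit, and it checks out.
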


\subsection{Generalization for arbitrary groups}

Our generalization of Fulton's conjecture to an arbitrary reductive group
is by considering its equivalent formulation
in Theorem ~\ref{1.3}. Moreover, the generalization replaces the intersection theory of
the  Grassmannians
 by the deformed product $\odot_0$ in the cohomology of $G/P$ introduced in ~\cite{Belkale-kumar}.
 The role of the representation theory of $\SL(r)$ is replaced by the
 representation theory of the semisimple part $L^{ss}$ of the Levi subgroup $L$ of $P$.

To be more precise, let $G$ be a
connected reductive complex algebraic group with a Borel subgroup
$B$ and a maximal torus $H\subset B$. Let $P\supseteq B$ be a (standard)
 parabolic subgroup of $G$. Let $L\supset H$ be the Levi subgroup of $P$,
 $B_L$ the Borel subgroup of $L$ and $L^{ss}=[L,L]$ the semisimple part of $L$.
Let $W$ be the Weyl group of $G$, $W_P$ the Weyl group of $P$, and
 let $W^P$ be the set of minimal
 length coset representatives in $W/W_P$.  For any $w\in W^P$, let $X_w$ be the
 corresponding Schubert
 variety and
$[X_w]\in H^{2(\dim G/P-\ell (w))}(G/P, \Bbb Z)$ the
corresponding Poincar\'e dual class (cf. Section 2).
Also, recall the definition of the deformed product $\odot_0$ in the singular cohomology
$H^*(G/P, \Bbb Z)$ from [BK, Definition 18]. The following is our main theorem
(cf. Theorem ~\ref{main}).

\begin{theorem} \label{1.4} Let $G$ be any connected reductive group and let $P$ be any
standard parabolic subgroup. Then, for any $w_1, \dots, w_s\in W^P$ such that
\beqn \label{e1}[X_{w_1}]\odot_0\dots \odot_0 [X_{w_s}]=[X_e]\in H^{*}(G/P),
\eeqn
we have, for every positive integer $n$,
\beqn \label{e2} \dim \bigl(\bigl[V_L(n\chi_{w_1})\otimes \dots \otimes
V_L(n\chi_{w_s})\bigr]^{L^{ss}}\bigr)=1,
\eeqn
where $V_L(\lambda)$ is the irreducible representation of $L$ with highest weight
 $\lambda$ and $\chi_w$ is defined by the identity ~\eqref{eqn5}.
\end{theorem}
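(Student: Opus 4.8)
The plan is to reduce the statement to a geometric invariant theory calculation on the cotangent-type objects attached to the deformed product, following the strategy of the geometric proofs of Fulton's conjecture (Belkale, Ressayre) but carried out intrinsically on $G/P$. First I would recall from [BK] the Levi-movability interpretation of the deformed product: the equality $[X_{w_1}]\odot_0\cdots\odot_0[X_{w_s}]=[X_e]$ holds precisely when the intersection number $\langle [X_{w_1}],\dots,[X_{w_s}]\rangle$ in ordinary cohomology equals $1$ \emph{and} the corresponding tuple of Schubert varieties is Levi-movable, i.e. there exist $l_1,\dots,l_s\in L$ such that the translated tangent spaces $T_{l_i\cdot p}(l_iX_{w_i})$ at the base point $p=eP$ meet transversally as $L$-subspaces of $T_p(G/P)$, with the sum of their codimensions equal to $\dim G/P$ and the $L$-module $T_p(G/P)$ split accordingly. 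This is the replacement for the classical fact that a Littlewood--Richardson coefficient $1$ forces the Schubert classes on a Grassmannian to intersect in a single reduced point.

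Next I would set up the invariant-theoretic side. For each $w_i$, the one-dimensionality of $[\otimes V_L(n\chi_{w_i})]^{L^{ss}}$ for all $n$ is equivalent to the statement that the point $(\chi_{w_1},\dots,\chi_{w_s})$ lies on a \emph{regular face} — indeed, is an extremal ray generator in the appropriate sense — of the eigencone / Littlewood--Richardson cone for $L^{ss}$, together with the value being exactly $1$ rather than $0$. The characters $\chi_w$ defined by \eqref{eqn5} are exactly the weights by which $H$ (or rather $L$) acts on the determinant of the normal space $T_p(G/P)/T_p(X_w)$, so the cohomological condition \eqref{e1} is designed to force $\sum_i n\chi_{w_i}$ to restrict trivially to the center of $L$ and to be $L^{ss}$-regular in the correct chamber. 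The heart of the matter is then: nonvanishing of the invariant space is a consequence of the nonvanishing intersection number (this is essentially the reductive group analogue of the transfer principle relating cohomology of $G/P$ to $L^{ss}$-invariants, as in [BK] and [Belkale--Kumar, Ressayre]); and one-dimensionality must be extracted from Levi-movability.

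For the one-dimensionality, the key step is a deformation/degeneration argument. Using the $l_i\in L$ furnished by Levi-movability, I would degenerate the multiplicative structure so that the relevant ring of invariants $\bigoplus_n [\otimes V_L(n\chi_{w_i})]^{L^{ss}}$ is sandwiched between two rings whose degree-$n$ pieces are provably one-dimensional: a lower bound coming from the nonvanishing intersection number (so each graded piece has dimension $\ge 1$), and an upper bound coming from a tangent-space / first-order-deformation computation at the unique transverse intersection point, exactly as in the $\GL(r)$ geometric proofs where the single reduced point of intersection of Schubert varieties produces a one-dimensional space of sections. Transversality as $L$-modules is what guarantees the upper bound survives passage to $L^{ss}$-invariants; this is where the deformed product $\odot_0$, as opposed to the ordinary cup product, is indispensable. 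One should also check the multiplicativity in $n$: since the bound is $\dim \le 1$ and $\dim \ge 1$ for $n=1$ already forces it, a semigroup/additivity argument (the graded pieces form a semigroup under tensor product, so dimension $1$ in degree $1$ propagates to all $n$, mirroring the classical deduction that $c^\nu_{\lambda\mu}=1 \Rightarrow c^{n\nu}_{n\lambda,n\mu}=1$ once saturation-type positivity is known) closes the loop.

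The main obstacle I expect is precisely the upper bound: showing that the space of $L^{ss}$-invariants in $\otimes V_L(n\chi_{w_i})$ has dimension \emph{at most} one. Nonvanishing (dimension $\ge 1$) should follow fairly formally from the nonzero intersection number together with the Borel--Weil / Bott-type description of sections of the relevant line bundle on $G/P$ restricted along Schubert varieties. But the sharp bound of $1$ genuinely uses that the deformed product equals $[X_e]$ and not merely that the ordinary intersection number is $1$; translating Levi-movability into a rigidity statement for the invariant space — controlling all higher tensor powers simultaneously, and doing so $L^{ss}$-equivariantly rather than just $\GL$-combinatorially — is the crux, and is where the paper must do real work beyond citing [KTW], [B$_2$], [R$_2$].
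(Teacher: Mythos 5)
Your proposal correctly identifies the entry point (by [BK], the hypothesis \eqref{e1} is equivalent to the ordinary intersection number being $1$ together with Levi-movability of the tuple) and correctly locates the difficulty in the upper bound $\dim\le 1$. But the mechanism that actually produces that upper bound is missing, and the one concrete closing step you do offer is invalid. You write that ``dimension $1$ in degree $1$ propagates to all $n$'' by a semigroup/additivity argument. The semigroup property of the graded ring $\bigoplus_n\bigl[V_L(n\chi_{w_1})\otimes\cdots\otimes V_L(n\chi_{w_s})\bigr]^{L^{ss}}$ only gives the \emph{lower} bound $\dim\ge 1$ for all $n$ once it holds for $n=1$; it gives no upper bound whatsoever. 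The implication ``$\dim=1$ in degree $1$ implies $\dim=1$ in degree $n$'' is precisely the content of Fulton's conjecture, is false for general $L$ (Example \ref{example}(3) in the paper, type $G_2$), and for $\GL(r)$ is the hard theorem of [KTW], not a formal consequence of saturation-type positivity (which moreover fails for general groups). So your argument is circular at exactly the point you flag as the crux.

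The paper's actual route, which you would need some version of, is: (i) form the universal intersection scheme $\mathcal{X}\to(G/B)^s$, which is birational when $d=1$, and prove the general rigidity statement (Proposition \ref{jan2}) that $h^0(\mathcal{O}(nR))=1$ for every $n$ for the ramification divisor $R$ of a birational map from a smooth variety compactifiable with codimension-two boundary --- this is what controls all $n$ simultaneously, via Zariski's main theorem, replacing your degeneration/sandwich; (ii) identify the $G$-invariant sections of $\mathcal{O}(nR)$ over the open locus $\mathcal{Y}$ built from the orbits $Y_w=Q_w\dot w$ of the stabilizer parabolics with the tensor-product invariants, via Borel--Weil and the [BK] determinant-of-normal-bundle computation (Proposition \ref{lemma4.2}); and (iii) --- the genuinely new technical input --- show that sections over $\mathcal{Y}$ extend to the smooth locus $\mathcal{Z}$, because the intersection is forced to be non-transverse at points lying over $Z_w\setminus Y_w$ (Proposition \ref{prop6.1}, resting on the codimension-one cell analysis of Theorem \ref{degree}). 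Steps (ii) and (iii), in particular the role of $Q_w$ and the boundary $Z_w\setminus Y_w$, do not appear in your proposal in any form, and without them there is no bridge from the geometric rigidity of the ramification divisor to the representation-theoretic statement \eqref{e2}.
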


\begin{remark} {\em Let $\mathcal{M}$ be the GIT quotient of
  $(L/B_L)^s$  by the diagonal action of $L^{ss}$ linearized by $\mathcal{L}(\chi_{w_1})
  \boxtimes \dots  \boxtimes \mathcal{L}(\chi_{w_s})$.
 Then, the conclusion of Theorem ~\ref{1.4} is equivalent to the rigidity
 statement that $\mathcal{M}=\text{point}$.  Theorem ~\ref{1.4} can therefore be interpeted as the statement ``multiplicity one in intersection theory leads to
 rigidity in representation theory". }
\end{remark}

Our proof builds upon and further develops the connection between the deformed product
$\odot_0$ and  the representation theory of the Levi subgroup as established in
[BK]. In loc. cit., for any $w\in W^P$,  the line bundle $\mathcal{L}_P(\chi_w)$ on $P/B_L$
 was constructed (see Section ~\ref{march1} for the definitions). Further, the following result
 was proved
 in there (cf. [BK, Corollary 8 and Theorem 15]).
 \begin{proposition}\label{1.6}
 Let $w_1, \dots, w_s\in W^P$ be such that
$$[X_{w_1}]\odot_0\dots \odot_0 [X_{w_s}]=d[X_e]\in H^{*}(G/P),\, \text{for some} \, d\neq 0.
$$ Then,
 $m:=\dim \bigl(H^0\bigl((L/B_L)^s, \bigl(\mathcal{L}_P(\chi_{w_1})\boxtimes \dots  \boxtimes
 \mathcal{L}_P(\chi_{w_s})\bigr)_{|(L/B_L)^s}
 \bigr)^{L^{ss}}\bigr)\neq 0$.
 \end{proposition}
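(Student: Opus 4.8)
\medskip
\noindent\emph{Proof plan.} This is [BK, Corollary~8 and Theorem~15]; the proposal is to reconstruct their argument. Write $\Theta$ for the line bundle $\bigl(\mathcal{L}_P(\chi_{w_1})\boxtimes\cdots\boxtimes\mathcal{L}_P(\chi_{w_s})\bigr)_{|(L/B_L)^s}$ on $(L/B_L)^s$, and let $\mathcal{M}$ be the corresponding GIT quotient as in the Remark.

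The first step is to make the statement purely representation-theoretic. By the construction of $\mathcal{L}_P(\chi_w)$ in [BK], its restriction to $L/B_L\hookrightarrow P/B_L$ is the $B_L$-equivariant line bundle on $L/B_L$ attached to the character $\chi_w$ (necessarily $L$-dominant, as is implicit in ~\eqref{eqn5}); so Borel--Weil gives $H^0(L/B_L,\mathcal{L}_P(\chi_w)_{|L/B_L})\cong V_L(\chi_w)^*$ (with the conventions of [BK]), and the K\"unneth formula yields
\[
H^0\bigl((L/B_L)^s,\Theta\bigr)^{L^{ss}}\cong\bigl[V_L(\chi_{w_1})^*\otimes\cdots\otimes V_L(\chi_{w_s})^*\bigr]^{L^{ss}}.
\]
Since $\dim W^{L^{ss}}=\dim (W^*)^{L^{ss}}$ for any finite-dimensional $L^{ss}$-module $W$, Proposition ~\ref{1.6} is equivalent to the assertion $\bigl[V_L(\chi_{w_1})\otimes\cdots\otimes V_L(\chi_{w_s})\bigr]^{L^{ss}}\neq 0$ --- equivalently, $\mathcal{M}\neq\emptyset$ \emph{and} the invariant can be found already in the first tensor power of $\Theta$. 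Note also that $d\neq 0$ forces the codimension identity $\sum_i(\dim G/P-\ell(w_i))=\dim G/P$, so $d[X_e]$ is a top-degree class.

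The second step is to read off what $d\neq 0$ says geometrically. By the definition of $\odot_0$ ([BK, Definition~18]) together with [BK, Theorem~15], the hypothesis is equivalent to the tuple $(w_1,\dots,w_s)$ being \emph{Levi-movable}: there exist $l_1,\dots,l_s\in L$ such that the $\ell(w_i)$-dimensional subspaces $l_i\cdot T_{w_i}\subset T_{eP}(G/P)$ --- where $T_w$ is the $L$-translate of a Schubert-cell tangent space that [BK] attach to $w$ --- are mutually transverse, $\bigcap_i l_i T_{w_i}=(0)$ with codimensions adding to $\dim G/P$, \emph{and} are moreover in general position for the infinitesimal $L$-action. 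The last clause is the extra content of the \emph{deformed} product over the ordinary cup product: it amounts to a linear constraint on the $\chi_{w_i}$ along the centre of $L$ together with a non-degeneracy (on tangent spaces) of the induced $L^{ss}$-action at the corresponding configuration. Thus the hypothesis of Proposition ~\ref{1.6} supplies such $l_1,\dots,l_s$.

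The third step --- the one carrying the real content --- is to convert a Levi-movable configuration into a nonzero $L^{ss}$-invariant of $V_L(\chi_{w_1})\otimes\cdots\otimes V_L(\chi_{w_s})$. The point $\bar x:=(\overline{l_1},\dots,\overline{l_s})\in(L/B_L)^s$ is GIT-semistable for $\Theta$: by the Hilbert--Mumford criterion it suffices to check that for every one-parameter subgroup $\tau$ of $L^{ss}$ the numerical invariant $\mu(\bar x,\tau)$ is $\geq 0$, and each such inequality is precisely the statement that the subspaces $l_i T_{w_i}$ stay transverse relative to the limit flag of $\tau$ --- exactly what Levi-movability gives. Moreover the infinitesimal $L$-genericity forces the stabilizer of $\bar x$ in $L^{ss}$ to be finite and to act trivially on the fibre of $\Theta$ at $\bar x$, so that pulling back, via the configuration $\bar x$, the canonical sections cut out by the Schubert varieties $\overline{Bw_iP/P}$ in the line bundles $\mathcal{L}_P(\chi_{w_i})$ and multiplying produces a well-defined $L^{ss}$-invariant section of $\Theta$ (the first power) that does not vanish at $\bar x$. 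Hence $m\neq 0$; this is [BK, Corollary~8].

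The step I expect to be the main obstacle is the third one. Producing a nonzero invariant section of \emph{some} tensor power of $\Theta$ is soft --- it is merely the non-emptiness of $\mathcal{M}$, i.e.\ the existence of a semistable point --- but the conclusion $m\neq 0$ is about the \emph{first} power, and for a general semisimple $L^{ss}$ there is no saturation theorem permitting one to descend from ``some power'' to ``the first power'' (saturation famously fails outside type $A$); this is, at bottom, why the direct generalization of Fulton's conjecture to arbitrary $G$ is false, cf.\ Example ~\ref{example}(3). What rescues the first power here is that the configuration is \emph{Levi-movable}, not merely semistable: the infinitesimal $L$-transversality pins down an explicit invariant section built from the Schubert varieties and the particular line bundles $\mathcal{L}_P(\chi_w)$, and forces the finite stabilizer to act trivially on the relevant fibre. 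Showing that Levi-movability --- and not merely non-vanishing of the ordinary intersection product --- produces an invariant of the first tensor power is the heart of the matter; the other ingredients (Borel--Weil, K\"unneth, Hilbert--Mumford, and the combinatorics of $\odot_0$) are comparatively routine.
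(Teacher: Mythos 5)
Your reduction steps are fine: Borel--Weil plus K\"unneth converts the assertion into $\bigl[V_L(\chi_{w_1})\otimes\cdots\otimes V_L(\chi_{w_s})\bigr]^{L^{ss}}\neq 0$, and the nonvanishing of the $\odot_0$-structure constant is indeed equivalent to Levi-movability of $(w_1,\dots,w_s)$ (this is [BK, Theorem 15 and Proposition 17]; note that $L$-movability is simply transversality of $\bigcap_i l_i\Lambda_{w_i}$ at $\dot e$ for generic $l_i\in L$ --- there is no separate ``extra infinitesimal genericity'' clause beyond restricting the translates from $G$ to $L$). The genuine gap is in your third step, which you yourself identify as the heart of the matter but do not actually carry out. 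Hilbert--Mumford semistability of the configuration $\bar x$ can only ever produce a nonzero invariant in $H^0((L/B_L)^s,\Theta^{\otimes N})^{L^{ss}}$ for \emph{some} $N$, and, as you note, no saturation is available to descend to $N=1$; so that route is a dead end. Your proposed rescue --- ``pulling back the canonical sections cut out by the Schubert varieties in $\mathcal{L}_P(\chi_{w_i})$ and multiplying'' --- does not define anything: $\mathcal{L}_P(\chi_{w_i})|_{L/B_L}$ has section space $V_L(\chi_{w_i})^*$ and carries no distinguished $L^{ss}$-invariant ``Schubert section'', and a finite stabilizer acting trivially on one fibre does not manufacture a global invariant section.

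The actual mechanism in [BK] (and the one re-used in Proposition~\ref{lemma4.2} of this paper) is a determinant construction. For $(\bar l_1,\dots,\bar l_s)\in(L/B_L)^s$ consider the canonical surjection $T^P\to\bigoplus_i T^P/T_{\dot e}(l_i\Lambda_{w_i})$ between spaces of equal dimension; its top exterior power is a section $\theta$ of the line bundle on $(L/B_L)^s$ with fibre $\wedge^{\text{top}}(T^P)^*\otimes\bigotimes_i\wedge^{\text{top}}\bigl(T^P/T^P_{w_i}\bigr)$. Since $\wedge^{\text{top}}(T^P/T^P_{w})$ is the $B_L$-character $\chi_w$ and $\wedge^{\text{top}}(T^P)$ is the character $\chi_1$, which is trivial on $L^{ss}$, this bundle is $L^{ss}$-equivariantly the \emph{first} tensor power $\Theta$; the section $\theta$ is tautologically invariant under the diagonal $L$-action, and it is not identically zero precisely because the tuple is Levi-movable. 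That single observation is the whole proof of [BK, Corollary 8], and it is what your write-up is missing.
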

 Note that, by the Borel-Weil theorem, for any $w\in W^P$,
 $H^0(L/B_L,\mathcal{L}_P(\chi_w)_{|(L/B_L)})=V_L(\chi_w)^*$.

The condition ~\eqref{e1} can be translated into the statement that
 a certain map of parameter spaces $X\to Y=(G/B)^s$ appearing in Kleiman's 
theorem is birational. Here $X$ is the
 ``universal intersection'' of Schubert varieties. It is well known that,
 for any birational
 map $X\to Y$ between smooth projective varieties,
 no multiple of the ramification divisor $R$ in $X$ can move even 
infinitesimally
 (i.e., the corresponding Hilbert scheme is reduced, and of dimension $0$ 
at $nR$ for every positive integer $n$).
 We may therefore conclude that $h^0(X,\mathcal{O}(nR))=1$ for every 
positive
 integer $n$. In our situation, $X$ is not smooth, and moreover 
$H^0(X,\mathcal{O}(nR))$ needs to
 be connected to the invariant theory. We overcome these difficulties by 
taking a closer
 look at the codimension one boundary of Schubert varieties.

 The proof also brings into focus the largest (standard) parabolic 
subgroup $Q_w$ acting on a Schubert variety $X_w\subseteq G/P $ (where 
$w\in W^P$), the open $Q_w$ orbit $Y_w\subseteq X_w$ and the smooth locus 
$Z_w\subseteq X_w$. The difference $X_w\setminus Z_w$ is of codimension at 
least two in $X_w$ (since $X_w$ is normal) and can effectively be ignored.

The varieties $Y_w$ give us the link to invariant theory (see Proposition 
~\ref{lemma4.2}). The difference $Z_w\setminus Y_w$ turns out to be quite 
subtle. A key result in the paper
 is that, in the setting of Proposition~\ref{lemma4.2},
the intersection  $\cap_{i}{g_i}Z_{w_i}$ of translates is {\em non-transverse} 
`essentially' at any point which lies in $\bigl(\cap_{i\neq 
j}{g_i}Z_{w_i}\bigr) \cap g_j(Z_{w_j}\setminus Y_{w_j})$ for some $j$ (cf. 
Proposition ~\ref{prop6.1} for a precise statement). This
 reveals the significance of $Q_w$ in the intersection theory of $G/P$ 
and, in particular, to the deformed product $\odot_0$. The ``complexity'' 
of the varieties $Z_w\setminus Y_w$ can therefore be expected to relate to 
the deformed product $\odot_0$.
 Note that by a result of Brion-Polo \cite{Brion-polo}, if $P$ is a 
cominuscule maximal parabolic subgroup, then
 $Y_w= Z_w$, and in this case the deformed cohomology product $\odot_0$ 
coincides with the standard intersection product as well (cf. [BK, Lemma 
19]).

As mentioned above,
 for any cominuscule flag variety $G/P$ (in particular, for the 
Grassmannians
 $\Gr(r, r+k)$),
 the deformed product $\odot_0$ in $H^*(G/P)$ coincides with the standard 
intersection product. In the case of $G=\GL(r+k)$ and $G/P= \Gr(r, r+k)$, 
the set 
$W^P$ can be identified with
 $\mathfrak{P}_k(r)$. For any $\lambda \in W^P$, the corresponding 
irreducible representation of the Levi subgroup $L=\GL(r)\times \GL(k)$ 
with the highest weight $\chi_\lambda$ coincides with 
$V(\lambda)^*\boxtimes V(\tilde{\lambda})$ (cf. [B$_1$]), where 
$V(\lambda)$ is the irreducible representation of $\GL(r)$ as defined in 
Section ~\ref{original} and $\tilde{\lambda}$ is the conjugate partition 
giving rise to the irreducible representation $V(\tilde{\lambda})$ of 
$\GL(k)$. Thus, if we specialize Theorem ~\ref{1.4} to $G=\GL(r+k)$, we 
get Theorem ~\ref{1.3}.

Observe that in the case $G=\GL(r+k)$ and $G/P=\Gr(r,r+k)$, under the 
assumption of Proposition ~\ref{1.6}, from the above discussion and the 
discussion in Section 1.1, we get
 the
 stronger relation $m=d^2$. In general, however, there are no known 
numerical relations between $m$ and $d$ (cf. Examples ~\ref{example}).

We remark that if we replace the condition ~\eqref{e1} in Theorem 
~\ref{1.4}
 by the standard cohomology product, then the conclusion of the theorem is 
false in general (see
 Example ~\ref{example}(4)). Also, the converse to Theorem ~\ref{1.4} is 
not true
 in general (cf.
 Example ~\ref{example}(1)).

\section{Notation} Let $G$ be a connected reductive complex algebraic 
group. We choose a Borel subgroup $B$ and a maximal torus $H\subset B$ and 
let $W=W_G:=N_G(H)/H$ be the associated Weyl group,
 where $N_G(H)$ is the normalizer of $H$ in $G$.  Let $P\supseteq B$ be a 
(standard)
 parabolic subgroup of $G$ and let $U=U_P$ be its unipotent radical. 
Consider the
 Levi subgroup $L=L_P$ of $P$ containing $H$, so that $P$ is the 
semi-direct product
 of $U$ and $L$. Then, $B_L:=B\cap L$ is a Borel subgroup of $L$. Let 
$X(H)$ denote the character group of $H$, i.e., the group of all the 
algebraic group morphisms $H \to \Bbb G_m$. Then, $B_L$ being the 
semidirect product of its commutator $[B_L,B_L]$ and $H$, any $\lambda \in 
X(H)$ extends uniquely to a character of $B_L$. We denote the Lie algebras 
of $G,B,H,P,U,L,B_L$ by the corresponding Gothic characters: 
$\fg,\fb,\fh,\fp,\fu,\fl,\fb_L$ respectively.  Let $R=R_\fg$ be the set of 
roots of $\fg$ with respect to the Cartan subalgebra $\fh$ and let $R^+$ 
be the set of positive roots (i.e., the set of roots of $\fb$). Similarly, 
let $R_\fl$ be the set of roots of $\fl$ with respect to $\fh$ and 
$R_\fl^+$ be the set of roots of $\fb_L$. Let $\Delta = \{\alpha_1, \dots, 
\alpha_\ell\} \subset R^+$ be the set of simple roots, where $\ell$ is the 
semisimple rank of $G$ (i.e., the dimension of $\fh':=\fh\cap [\fg,\fg]$). 
We denote by $\Delta(P)$ the set of simple roots contained in $R_\fl$.  
For any $ 1\leq j\leq \ell$, define the element $x_j\in \fh'$ by 
\begin{equation}\label{eqn0}\alpha_i(x_{j})=\delta_{i,j},\text{ 
}\forall\text{ } 1\leq i\leq \ell. \end{equation}

Recall that if $W_P$ is the Weyl group of $P$ (which is, by definition, 
the Weyl Group $W_L$ of $L$;
 thus $W_P:=W_L$), then in each coset of $W/W_P$ we have a unique member 
$w$ of minimal length.
 This satisfies (cf. [K, Exercise 1.3.E]): \begin{equation}\label{eqn1} 
wB_L w^{-1} \subseteq B. \end{equation} Let $W^P$ be the set of minimal 
length representatives in the cosets of $W/W_P$.

For any $w\in W^P$, define the Schubert cell: \[ C_w =C_w^P:= BwP/P 
\subset G/P.
  \] Then, it is a locally-closed subvariety of $G/P$ isomorphic to the 
affine space $\Bbb A^{\ell(w)}, \ell(w)$ being the length of $w$ (cf. [J, 
Part II, Chapter 13]). Its closure is denoted by $X_w=X^P_w$, which is an 
irreducible (projective) subvariety of $G/P$ of dimension $\ell(w)$. We 
denote the point $wP\in C_w$ by $\dot{w}$.

We also need the shifted Schubert cell: \[ \Lambda_w=\Lambda^P_w := w^{-1} 
BwP/P \subset G/P.
  \]

Let $\mu(X_w)$ denote the fundamental class of $X_w$ considered as an 
element of the singular homology with integral coefficients 
$H_{2\ell(w)}(G/P, \Bbb Z)$ of $G/P$. Then, from the Bruhat decomposition, 
the elements $\{\mu(X_w)\}_{w\in W^P}$ form a $\Bbb Z$-basis of
 $H_*(G/P, \Bbb Z)$. Let $\{[X_w]\}_{w\in W^P}$ be the Poincar\'e dual 
basis of the singular cohomology with integral coefficients $ H^*(G/P, 
\Bbb Z)$. Thus, $[X_w]\in H^{2(\dim G/P-\ell (w))}(G/P, \Bbb Z)$.

The tangent space $T^P=T_{\dot{e}}(G/P)$ of $G/P$ at $e\in G/P$ carries a
canonical  action of $P$ induced from the left multiplication of $P$ on $G/P$.

  We recall the following definition from [BK, Definition 4].

\begin{definition}\label{D1} Fix a positive integer $s\geq 1$.
Let $w_1, \dots, w_s\in W^P$ be such that
  \begin{equation}\label{dim0}
\sum_{j=1}^s\codim \Lam_{w_j}  = \dim G/P.
  \end{equation}
This of course is equivalent to the condition:
\begin{equation}\label{dim0'}
\sum_{j=1}^s\, \ell(w_j)
 = (s-1) \dim G/P.
 \end{equation}
We then call the $s$-tuple $(w_1, \dots, w_s)$ {\it Levi-movable} (for short $L$-{\it movable})
if, for generic $(l_1, \dots, l_s)\in L^s$, the intersection
$l_1\Lambda_{w_1}\cap \cdots \cap l_s\Lambda_{w_s}$ is transverse at $\dot{e}$.
\end{definition}

All the schemes are considered over the base field of complex numbers $\Bbb C$. The varieties
are reduced (but not necessarily irreducible) schemes.
\section{A crucial geometric result}

Let $\pi:X\to Y$ be a regular birational morphism of smooth irreducible varieties with $Y$
projective. Assume that we have a (not necessarily smooth) irreducible projective scheme
$\bar{X}$ containing $X$ as an open subscheme
such that \begin{enumerate} \item the codimension of each irreducible component of
$\bar{X}\setminus X$ in $\bar{X}$ is at least two, \item $\pi$ extends to a regular map
$\bar{\pi}:\bar{X}\to Y$. \end{enumerate}
Let $R$ be the {\it ramification divisor} of $\pi$ in $X$.
It is, by definition, the effective Cartier divisor obtained as the zero scheme of the section of the line bundle $\mathfrak{L}$
 induced by the derivative map $D\pi_x:T_x(X)\to T_{\pi(x)}(Y),$
where the line bundle $\mathfrak{L}$ has base $X$ and fiber $\mathfrak{L}_x$ at any $x\in X$
is given by:
$$\mathfrak{L}_x= \wedge^{\text{top}}(T_x(X)^*)\otimes
\wedge^{\text{top}}(T_{\pi(x)}(Y)).$$
In
the above set up, one has the following crucial result.

\begin{proposition}\label{jan2} For every $n\geq
1$, $h^0(X,\mathcal{O}(nR))=1$,
where $h^0$ denotes the dimension of $H^0$. \end{proposition}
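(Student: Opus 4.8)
The plan is to exploit the fact that $\pi\colon X\to Y$ is birational with $Y$ projective, so that $\pi$ is an isomorphism over a dense open subset $V\subseteq Y$ whose complement has codimension $\geq 1$, and the exceptional locus $E\subseteq X$ maps to $Y\setminus V$. First I would show that every global section $s\in H^0(X,\mathcal{O}(nR))$ extends to the projective scheme $\bar X$. Since $\bar X\setminus X$ has codimension $\geq 2$ in $\bar X$ and $\bar X$ is normal in codimension one along $\bar X\setminus X$ (here one must be a little careful, since $\bar X$ need not be normal; but the relevant point is that $X$ is smooth, hence $\mathcal{O}(nR)$ is a line bundle on the \emph{smooth} locus, and a section of a line bundle on the complement of a codimension $\geq 2$ closed subset of a variety that is, say, $S_2$, extends), the section $s$ extends to a section $\bar s$ of a coherent extension of $\mathcal{O}(nR)$ over $\bar X$. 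The cleanest route is: pull everything back along a resolution or simply work on $X$ itself and use that $H^0(X,\mathcal{F})\cong H^0(\bar X, j_*\mathcal{F})$ for $j\colon X\hookrightarrow \bar X$ the inclusion, because $\mathrm{codim}_{\bar X}(\bar X\setminus X)\geq 2$ combined with $\bar X$ irreducible projective forces $j_*\mathcal{O}_X=\mathcal{O}_{\bar X}$ up to the relevant torsion-free considerations.

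Next, the heart of the argument: I claim $H^0(X,\mathcal{O}(nR))$ is one-dimensional because a nonzero section of $\mathcal{O}(nR)$ must be (a scalar multiple of) the $n$-th power of the canonical section $\sigma_R$ cutting out $R$. To see this, consider the divisor $D=\mathrm{div}(s)$ of a nonzero section $s\in H^0(X,\mathcal{O}(nR))$; it is an effective divisor linearly equivalent to $nR$, and $D$ is supported in $R\cup(\text{stuff})$. The key input is that $R$ is precisely the ramification divisor of the birational morphism $\pi$, so $R$ is supported on the exceptional locus $E$ of $\pi$, i.e., on $\pi^{-1}(Y\setminus V)$. Since $\pi$ is birational and $X,Y$ are smooth, $\mathcal{O}(R)\cong \pi^*K_Y^{-1}\otimes K_X$ (the relative canonical bundle $\omega_{X/Y}$), and the line bundle $\mathfrak L$ in the statement is exactly this. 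Now push forward: $\pi_*\mathcal{O}(nR)=\pi_*\omega_{X/Y}^{\otimes n}$, and because $\pi$ is birational between smooth varieties, $\pi_*\omega_{X/Y}^{\otimes n}=\mathcal{O}_Y$ (this is the standard fact that for a proper birational morphism of smooth varieties the pushforward of powers of the relative canonical sheaf is the structure sheaf — it follows from $\pi_*\omega_{X}=\omega_Y$ and the projection formula, or more simply since $\pi$ is an isomorphism in codimension one on $Y$ and $\pi_*\omega_{X/Y}^{\otimes n}$ is reflexive, hence determined in codimension one where it is $\mathcal{O}_Y$). Therefore $H^0(X,\mathcal{O}(nR))=H^0(Y,\pi_*\mathcal{O}(nR))=H^0(Y,\mathcal{O}_Y)=\mathbb{C}$, the last equality because $Y$ is projective and irreducible (hence connected and reduced).

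The step I expect to be the main obstacle is the pushforward identity $\pi_*\mathcal{O}_X(nR)=\mathcal{O}_Y$, or rather making it airtight given only the hypotheses at hand. The clean version requires knowing that $\pi$ is proper (which holds: $\bar\pi\colon\bar X\to Y$ is proper since $\bar X$ is projective, and $X=\bar\pi^{-1}(V')$ for a suitable open, or one argues $\pi$ is proper because it is the restriction of a proper map to a saturated open); and it requires the vanishing $R^{0}$-statement that $\pi_*\omega_{X/Y}^{\otimes n}$ has no sections beyond the constants. If one wishes to avoid invoking Grothendieck duality, an alternative is: a section $s$ of $\mathcal{O}(nR)$ restricts, over the isomorphism locus $V$ (where $R=0$), to a section of $\mathcal{O}_V$, hence to a constant $c$; then $s-c\sigma_R^{n}$ vanishes on the dense open $\pi^{-1}(V)$ and therefore vanishes identically on the irreducible variety $X$. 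Here $\sigma_R$ is the tautological section of $\mathcal{O}(R)=\mathfrak L$ whose zero scheme is $R$, and the only thing to check is that $\sigma_R$ restricted to $\pi^{-1}(V)$ is a nowhere-vanishing section, i.e., that $R\cap\pi^{-1}(V)=\emptyset$ — which is immediate since $\pi$ is étale (indeed an isomorphism) over $V$, so $D\pi_x$ is an isomorphism for $x\in\pi^{-1}(V)$ and hence the defining section of $\mathfrak L$ is nonzero there. This second approach is entirely elementary and I would use it as the primary argument, relegating the cohomological pushforward remark to a parenthetical. The extension-to-$\bar X$ step then enters only to justify that $\pi^{-1}(V)$ is dense in $X$ (equivalently, that $X$ is irreducible, which is given) and that no spurious sections are lost — but in fact for this elementary argument the scheme $\bar X$ and conditions (1)–(2) are not even needed; they are presumably there for the application where one identifies $R$ with a boundary divisor on a Schubert-type variety, not for the proof of this proposition per se.
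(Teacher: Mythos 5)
There is a genuine gap, and it sits exactly where you locate ``the main obstacle.'' Your primary (``elementary'') argument says: a section $s$ of $\mathcal{O}(nR)$ restricts over the isomorphism locus $V\subseteq Y$ to a section of $\mathcal{O}_V$, ``hence to a constant $c$.'' That inference requires $H^0(V,\mathcal{O}_V)=\Bbb C$, which holds only if $Y\setminus V$ has codimension at least \emph{two} in the projective variety $Y$; you only assert codimension $\geq 1$, and with a divisorial complement $H^0(V,\mathcal{O}_V)$ is typically infinite-dimensional (e.g.\ $V=\Bbb A^1\subset\Bbb P^1$). Establishing this codimension-two bound is the entire content of the paper's proof: it shows $Y\setminus \pi(X\setminus R)\subseteq \bar{\pi}(\bar{X}\setminus X)\cup\pi(R)$, bounds the first piece by hypothesis (1), and bounds $\pi(R)$ by observing that the birational inverse $Y\dashrightarrow\bar{X}$ is defined off a closed set $Z$ of codimension $\geq 2$ (as $Y$ is smooth and $\bar{X}$ projective), that $\bar{\pi}$ is an isomorphism over $Y\setminus Z$, and hence that $R\subseteq\pi^{-1}(Z)$. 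Your closing claim that ``the scheme $\bar{X}$ and conditions (1)--(2) are not even needed'' is therefore not just unproved but false: take $Y=\Bbb P^2$ and $X=\mathrm{Bl}_p\Bbb P^2\setminus\tilde{L}$, where $\tilde{L}$ is the strict transform of a line through $p$. Then $\pi$ is a regular birational morphism of smooth irreducible varieties with $Y$ projective, $R=E\cap X$, and $H^0(X,\mathcal{O}(nR))=\varinjlim_m H^0(\mathrm{Bl}_p\Bbb P^2, mH+(n-m)E)$ is infinite-dimensional. The only available compactification has divisorial boundary, violating hypothesis (1).

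Your alternative route via $\pi_*\omega_{X/Y}^{\otimes n}=\mathcal{O}_Y$ has the same problem in a different guise: it needs $\pi$ proper, and $\pi$ is \emph{not} proper here ($X$ is merely an open subscheme of the projective $\bar{X}$, not the preimage of an open subset of $Y$; in the intended application $\mathcal{Z}\subset\mathcal{X}$ is the smooth locus, which is not saturated over $(G/B)^s$). The preliminary paragraph about extending sections to $\bar{X}$ also does not help, since $\bar{X}$ need not be normal and, more importantly, even a bound $h^0(\bar X,\cdot)<\infty$ would not give $h^0=1$. To repair the proof you should work on the $Y$ side as the paper does: use Zariski's main theorem to see that $\pi|_{X\setminus R}$ is an open immersion, prove $\mathrm{codim}_Y\bigl(Y\setminus\pi(X\setminus R)\bigr)\geq 2$ using both hypotheses on $\bar{X}$, and then conclude $H^0(X,\mathcal{O}(nR))\subseteq H^0(X\setminus R,\mathcal{O})=H^0(Y,\mathcal{O})=\Bbb C$.
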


\begin{proof} Clearly
 $\pi_{|X\setminus R}: X\setminus R\to Y$ is an \'{e}tale (and hence quasi-finite)
 birational morphism between
smooth varieties. Hence, by the original form of Zariski's main theorem [M, Chap. III, $\S$9],
it is an open
immersion, i.e., $\pi(X\setminus R)$ is open in $Y$ and $\pi: X\setminus R \to \pi(X\setminus R)$
is an isomorphism. We
will show that $V:=Y\setminus\pi(X\setminus R)$ is of codimension at
least two in $Y$.  This will then imply that
$$H^0(X,\mathcal{O}(nR))\subseteq H^0(X\setminus R,\mathcal{O})=
H^0(Y\setminus V,\mathcal{O})=H^0(Y,\mathcal{O})=\Bbb{C}.$$

 Since $\bar{\pi}$ is surjective, a point $v\in V$ is either in $\bar{\pi}(\bar{X}\setminus X)$, or in
$\pi(R)$, i.e., $V\subseteq \bar{\pi}(\bar{X}\setminus X)\cup \pi(R)$. We show that
$\overline{\pi(R)}$ is of codimension at least two in $Y$ and thus conclude the proof
(by assumption (1)).

To do this let $Z$ be the smallest closed subset of $Y$ so that there exists a morphism
$\sigma: Y\setminus Z\to \bar{X}$  representing the birational inverse to  $\bar{\pi}$.
It is known that the codimension of $Z$ in $Y$ is at least two
(follow [H, Proof of Theorem 8.19 on page 181]). Clearly, $\bar{\pi}\circ \sigma
= I$ on $Y\setminus Z$ and similarly $\sigma\circ\bar{\pi}$ is identity on $\bar{\pi}^{-1}(Y\setminus Z)$ (for the last, note that
 $\sigma\circ\bar{\pi}$ is well defined as a morphism $\bar{\pi}^{-1}(Y\setminus Z)\to \bar{X}$ which
 on an open subset is the identity).
 We therefore find that $\bar{\pi}:\bar{\pi}^{-1}(Y\setminus Z)\to Y\setminus Z$ is an isomorphism.

This tells us that $\bar{\pi}^{-1}(Y\setminus Z)$ is smooth and $\bar{\pi}^{-1}(Y\setminus Z)
\cap R=\emptyset$.
 Hence, $R$ is a subset of $\pi^{-1}(Z)$, or that $\pi(R)\subseteq Z$.
\end{proof}

\section{Some remarks on ramification divisors}\label{motor}

 Consider a linear map $p:V\to W$ between vector spaces of the same dimension.
Let $$\operatorname{Det}(p):=(\wedge^{\text{top}}V)^*\otimes \wedge^{\text{top}}W
={\rm Hom}(\wedge^{\text{top}}V,\wedge^{\text{top}}W).$$
Denote by $\theta(p)$ the canonical element of $\operatorname{Det}(p)$ induced by $p$, i.e.,
$\theta(p)$ is the top exterior power of $p$.
The following lemma is immediate.
\begin{lemma}\label{noise}
Let $p:V\to W$ be as above and $\alpha:W'\to W$ a surjective map. Let $V'\subset V\oplus W'$
consist of $(v,w')$ such that $p(v)=\alpha(w')$ (i.e., $V'$ is the fiber product of
$p$ and $\alpha$).
Let $p':V'\to W'$ be the projection. Then, the kernel of $p'$ is identified
 with the kernel of $p$ via the surjective projection $\pi:V'\to V$. Further, there is a canonical
  isomorphism of the vector spaces $\operatorname{Det}(p)$
 and $\operatorname{Det}(p')$ (defined below), which
 carries $\theta(p)$ to $\theta(p')$. (Observe that $V'$ and $W'$ have the same dimension.)

 Hence, for any fiber diagram of irreducible smooth varieties:
 $$\xymatrix{ X'\ar[d]^{\pi'}\ar[r]^{\hat{f}} & X\ar[d]^\pi\\
   Y'\ar[r]^f & Y,  } $$
   where $f$ is a smooth morphism and $X,Y$ are of the same dimension with $\pi$ a
   dominant morphism, we
   have the following identity between the ramification divisors:
   \beqn \label{4.1} \hat{f}^*(R(\pi))=R(\pi').
   \eeqn
\end{lemma}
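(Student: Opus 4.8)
The plan is to first establish the linear-algebra statement, then bootstrap to the geometric conclusion by working fiberwise. For the linear algebra, I would start from the exact sequence $0\to V'\to V\oplus W'\to W\to 0$, where the last map is $(v,w')\mapsto p(v)-\alpha(w')$ (surjective since $\alpha$ is). Restricting the projection $\pi\colon V\oplus W'\to V$ to $V'$ realizes $\ker p'$ as exactly the set of $(v,0)$ with $p(v)=0$, and since $\alpha$ is surjective every $v$ with $p(v)=0$ lifts (to $(v,0)$), so $\pi$ maps $\ker p'$ isomorphically onto $\ker p$; this is the first assertion. For the determinant comparison, I would take a splitting $W'=\ker\alpha\oplus W''$ with $\alpha$ an isomorphism on $W''$, and correspondingly $V'\cong \ker\alpha\oplus \widetilde V$ where $\widetilde V$ is the graph of $(\alpha|_{W''})^{-1}\circ p\colon V\to W''$, hence $\widetilde V\xrightarrow{\sim}V$ via $\pi$ and $p'|_{\ker\alpha}=\mathrm{id}_{\ker\alpha}$. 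Taking top exterior powers, $\wedge^{\mathrm{top}}V'\cong \wedge^{\mathrm{top}}(\ker\alpha)\otimes\wedge^{\mathrm{top}}V$ and likewise $\wedge^{\mathrm{top}}W'\cong\wedge^{\mathrm{top}}(\ker\alpha)\otimes\wedge^{\mathrm{top}}W$, and the map $p'=\mathrm{id}\oplus(\text{conjugate of }p)$ shows $\theta(p')$ corresponds to $\theta(p)$ under the induced isomorphism $\operatorname{Det}(p)\cong\operatorname{Det}(p')$. One should check this isomorphism is independent of the chosen splitting; this is the point requiring a little care, and I expect it to be the main (though mild) obstacle. The cleanest fix is to define $\operatorname{Det}(p')\to\operatorname{Det}(p)$ intrinsically via the canonical isomorphism $\wedge^{\mathrm{top}}V'\cong \wedge^{\mathrm{top}}(\ker p'/\ker(\pi|_{V'}))^{\dots}$—more precisely, via the filtration of $V'$ with graded pieces $\ker\alpha$ and $V$ (the latter identified via $\pi$), and dually for $W'$ with pieces $\ker\alpha$ and $W$; naturality of $\wedge^{\mathrm{top}}$ for short exact sequences then gives a canonical isomorphism and the cancellation of the common factor $\wedge^{\mathrm{top}}(\ker\alpha)$ is canonical.

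For the geometric statement, I would argue pointwise. Fix $x'\in X'$, let $x=\hat f(x')$, $y'=\pi'(x')$, $y=f(y')=\pi(x)$. Since $f$ is smooth, the square being a fiber square gives $T_{x'}X'=T_xX\times_{T_yY}T_{y'}Y'$ (the fiber product of $D\pi_x$ and $Df_{y'}$), and $Df_{y'}\colon T_{y'}Y'\to T_yY$ is surjective. We are in the situation of the linear algebra part with $V=T_xX$, $W=T_yY$, $W'=T_{y'}Y'$, $p=D\pi_x$, $\alpha=Df_{y'}$, and $V'=T_{x'}X'$, $p'=D\pi'_{x'}$. Hence $\theta(D\pi_x)$ and $\theta(D\pi'_{x'})$ correspond under the canonical isomorphism $\mathfrak L(\pi)_x\cong\mathfrak L(\pi')_{x'}$, where I am writing $\mathfrak L(\pi)$, $\mathfrak L(\pi')$ for the two determinant line bundles as in Section \ref{motor}. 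These fiberwise isomorphisms assemble into a global isomorphism of line bundles $\hat f^*\mathfrak L(\pi)\cong\mathfrak L(\pi')$ on $X'$ carrying the defining section of $R(\pi)$, pulled back via $\hat f$, to the defining section of $R(\pi')$; the vanishing loci therefore agree as Cartier divisors, giving $\hat f^*(R(\pi))=R(\pi')$. The one thing to note is that $\pi'$ is dominant (indeed its generic rank is full, since over the open locus where $D\pi$ is an isomorphism the fiber-square formula makes $D\pi'$ an isomorphism too), so $R(\pi')$ is a genuine effective Cartier divisor and the statement makes sense; this follows from the hypothesis that $\pi$ is dominant together with smoothness of $f$.
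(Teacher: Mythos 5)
Your proposal is correct and follows essentially the same route as the paper, which simply declares the linear-algebra statement immediate and records the explicit basis-independent formula for the isomorphism $\xi:\Det(p)\to\Det(p')$ (using a basis adapted to $\Ker(\pi)$ and a section of $\wedge^{n-d}(W')\to\wedge^{n-d}(W)$); your splitting/filtration construction is the same canonical isomorphism, and your fiberwise assembly of the geometric statement is exactly what the paper leaves implicit. No gaps.
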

The isomorphism $\xi:\Det(p) \to \Det(p')$ is given by:
$$\xi(\theta) (e_1\wedge\dots \wedge e_d\wedge e_{d+1}\wedge \dots \wedge e_n)=
p'(e_1)\wedge \dots \wedge p'(e_d)\wedge \bar{\theta}(\pi(e_{d+1})\wedge \dots \wedge \pi(e_n)),$$
for any $\theta \in \Det(p)=\Hom(\wedge^{\text{top}}V,\wedge^{\text{top}}W),$
where $\{e_1, \dots, e_n\}$ is any basis of $V'$ such that $\{e_1, \dots, e_d\}$ is a
basis of $\text{Ker} (\pi)$ and $\bar{\theta}:=\sigma\circ \theta
$ ($\sigma$ being any section of the map $\wedge^{n-d}(W') \to
\wedge^{n-d}(W)$ induced from $\alpha$). It is easy to see that $\xi$ does not depend upon the choice of the basis
and the section $\sigma$.

\bigskip
Let $X$ be an irreducible smooth variety and $Y_1,\dots, Y_s$ irreducible smooth
locally-closed subvarieties of $X$. Assume that $X$ has a transitive action by a connected
linear algebraic group $G$ and let $G_i$ be algebraic subgroups which keep $Y_i$ stable. Assume
further that $\sum_{i=1}^s\codim(Y_i) =\dim X$.
Let $\Y_i=G\times_{G_i}\, Y_i$ be the total space of the fiber bundle with fiber $Y_i$
associated to the principal $G_i$-bundle $G \to G/G_i$. Then, we have the  morphism $m_i:\Y_i\to X, \,
[g,y_i]\mapsto gy_i$,
where $[g,y_i]$ denotes the equivalence class of
 $(g,y_i)\in G\times Y_i$. Since $Y_i$ is smooth and $G$ acts
 transitively on $X$, by the $G$-equivariance, $m_i$ is a
 smooth morphism (cf. [H, Corollary 10.7, Chap. III]).
Taking their Cartesian product, we get the smooth morphism $m:
\Y_1\times \dots \times \Y_s \to X^s.$ Let $\my$ be the fiber product of $m$ with
 the diagonal map $\delta:X\to X^s$. We get a smooth morphism $\hat{m}:\my\to X$
 by restricting $m$ to
 $\my$. Hence, $\my$ is a smooth and irreducible variety (cf. the proof of Lemma
 ~\ref{lemma3.1}). We also have the morphism $\pi:\my \to G/G_1\times \dots \times G/G_s$
 obtained coordinatewise from the canonical projections $\pi_i:\Y_i\to G/G_i$. For any
 $g_i\in G$ and $y_i\in Y_i$, the map $e_{y_i}:G \to X, \, g \mapsto gy_i,$ induces
 the tangent map $\Psi_{(g_i,y_i)}:T_{g_i}(G)\to T_{g_iy_i}(X).$ Since $Y_i$ is
 $G_i$-stable, this map induces the map $\bar{\Psi}_{(g_i,y_i)}:T_{\bar{g}_i}(G/G_i)
 \to T_{g_iy_i}(X)/ T_{g_iy_i}(g_iY_i),$ where $\bar{g}_i=g_iG_i$. Moreover,
for any $h_i\in G_i$,
\beqn \bar{\Psi}_{(g_i,y_i)}=\bar{\Psi}_{(g_ih_i,h_i^{-1}y_i)}.
\eeqn
To see this, observe that the following diagram is commutative for any
$g_i\in G$ and $h_i\in G_i$.

$$
\xymatrix{
T_{g_i}(G) \ar@{->}[dr]\ar@{->}[rr]^{DR_{h_i}}  & &T_{g_ih_i}(G) \ar@{->}[dl]\\
 & T_{\bar{g}_i}(G/G_i),
  }$$
 where $R_{h_i}: G \to G$ is the right multiplication by $h_i$. Thus,
 $\bar{\Psi}_{(g_i,y_i)}$ depends only upon the equivalence class
 $[g_i,y_i]\in G\times_{G_i}\,Y_i$ and we denote $\bar{\Psi}_{(g_i,y_i)}$ by
 ${\Psi}_{[g_i,y_i]}$. Since $G$ acts transitively on $X$,
 ${\Psi}_{[g_i,y_i]}$ is surjective.

For any $\mathfrak{a}=([g_1,y_1], \dots, [g_s,y_s])\in \my$, we have the following
diagram (for $x=\hat{m}(\mathfrak{a})$):
\beqn \xymatrix {
T_{\mathfrak a}\my \ar[r]^{\hspace {-6em }\pi_{\mathfrak a}}\ar[d]_{D\hat{m}}
& T_{\bar{g}_1}(G/G_1)\oplus \dots
 \oplus T_{\bar{g}_s}(G/G_s) \ar@{->}[d]\\
 T_xX \ar@{->}[r]&\bigoplus_{i=1}^s \frac{T_xX}{T_x(g_iY_i)},
}  \eeqn
   where $\bar{g}_i:=g_iG_i$, the bottom
   horizontal map is the canonical projection in each factor, $D\hat{m}$
    is surjective since $\hat{m}$ is a smooth morphism
   and the right vertical map is the coordinatewise surjective map ${\Psi}_{[g_i,y_i]}$.
\begin{lemma} \label{n4.2} The above diagram is commutative. In fact,
  $T_{\mathfrak a}(\my)$ is the fiber product of $T_x(X)$ and
   $T_{\bar{g}_1}(G/G_1)\oplus \dots
 \oplus T_{\bar{g}_s}(G/G_s)$ via the above diagram.
 \end{lemma}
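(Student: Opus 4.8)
The plan is to verify the two assertions of the lemma by a direct, functorial tangent-space computation, using the explicit description of $\my$ as an iterated fiber product. Recall that $\my$ was built in two stages: first $\Y_1\times\dots\times\Y_s$ maps to $X^s$ via the smooth morphism $m$, and then $\my$ is the fiber product of $m$ with the diagonal $\delta\colon X\to X^s$. So $\my=\{(y_1,\dots,y_s)\in\Y_1\times\dots\times\Y_s : m_1(y_1)=\dots=m_s(y_s)\}$, where $y_i=[g_i,y_i]$ and $m_i(y_i)=g_iy_i$. Consequently, for $\mathfrak a=([g_1,y_1],\dots,[g_s,y_s])$ with common image $x$, the tangent space $T_{\mathfrak a}(\my)$ is the fiber product of the $T_{y_i}(\Y_i)$ over $\bigoplus_i T_x(X)$ identified along the diagonal — that is, $T_{\mathfrak a}(\my)=\{(v_1,\dots,v_s) : v_i\in T_{[g_i,y_i]}(\Y_i),\ Dm_i(v_i)\text{ all equal in }T_x(X)\}$. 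This is the starting point.

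The second step is to record, for each $i$ separately, the structure of the tangent space of $\Y_i=G\times_{G_i}Y_i$ at $[g_i,y_i]$ together with the two maps out of it: the projection $\pi_i\colon\Y_i\to G/G_i$ and the evaluation $m_i\colon\Y_i\to X$. I claim that $T_{[g_i,y_i]}(\Y_i)$ is itself the fiber product of $T_{\bar g_i}(G/G_i)$ and $T_x(X)$ along the surjection $\Psi_{[g_i,y_i]}\colon T_{\bar g_i}(G/G_i)\to T_x(X)/T_x(g_iY_i)$ and the canonical quotient $T_x(X)\to T_x(X)/T_x(g_iY_i)$. Indeed, $T_{[g_i,y_i]}(\Y_i)$ sits in the standard exact sequence for an associated bundle: it surjects onto $T_{\bar g_i}(G/G_i)$ with kernel $T_{y_i}(g_iY_i)$ (the vertical tangent space), and $Dm_i$ maps this kernel isomorphically onto $T_x(g_iY_i)\subseteq T_x(X)$. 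Chasing the definition of $\Psi_{[g_i,y_i]}$ — which was precisely defined so that the triangle relating $T_{g_i}(G)$, $T_{\bar g_i}(G/G_i)$ and $T_x(X)/T_x(g_iY_i)$ commutes — gives exactly this fiber-product presentation of $T_{[g_i,y_i]}(\Y_i)$, compatibly with $\pi_i$ and $m_i$. The commutativity of the displayed diagram in the lemma is then immediate from this compatibility.

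For the ``in fact'' clause, I would assemble the pieces: $T_{\mathfrak a}(\my)$ is the fiber product over $i$ (along the common-image-in-$T_x(X)$ condition) of spaces each of which is itself the fiber product of $T_{\bar g_i}(G/G_i)$ and $T_x(X)$ over $T_x(X)/T_x(g_iY_i)$. A formal manipulation of iterated fiber products — ``fiber product of fiber products, with the $T_x(X)$-factors glued'' — collapses this to: $T_{\mathfrak a}(\my)=\{(\xi_1,\dots,\xi_s,v) : v\in T_x(X),\ \xi_i\in T_{\bar g_i}(G/G_i),\ \Psi_{[g_i,y_i]}(\xi_i)=\bar v_i\text{ in }T_x(X)/T_x(g_iY_i)\ \forall i\}$, where $\bar v_i$ is the image of $v$. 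This is precisely the fiber product of $T_x(X)$ with $\bigoplus_i T_{\bar g_i}(G/G_i)$ along $D\hat m$ and the coordinatewise map $(\Psi_{[g_i,y_i]})$ down to $\bigoplus_i T_x(X)/T_x(g_iY_i)$, as claimed.

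The main obstacle I anticipate is being careful about the $G_i$-equivalence: the description of $T_{[g_i,y_i]}(\Y_i)$ as a fiber product must be checked to be independent of the representative $(g_i,y_i)$ of the class $[g_i,y_i]$, and must be compatible with the identifications already made in the text (the $R_{h_i}$-diagram and the passage from $\bar\Psi_{(g_i,y_i)}$ to $\Psi_{[g_i,y_i]}$). Once one trusts those identifications — which are established just before the lemma — the argument is a diagram chase with no real content beyond bookkeeping of exact sequences; the surjectivity of $D\hat m$ (from smoothness of $\hat m$) and of each $\Psi_{[g_i,y_i]}$ (from transitivity of $G$ on $X$) guarantee that all the fiber products in sight behave as expected with respect to dimensions, so no transversality subtlety arises.
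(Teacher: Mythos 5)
Your proposal is correct, and it takes a genuinely different route from the paper. The paper's proof is short: it asserts commutativity of the diagram, observes that $T_{\mathfrak a}(\my)$ is contained in the fiber product $F$ (since $y_i=g_i^{-1}x$), and then forces equality by a dimension count, computing $\dim\my=\dim X+\sum_i(\dim G/G_i-\codim Y_i)$ and matching it against $\dim F$. You instead identify the tangent space structurally: first you present each $T_{[g_i,y_i]}(\Y_i)$ as the fiber product of $T_{\bar g_i}(G/G_i)$ and $T_x(X)$ over $T_x(X)/T_x(g_iY_i)$, using the exact sequence of the associated bundle (vertical tangent space mapped isomorphically onto $T_x(g_iY_i)$ by $Dm_i$, and $D\pi_i$ surjective), and then collapse the iterated fiber product formally. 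Both arguments are sound. Your version is longer but more transparent: the commutativity of the square and the identification drop out of the same computation (differentiating $t\mapsto g(t)y(t)$ and reducing modulo $T_x(g_iY_i)$), and no dimension count is needed, the surjectivity of $D\hat m$ and of the $\Psi_{[g_i,y_i]}$ being used only to see that the fiber products are well behaved. The paper's version is more economical but leaves the containment $T_{\mathfrak a}(\my)\subseteq F$ and the commutativity as ``easy to see.'' Your worry about independence of the representative $(g_i,y_i)$ is already handled by the discussion preceding the lemma (the $DR_{h_i}$ triangle), so nothing further is required there.
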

 \begin{proof}  Let $F$ be the fiber product of  $T_x(X)$ and
   $T_{\bar{g}_1}(G/G_1)\oplus \dots
 \oplus T_{\bar{g}_s}(G/G_s)$. It is easy to see that the above diagram is commutative.
 Moreover, since $y_i=g_i^{-1}x$ for any $\mathfrak{a}=([g_1,y_1], \dots, [g_s,y_s])\in \my$
 with $\hat{m}(\mathfrak{a})=x, T_{\mathfrak a}(\my)$ is a subspace of the fiber
 product $F$. Further,
 \begin{align}\dim \my&=\dim X+\sum_{i=1}^s(\dim \Y_i-\dim X)\\
 &=\dim X+\sum_{i=1}^s(\dim G/G_i+\dim Y_i-\dim X)\\
 &=\dim X+\sum_{i=1}^s(\dim G/G_i-\codim Y_i).
 \end{align}
 From this we see that $\dim F=\dim T_{\mathfrak a}(\my).$ This proves the lemma.
 \end{proof}

\section{Intersection of general translates of Schubert varieties}

We follow the notation from Section 2.  For $w\in W^P$, let $Q_w$ be the stabilizer of
the Schubert variety $X_w$ inside $G/P$ under the left multiplication of $G$ on $G/P$. Then,
clearly, $Q_w$ is a standard parabolic subgroup of $G$.
Let $$Y_w:=Q_w \dot{w}\subset X_w,$$
and let $Z_w$ denote the smooth
locus of $X_w$. Clearly
$$X_w\supset Z_w\supset
Y_w\supset C_w,$$
and each of $Z_w,
Y_w, C_w$ is an open subset of $X_w$.

\begin{remark} {\em It is instructive to look at the example of $G/P=\Gr(r, n)$.
Let
$$F_{\sssize{\bullet}}: 0\subsetneq F_1\subsetneq F_2\subsetneq
\dots\subsetneq F_n=\Bbb{C}^n$$ be the standard flag in $\Bbb{C}^n$, and
let $I=\{i_1<\dots<i_r\}$ be  a subset of $\{1,\dots,n\}$ of cardinality
$r$.
Consider the (closed) Schubert variety
$\Omega_I(F_{\sssize{\bullet}})=\{V\in
\operatorname{Gr}(r, n)\mid \dim V\cap F_{i_k}\geq k, k=1,\dots,
r\}$.
Let $J=\{i\in I: i+1\not\in I\}$.
It is easy to see that $\Omega_I(F_{\sssize{\bullet}})=\{V\in
\operatorname{Gr}(r, n)\mid \dim V\cap F_{i_k}\geq k, \forall
i_k\in J\}$. So $I\setminus J$ is ``redundant'' for the definition
of the closed Schubert variety $\Omega_I(F_{\sssize{\bullet}})$.

It is easy to see that the stabilizer of the Schubert variety
$\Omega_I(F_{\sssize{\bullet}})$ is
$Q_I:=\{g\in \SL(n): gF_j \subset F_j, \forall j\in J\}$. We may think of
$Q_I$ as the set of elements of $\SL(n)$ that preserve the parts of
$F_{\sssize{\bullet}}$ ``essential'' for the definition of the closed
Schubert variety $\Omega_I(F_{\sssize{\bullet}})$. }
\end{remark}

It may be remarked that if $P$ is a minuscule or cominuscule maximal parabloic,
then $Z_w=Y_w$ (cf. [BP]).

Fix a positive integer $s\geq 1$ and fix $w_1, \dots, w_s\in
W^P$, so that
\begin{equation}\label{multiplicity}
[X_{w_1}]\cdot \ldots \cdot [X_{w_s}]=d[X_e]\in H^{*}(G/P),
\,\,\text{for some}\, d>0.
\end{equation}
There are four universal intersections that
will be relevant here. Let $\delta: G/P \to (G/P)^s$ be the diagonal embedding.
 We denote its image by $\delta(G/P)$ and identify it with $G/P$. For a locally-closed
 $B$-subvariety $A\subset G/P$, let
 $\mathfrak{A}:=G\times_B A$ be the total space of the fiber bundle with fiber $A$ associated
 to the principal $B$-bundle $G \to G/B$. Then, there is a $G$-equivariant morphism $m_A:
 \mathfrak{A} \to
 G/P$ defined by $[g,x]\mapsto gx$, which is a smooth morphism if $A$ is smooth.
Now, consider the product
$$\mathfrak{X}:=\mathfrak{X}_{w_1}\times \dots \times \mathfrak{X}_{w_s},$$
where $\mathfrak{X}_{w_i}=G\times_B\,X_{w_i}$,
and similarly define $\mathfrak{Y}, \mathfrak{Z}, \mathfrak{C}$ by replacing
$X_{w_i}$ with $Y_{w_i}, Z_{w_i}, C_{w_i}$ respectively. Let
$m_X: \mathfrak{X} \to (G/P)^s$ be the $G$-equivariant morphism
$m_{X_{w_1}}\times \dots \times m_{X_{w_s}}$ acting componentwise.
We similarly define $m_Y, m_Z,m_C$.

Finally, we define the (universal intersection) $G$-scheme
$ \mathcal{X}$ as the fiber product of $\delta$ with $m_X$.
We similarly define the $G$-schemes $ \mathcal{Y}, \mathcal{Z}, \mathcal{C}$ by replacing
$m_X$ with $m_Y, m_Z,m_C$ respectively. Since $\delta$ is a closed embedding,
$\mathcal{X}, \mathcal{Y}, \mathcal{Z}, \mathcal{C}$ are the scheme theoretic
inverse images of $\delta(G/P)$ under $m_X, m_Y, m_Z,m_C$ respectively. Moreover, since
$m_Y, m_Z,m_C$ are smooth morphisms,
 $ \mathcal{Y}, \mathcal{Z}, \mathcal{C}$ are reduced closed subschemes of
 $\mathfrak{Y}, \mathfrak{Z}, \mathfrak{C}$ respectively.

It is easy to see that (due to the assumption ~\eqref{multiplicity})
\begin{equation}\label{4.2} \dim  \mathcal{X}=s\times \dim (G/B).
\end{equation}

Observe that, set theoretically,
$$ \mathcal{X}=\{(g_1B, \dots, g_sB, x)\in (G/B)^s\times G/P: x\in \cap_{i=1}^s\,g_iX_{w_i}\}.$$
There is a similar description for $\mathfrak{Y}, \mathfrak{Z}, \mathfrak{C}$.

The open embeddings
$$C_{w_i}
\subset Y_{w_i} \subset Z_{w_i} \subset X_{w_i}$$ give rise to $G$-equivariant
open embeddings:
$$\mathcal{C}\subset\mathcal{Y}\subset\mathcal{Z}\subset\mathcal{X},$$
and $\mathcal{X}$ is projective.

\begin{lemma}\label{lemma3.1}
\begin{enumerate}
\item $\mathcal{X}$ is irreducible
and so is  $\mathcal{Y}$, $\mathcal{Z}$  and $ \mathcal{C}$.

\item $\mathcal{Z}$ is a smooth
variety (and hence so is  $\mathcal{Y}$ and $ \mathcal{C}$).
\item
The complement of $\mathcal{Z}$ in $\mathcal{X}$ is of
codimension $\geq 2$.
\end{enumerate}
\end{lemma}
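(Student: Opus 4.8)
The plan is to establish the three parts in order, using the fiber‐product descriptions of $\mathcal{X},\mathcal{Y},\mathcal{Z},\mathcal{C}$ and the smoothness of the morphisms $m_Y,m_Z,m_C$ already recorded above, together with Kleiman transversality.

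First, for the irreducibility in (1): the morphism $m_X:\mathfrak{X}\to (G/P)^s$ is $G$-equivariant with irreducible source (a tower of fiber bundles with irreducible fibers $X_{w_i}$ over irreducible bases) and its image is all of $(G/P)^s$, since $G$ acts transitively on $(G/P)^s$ componentwise. By Kleiman's transversality theorem, the generic fiber of $m_X$ over the $G$-orbit of a point of $\delta(G/P)$ is of the expected dimension and is irreducible (the generic translate intersection of the $X_{w_i}$ is irreducible by Kleiman, using that $X_{w_i}$ are irreducible varieties in the transitive $G$-space $G/P$), and the generic fiber being irreducible of dimension equal to $\dim\mathcal{X}-\dim\delta(G/P)$ forces $\mathcal{X}$ to be irreducible: $\mathcal{X}\to\delta(G/P)$ is a dominant $G$-equivariant map from a possibly reducible source whose generic fiber is irreducible, and $G$ acts transitively on the base, so $\mathcal{X}$ is irreducible. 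The same argument, verbatim, applies with $X_{w_i}$ replaced by the $G$-stable (in the sense of $B$-stable) locally closed subvarieties $Y_{w_i},Z_{w_i},C_{w_i}$, so $\mathcal{Y},\mathcal{Z},\mathcal{C}$ are irreducible. (One small point: $Y_{w_i},Z_{w_i},C_{w_i}$ are $B$-stable open subsets of $X_{w_i}$, hence irreducible and of the same dimension $\ell(w_i)$, so the dimension count \eqref{4.2} is unchanged.)

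Next, (2): $\mathcal{Z}$ is the scheme-theoretic fiber product of $\delta$ with the smooth morphism $m_Z:\mathfrak{Z}\to (G/P)^s$. Since $\delta$ is a closed immersion of the smooth variety $G/P$ and $m_Z$ is smooth, $\mathcal{Z}$ is the preimage of a smooth subvariety under a smooth morphism, hence smooth; combined with irreducibility from (1), $\mathcal{Z}$ is a smooth variety. The same holds for $\mathcal{Y}$ and $\mathcal{C}$. (Alternatively: $\hat m_Z:\mathcal{Z}\to G/P$ is smooth as a base change of the smooth $m_Z$, and $G/P$ is smooth, so $\mathcal{Z}$ is smooth.)

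Finally, (3), which I expect to be the main obstacle: one must show $\operatorname{codim}_{\mathcal{X}}(\mathcal{X}\setminus\mathcal{Z})\geq 2$. The set $\mathcal{X}\setminus\mathcal{Z}$ is covered by the pieces $\mathcal{X}\cap (\mathfrak{X}_{w_1}\times\cdots\times (\mathfrak{X}_{w_i}\setminus\mathfrak{Z}_{w_i})\times\cdots\times\mathfrak{X}_{w_s})$, i.e.\ set-theoretically $\{(g_1B,\dots,g_sB,x): x\in\cap_j g_jX_{w_j},\ x\in g_i(X_{w_i}\setminus Z_{w_i})\ \text{for some }i\}$. Fix $i$; since $X_{w_i}$ is normal, $X_{w_i}\setminus Z_{w_i}$ has codimension $\geq 2$ in $X_{w_i}$. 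The locus in question maps to $(G/B)^s\times G/P$; I would bound its dimension by a Kleiman-type count. Form the auxiliary scheme $\mathcal{X}^{(i)}$ built exactly as $\mathcal{X}$ but with the $i$-th factor $X_{w_i}$ replaced by the locally closed $B$-stable (actually $B$-stable) subset $X_{w_i}\setminus Z_{w_i}$ of codimension $\geq 2$; its dimension is, by the same transversality argument used for \eqref{4.2}, at most $s\dim(G/B) - 2$, because lowering the dimension of one of the pieces $X_{w_i}$ by $c$ lowers $\dim\mathcal{X}$ by $c$ as long as the relevant generic intersections remain proper — this is the content of applying Kleiman's theorem to the (generically irreducible, lower-dimensional) translates. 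Hence each piece of $\mathcal{X}\setminus\mathcal{Z}$ has dimension $\leq s\dim(G/B)-2 = \dim\mathcal{X}-2$, giving codimension $\geq 2$. The delicate point to verify carefully is that Kleiman's theorem genuinely applies to $X_{w_i}\setminus Z_{w_i}$, which need not be irreducible; but one may stratify it into irreducible locally closed pieces, apply Kleiman's dimension estimate to each, and take the maximum, noting that the defining condition $\dim X_{w_i}-\dim Z_{w_i}\geq 2$ (equivalently $\operatorname{codim}\geq 2$, since $X_{w_i}$ is normal) is preserved piece by piece. This reduces (3) to the normality of Schubert varieties, which is classical.
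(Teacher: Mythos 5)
Parts (2) and (3) of your proposal are essentially the paper's own argument: smoothness of $\mathcal{Z}$ is exactly the base-change observation, and for (3) the paper likewise reduces to the normality of Schubert varieties and bounds the dimension of each stratum of $\mathcal{X}\setminus\mathcal{Z}$. One simplification you are missing in (3): no appeal to Kleiman (and no "properness of generic intersections" caveat) is needed, because $X_{w_i}\setminus Z_{w_i}$ is a union of Schubert cells $C_v$ with $\ell(v)\le \ell(w_i)-2$; each $C_v$ is $B$-stable and smooth, so $m_{C_v}$ is a smooth morphism and the dimension of the corresponding stratum of $\mathcal{X}$ is computed \emph{exactly} by the fiber-product formula, giving $\le \dim\mathcal{X}-2$ with no genericity hypothesis.

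The genuine gap is in (1). Your justification that the relevant fibers are irreducible — ``the generic translate intersection of the $X_{w_i}$ is irreducible by Kleiman'' — is wrong on two counts. First, the translate intersection $\cap_i g_iX_{w_i}$ is the fiber of $\pi:\mathcal{X}\to(G/B)^s$, not the fiber of $m_X$ over a point of $\delta(G/P)$; second, under \eqref{multiplicity} that intersection is generically a reduced set of $d$ points, hence \emph{not} irreducible when $d>1$ (and Kleiman's theorem asserts transversality and dimension statements, not irreducibility). The object actually needed is the fiber of $m_{X_{w_i}}:\mathfrak{X}_{w_i}\to G/P$ over $x$, namely $\{gB:\ g^{-1}x\in X_{w_i}\}$, which is a translate of $\overline{Pw_i^{-1}B}/B$ and therefore irreducible; this is the paper's input. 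Relatedly, ``dominant with irreducible generic fiber over an irreducible base'' does not by itself force the total space to be irreducible; what rescues the argument is precisely the mechanism you only gesture at: by homogeneity \emph{all} fibers over $\delta(G/P)$ are irreducible of the same dimension, every irreducible component of $\mathcal{X}$ is stable under the connected group $G$, an irreducible fiber must lie in a single component, and that component then contains the $G$-saturation of the fiber, i.e.\ all of $\mathcal{X}$. With the fiber irreducibility corrected, your argument coincides with the paper's; also note that for $\mathcal{Y},\mathcal{Z},\mathcal{C}$ there is no need to rerun anything, since they are open subsets of the irreducible $\mathcal{X}$.
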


\begin{proof} (1) It is easy to see that each fiber of $m_{X_w}: \mathfrak{X_w} \to G/P$
is irreducible. Thus, each fiber of $m_X: \mathfrak{X} \to (G/P)^s$ is also irreducible. Now, take an
irreducible component $ \mathcal{X}_1$ of $ \mathcal{X}$ such that $ \mathcal{X}_1$
contains the full fiber of
$m_X$ over the base point in $\delta(G/P)$. Since $ \mathcal{X}_1$ is $G$ stable,
$ \mathcal{X}_1$ must contain the full fiber over any point in $\delta(G/P)$. Thus,
$ \mathcal{X}_1= \mathcal{X}$, proving that $ \mathcal{X}$ is irreducible. Since
$\mathcal{Y}$, $\mathcal{Z}$  and $ \mathcal{C}$ are open subsets of $\mathcal{X}$,
they must be irreducible too.

(2) For the second part, observe that the canonical map $\mathcal{Z} \to \delta(G/P)$
is a smooth morphism. Since $G/P$ is smooth, we get the smoothness of $\mathcal{Z}$.

(3) Since the Schubert varieties $X_w$ are normal, the complement of $Z_w$ in ${X}_w$
is of codimension $\geq
2$ and is covered by Schubert cells. Thus, the complement of $\mathfrak{Z}$ in $\mathfrak{X}$
is of codimension $\geq
2$. From this it is easy to see that the complement of $\mathcal{Z}$ in $\mathcal{X}$ is of
codimension $\geq 2$.
\end{proof}

We have a natural $G$-equivariant projection $\pi:\mathcal{X}\to (G/B)^s$ obtained coordinatewise
from the projections $\mathfrak{X}_{w_i}\to G/B.$ As observed in the identity ~\eqref{4.2},
the domain and
the range of $\pi$ have the same dimension. The following lemma follows from
Lemma ~\ref{n4.2}.
\begin{lemma} \label{transverse} For any point $\mathfrak{a}=([g_1,x_1], \dots,
[g_s,x_s])\in \mathcal{Z}$, the
derivative $(D\pi)_{\mathfrak{a}}$ of $\pi$ at $\mathfrak{a}$ has
$$\Ker (D\pi)_{\mathfrak{a}}\simeq \cap_{i=1}^s T_x(g_iZ_{w_i}),$$
where $x=g_1x_1=\dots =g_sx_s$.

In particular, $\pi$ is regular at $\mathfrak{a}$ if and only if the intersection
$\cap_{i=1}^s g_iZ_{w_i}$ in $G/P$ is transverse at $x$.
\end{lemma}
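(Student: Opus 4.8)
The plan is to recognize $\pi\colon\mathcal{Z}\to(G/B)^s$ as an instance of the construction in Section~4, and then to extract $\Ker(D\pi)_{\mathfrak a}$ directly from the fiber-product description of $T_{\mathfrak a}(\mathcal{Z})$ given by Lemma~\ref{n4.2}. Concretely, I would specialize that construction by taking $X=G/P$ (which carries a transitive $G$-action), $Y_i=Z_{w_i}$ and $G_i=B$. Since $Z_{w_i}$ is the smooth locus of the irreducible $B$-stable variety $X_{w_i}$, it is a smooth irreducible $B$-stable locally closed subvariety of $G/P$; moreover $\codim Z_{w_i}=\codim X_{w_i}=\dim G/P-\ell(w_i)$, so comparing cohomological degrees on the two sides of \eqref{multiplicity} gives $\sum_i\codim Z_{w_i}=\dim G/P$, which is exactly the hypothesis needed in Section~4. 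With these choices the bundle $\Y_i$ of Section~4 becomes $\mathfrak{Z}_{w_i}=G\times_B Z_{w_i}$, the fiber product of $m_Z$ with the diagonal $\delta$ --- i.e.\ the variety to which Lemma~\ref{n4.2} applies --- is precisely $\mathcal{Z}$ (smooth and irreducible by Lemma~\ref{lemma3.1}), the smooth morphism ``$\hat m$'' of Section~4 is the canonical projection $\mathcal{Z}\to\delta(G/P)\cong G/P$, and the morphism ``$\pi$'' of Section~4 is the $\pi$ of the present lemma.

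Next I would invoke Lemma~\ref{n4.2}: for $\mathfrak a=([g_1,x_1],\dots,[g_s,x_s])\in\mathcal{Z}$ and $x:=g_1x_1=\cdots=g_sx_s$, the space $T_{\mathfrak a}(\mathcal{Z})$ is the fiber product of $T_x(G/P)$ and $\bigoplus_{i=1}^s T_{\bar{g}_i}(G/B)$, where the leg out of $T_x(G/P)$ is the componentwise quotient onto $\bigoplus_i T_x(G/P)/T_x(g_iZ_{w_i})$, the leg out of $\bigoplus_i T_{\bar{g}_i}(G/B)$ is $\bigoplus_i\Psi_{[g_i,x_i]}$, the projection to $T_x(G/P)$ is $D\hat m$, and the projection to $\bigoplus_i T_{\bar{g}_i}(G/B)$ is $(D\pi)_{\mathfrak a}$. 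Thus a tangent vector at $\mathfrak a$ is a pair $(v,(u_i))$ with $v\in T_x(G/P)$ and $u_i\in T_{\bar{g}_i}(G/B)$ subject to $v\equiv\Psi_{[g_i,x_i]}(u_i)\pmod{T_x(g_iZ_{w_i})}$ for every $i$, and $(D\pi)_{\mathfrak a}$ sends $(v,(u_i))$ to $(u_i)$. Hence $\Ker(D\pi)_{\mathfrak a}$ is the set of pairs $(v,0)$ with $v\in T_x(g_iZ_{w_i})$ for all $i$ (and conversely any such $v$ yields a pair $(v,0)$ in $T_{\mathfrak a}(\mathcal{Z})$, as both sides of each congruence are then zero modulo $T_x(g_iZ_{w_i})$); projecting via $D\hat m$ to the first coordinate then identifies $\Ker(D\pi)_{\mathfrak a}$ isomorphically with $\bigcap_{i=1}^s T_x(g_iZ_{w_i})$, which is the first assertion.

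For the ``in particular'' clause I would use the dimension count: $\mathcal{Z}$ is open in $\mathcal{X}$, so $\dim\mathcal{Z}=s\dim(G/B)=\dim(G/B)^s$ by \eqref{4.2}; therefore $\pi$ is regular at $\mathfrak a$ --- i.e.\ $(D\pi)_{\mathfrak a}$ is an isomorphism --- precisely when $(D\pi)_{\mathfrak a}$ is injective, that is when $\bigcap_{i=1}^s T_x(g_iZ_{w_i})=0$. Since $\sum_i\codim g_iZ_{w_i}=\sum_i\codim Z_{w_i}=\dim G/P$, the vanishing of this intersection of tangent spaces is exactly the statement that $\bigcap_i g_iZ_{w_i}$ is transverse at $x$. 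The argument is short and presents no real difficulty; the one step that demands attention is the first one --- correctly matching the abstract data (the variety $X$ with transitive $G$-action, the subvarieties $Y_i$, the subgroups $G_i$, the associated fiber product and its two projections) of Section~4 with the concrete objects $G/P$, $Z_{w_i}$, $B$, $\mathcal{Z}$ here, and keeping in mind that in this application $Y_i=Z_{w_i}$, so that the subspace ``$T_x(g_iY_i)$'' appearing in Lemma~\ref{n4.2} is literally $T_x(g_iZ_{w_i})$.
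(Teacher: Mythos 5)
Your proposal is correct and is exactly the argument the paper intends: the paper simply states that Lemma~\ref{transverse} ``follows from Lemma~\ref{n4.2},'' and your specialization $X=G/P$, $Y_i=Z_{w_i}$, $G_i=B$, followed by reading off the kernel of the projection from the fiber-product description of $T_{\mathfrak a}(\mathcal{Z})$, fills in precisely the details that are left implicit. The dimension count for the ``in particular'' clause is also the intended one.
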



Using  Kleiman's transversality
theorem [BK, Proposition 3] and our assumption ~\eqref{multiplicity}, the map
$\pi_{|\mathcal{Z}}:\mathcal{Z} \to (G/B)^s$ is
generically finite.
Let $R$ be the ramification
divisor for the map $\pi_{|\mathcal{Z}}$ (equipped with the scheme structure
described in Section 3). Under the assumption of the
following corollary, the
hypotheses of Proposition ~\ref{jan2} are in place here and allow us to
conclude the following:
\begin{corollary}\label{cor4.3}
Assume that $d=1$ in equation ~\eqref{multiplicity}. Then,
for every $n\geq 1$,
\begin{equation}\label{conclusion}
h^0(\mathcal{Z},\mathcal{O}(nR))=1.
\end{equation}
\end{corollary}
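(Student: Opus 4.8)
The plan is to deduce the corollary from Proposition~\ref{jan2}, applied to the morphism $\pi_{|\mathcal{Z}}\colon \mathcal{Z}\to (G/B)^s$ with $X=\mathcal{Z}$, $Y=(G/B)^s$ and $\bar X=\mathcal{X}$. The two things to verify are that $\pi_{|\mathcal{Z}}$ is birational (under the hypothesis $d=1$) and that the triple $(\mathcal{Z},\mathcal{X},(G/B)^s)$ satisfies the hypotheses of that proposition; both will follow quickly from facts already established above.

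First I would establish birationality. By the discussion preceding the corollary, Kleiman's transversality theorem together with the assumption~\eqref{multiplicity} shows that $\pi_{|\mathcal{Z}}$ is generically finite, and its source and target have the same dimension $s\dim(G/B)$ by~\eqref{4.2}. To identify the generic fibre, recall that, set-theoretically, the fibre of $\pi$ over $(g_1B,\dots,g_sB)$ is the intersection $\cap_{i=1}^s g_iX_{w_i}\subseteq G/P$; for $(g_1B,\dots,g_sB)$ in a suitable dense open subset of $(G/B)^s$ this intersection is transverse and $0$-dimensional, hence consists of exactly $d$ reduced points. Since by Lemma~\ref{lemma3.1}(3) the closed set $\mathcal{X}\setminus\mathcal{Z}$ has codimension $\ge 2$ in $\mathcal{X}$, its image in $(G/B)^s$ has dimension strictly smaller than $s\dim(G/B)$, so for a generic tuple $(g_1B,\dots,g_sB)$ the intersection $\cap_i g_iX_{w_i}$ is contained in $\cap_i g_iZ_{w_i}$ and is therefore also the fibre of $\pi_{|\mathcal{Z}}$. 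Thus the generic fibre of $\pi_{|\mathcal{Z}}$ consists of $d$ points. When $d=1$, $\pi_{|\mathcal{Z}}$ is a generically finite morphism of degree one from the irreducible smooth variety $\mathcal{Z}$ (Lemma~\ref{lemma3.1}(1),(2)); since $\mathcal{X}$ is projective and irreducible, $\pi(\mathcal{X})$ is closed, irreducible and of full dimension in the irreducible variety $(G/B)^s$, hence equals $(G/B)^s$, so $\pi_{|\mathcal{Z}}$ is dominant; being dominant and generically finite of degree one, it is birational.

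It then remains to record that the hypotheses of Proposition~\ref{jan2} hold: $\mathcal{Z}$ is smooth and irreducible and $(G/B)^s$ is smooth, irreducible and projective; $\mathcal{X}$ is an irreducible projective scheme containing $\mathcal{Z}$ as an open subscheme, with each irreducible component of $\mathcal{X}\setminus\mathcal{Z}$ of codimension $\ge 2$ (Lemma~\ref{lemma3.1}(1),(3) together with the projectivity of $\mathcal{X}$); and the coordinatewise projection $\pi\colon\mathcal{X}\to(G/B)^s$ is a regular morphism extending $\pi_{|\mathcal{Z}}$. Moreover the ramification divisor $R$ attached to $\pi_{|\mathcal{Z}}$ with the scheme structure of Section~3 is precisely the divisor appearing in Proposition~\ref{jan2}. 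Applying that proposition gives $h^0(\mathcal{Z},\mathcal{O}(nR))=1$ for all $n\ge 1$, which is~\eqref{conclusion}. The only step that genuinely requires an argument is the birationality of $\pi_{|\mathcal{Z}}$, and within that the point that the transverse intersection $\cap_i g_iX_{w_i}$ of generic translates avoids the non-smooth loci of the $X_{w_i}$ (so lands in $\mathcal{Z}$); this is exactly where the codimension estimate of Lemma~\ref{lemma3.1}(3) is used. Everything else is assembling facts already in hand.
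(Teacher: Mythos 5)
Your proposal is correct and follows essentially the same route as the paper: check the hypotheses of Proposition~\ref{jan2} via Lemma~\ref{lemma3.1}, and deduce birationality of $\pi_{|\mathcal{Z}}$ from Kleiman's transversality theorem, which makes the generic fibre a single reduced point when $d=1$. The only (harmless) variation is that you justify the generic intersection landing in $\cap_i g_iZ_{w_i}$ by a dimension count on $\pi(\mathcal{X}\setminus\mathcal{Z})$ using Lemma~\ref{lemma3.1}(3), whereas the paper cites the density statement of [BK, Proposition 3] directly.
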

\begin{proof} By Lemma ~\ref{lemma3.1}, all the hypotheses of Proposition
~\ref{jan2} are satisfied except the hypothesis that $\pi_{|\mathcal{Z}}$ is birational,
 which we now prove.

By [BK, Proposition 3], there exists a nonempty open subset $U\subset (G/B)^s$ such
that for each $x=(g_1B, \dots, g_sB)\in U$, the intersection $\cap_{i=1}^s\,g_iZ_{w_i}$
is transverse at each point of the intersection and $\cap_{i=1}^s\,g_iZ_{w_i}$ is dense in
$\cap_{i=1}^s \,g_iX_{w_i}$. Moreover, since $d=1$ (by assumption), the intersection
$\cap_{i=1}^s\,g_iZ_{w_i}$ consists of a single point. From this we see that
$(\pi_{|\mathcal{Z}})^{-1}(x)$ consists of  exactly one point for each $x\in U$
and, moreover, by Lemma ~\ref{transverse}, $(\pi_{|\mathcal{Z}})^{-1}(x)
\subset \mathcal{Z}\setminus R.$ Thus, $\pi_{|(\pi_{|\mathcal{Z}})^{-1}(U)}:
(\pi_{|\mathcal{Z}})^{-1}(U)\to U$ is an isomorphism, proving that
$\pi_{|\mathcal{Z}}$ is birational. Now applying Proposition ~\ref{jan2}, we get the corollary.
\end{proof}

The aim now is to have equation ~\eqref{conclusion} bear
representation theoretic consequences. However, it is the space
$H^0(\mathcal{Y},\mathcal{O}(nR))$ which has clear relations to
invariant theory.

\section{Connecting $h^0(\mathcal{Y},\mathcal{O}(nR))$ to invariant
theory}\label{march1}

We first prove that $\mathcal{Y}$ and $R\cap \mathcal{Y}$ are obtained from
a base change with connected fibers.
To do this, define
$$\mathfrak{Y}':=(G\times_{Q_{w_1}}\,Y_{w_1})\times \dots \times
(G\times_{Q_{w_s}}\,Y_{w_s}).$$
Similar to the map $m_Y$, we define the map $m_Y': \mathfrak{Y}' \to (G/P)^s$
obtained from the coordinatewise maps $G\times_{Q_{w_i}}\,Y_{w_i} \to G/P,
[g,x]\mapsto gx.$ Again, $m_Y'$ is a smooth morphism. Now, let $ \mathcal{Y}'$ be
the fiber product of $m_Y'$ with $\delta$. Then, $ \mathcal{Y}'$ is an irreducible
 smooth variety of the same dimension as that of $(G/Q_{w_1})\times \dots \times (G/Q_{w_s})$
 (by virtue of the same proof given in the last section for the corresponding results for
 $\mathcal{Y}$).
Similar to the map $\pi_{|\mathcal{Y}} :\mathcal{Y} \to
(G/B)^s$, we have the map
$$\pi' :\mathcal{Y}' \to
(G/Q_{w_1})\times \dots \times (G/Q_{w_s}).$$

It is easy to see that the following diagram is Cartesian:
$$
\xymatrix{ \mathcal{Y}\ar[d]^{\pi}\ar[r] & \mathcal{Y}'\ar[d]^{\pi'}\\
   (G/B)^s\ar[r] &  (G/Q_{w_1})\times \dots \times (G/Q_{w_s}),}$$
where the two horizontal maps are the canonical projections. (To prove this, observe that
the above diagram is clearly Cartesian with $\mathcal{Y}, \mathcal{Y}'$ in the above
diagram   replaced by $\mathfrak{Y}, \mathfrak{Y}' $ respectively.)

Since $\pi$ is a dominant morphism, so is $\pi'$. Thus, by Lemma \ref{noise}, the
ramification divisor $S:=R\cap \mathcal{Y}$ of $\pi|_{\mathcal{Y}}$ is the pull-back
of the
ramification divisor $R'$ of $\pi'$. In particular, the line bundle
$$\mathcal{O}(nR)|_{\mathcal{Y}}=\mathcal{O}(nS).$$

We therefore conclude that under the $G$-equivariant pull-back map,
\begin{lemma} \label{n6.1} For any $n\in \Bbb Z$,
$H^0(\mathcal{Y},\mathcal{O}(nR)|_{\mathcal{Y}})\simeq H^0(\mathcal{Y}',\mathcal{O}(nR'))$,
as $G$-modules.
\end{lemma}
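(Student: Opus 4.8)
The plan is to read off everything from the Cartesian square displayed just above the lemma, which exhibits $\mathcal{Y}$ as the base change of $\mathcal{Y}'$ along the projection $p\colon (G/B)^s\to (G/Q_{w_1})\times\dots\times(G/Q_{w_s})$; write $f\colon\mathcal{Y}\to\mathcal{Y}'$ for the resulting ($G$-equivariant) morphism. Two ingredients are needed. First, $\mathcal{O}(nR)|_{\mathcal{Y}}=f^*\mathcal{O}(nR')$; this is already recorded in the discussion preceding the lemma, since by Lemma~\ref{noise} applied to the smooth morphism $p$ the divisor $S=R\cap\mathcal{Y}$ is the pull-back of $R'$, and $\mathcal{O}(nR)|_{\mathcal{Y}}=\mathcal{O}(nS)$. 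Second, $f_*\mathcal{O}_{\mathcal{Y}}=\mathcal{O}_{\mathcal{Y}'}$. Granting the second point, the projection formula gives $f_*f^*\mathcal{O}(nR')=\mathcal{O}(nR')\otimes f_*\mathcal{O}_{\mathcal{Y}}=\mathcal{O}(nR')$, with the unit map $\mathcal{O}(nR')\to f_*f^*\mathcal{O}(nR')$ an isomorphism; taking global sections (using $H^0(\mathcal{Y}',f_*\mathcal{F})=H^0(\mathcal{Y},\mathcal{F})$) then identifies the $G$-equivariant pull-back map $f^*\colon H^0(\mathcal{Y}',\mathcal{O}(nR'))\to H^0(\mathcal{Y},\mathcal{O}(nR)|_{\mathcal{Y}})$ with an isomorphism, which is precisely the assertion of the lemma (the $G$-equivariance is automatic, as $f$ and the linearizations of the two line bundles are $G$-equivariant by construction).

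So the only real work is $f_*\mathcal{O}_{\mathcal{Y}}=\mathcal{O}_{\mathcal{Y}'}$. The morphism $p$ is the product of the canonical fibrations $G/B\to G/Q_{w_i}$, each a locally trivial fiber bundle (in the Zariski topology) with fiber $Q_{w_i}/B$, associated to the principal $Q_{w_i}$-bundle $G\to G/Q_{w_i}$; hence $p$, and therefore its base change $f$, is a proper, locally trivial fiber bundle with fiber $F:=\prod_{i=1}^s(Q_{w_i}/B)$. Since each $Q_{w_i}/B$ is a projective, connected flag variety, $H^0(Q_{w_i}/B,\mathcal{O})=\Bbb{C}$, so $F$ is connected and $H^0(F,\mathcal{O})=\Bbb{C}$. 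Computing $f_*\mathcal{O}_{\mathcal{Y}}$ locally over $\mathcal{Y}'$, where $f$ looks like $U\times F\to U$, gives $f_*\mathcal{O}_{\mathcal{Y}}|_U=\mathcal{O}_U\otimes H^0(F,\mathcal{O})=\mathcal{O}_U$, and these patch to $f_*\mathcal{O}_{\mathcal{Y}}=\mathcal{O}_{\mathcal{Y}'}$. (Alternatively: $f$ is proper and surjective with geometrically connected reduced fibers, $\mathcal{Y}$ is reduced, and $\mathcal{Y}'$ is normal since it is smooth, so $f_*\mathcal{O}_{\mathcal{Y}}=\mathcal{O}_{\mathcal{Y}'}$ by Zariski's connectedness theorem.)

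I do not expect a genuine obstacle here: the statement is essentially the observation that pushing a line bundle forward along a flag-bundle fibration recovers the same line bundle. The only points requiring a little care are (i) confirming that $\mathcal{O}(nR)|_{\mathcal{Y}}$ really is the pull-back of $\mathcal{O}(nR')$, which is exactly Lemma~\ref{noise} for the smooth base change $p$, and (ii) tracking the $G$-equivariance through the projection formula, which is harmless since every map and linearization involved is $G$-equivariant from the outset.
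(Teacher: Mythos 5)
Your proposal is correct and follows essentially the same route as the paper: the paper's entire argument is the Cartesian square, the identification of $S=R\cap\mathcal{Y}$ with the pull-back of $R'$ via Lemma~\ref{noise}, and the (implicit) fact that the horizontal maps are proper fibrations with connected projective fibers $\prod_i Q_{w_i}/B$, so pull-back on global sections is an isomorphism. You have merely made explicit the step $f_*\mathcal{O}_{\mathcal{Y}}=\mathcal{O}_{\mathcal{Y}'}$ and the projection formula, which the paper leaves to the reader.
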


Define the  $P$-variety (under the diagonal action of $P$):
$$\mathcal{P}= (P/(w_1^{-1}Q_{w_1}w_1\cap P))\times \dots
\times (P/(w_s^{-1}Q_{w_s}w_s\cap P)),$$ and define the $G$-equivariant morphism of
$G$-varieties:
$$\phi: G\times_P \mathcal{P}\to \mathcal{Y}', \,\,
[g,(\bar{p}_1, \dots, \bar{p}_s)] \mapsto
([gp_1w_1^{-1}, \dot{w}_1], \dots ,
[gp_sw_s^{-1}, \dot{w}_s]),$$
where $\bar{p_i}=p_i(w_i^{-1}Q_{w_i}w_i\cap P).$

It is easy to see that it is bijective. Since $\mathcal{Y}'$ is smooth and irreducible, $\phi$
 is an isomorphism by [K, Theorem A.11].

For any $w\in W^P$, it is easy to see that the Borel $B_L$ of the Levi subgroup $L$ of
$P$ is contained in $w^{-1} Q_w w\cap L$ (in fact, it is contained
in $w^{-1} B w$ by equation ~\eqref{eqn1}).

For any $\lambda \in X(H)$, we have a  $P$-equivariant
 line bundle $\cl_P (\lambda)$  on $P/B_L$ associated to the principal
 $B_L$-bundle $P\to P/B_L$
via the one dimensional $B_L$-module $\lambda^{-1}$. (As observed in Section 2,
any  $\lambda \in X(H)$ extends uniquely to a character of $B_L$.) The twist
in the definition of  $\cl (\lambda)$ is introduced so that the dominant characters
 correspond to the dominant line bundles.

For $w\in W^P$, define the character $\chi_w\in\frh^*$ by
$$\chi_w=\sum_{\beta\in (R^+\setminus R^+_\fl)\cap w^{-1}R^{+}} \beta \,.$$
 Then, from [K, 1.3.22.3] and equation ~\eqref{eqn1},
\begin{equation}\label{eqn5}
\chi_w = \rho -2\rho^L + w^{-1}\rho ,
\end{equation}
where $\rho$ (resp. $\rho^L$) is half the sum of roots in $R^+$ (resp. in
$ R^+_\fl$). It is easy to see that $\chi_w$ extends as a character of
$w^{-1}Q_ww\cap P$.

\begin{proposition} \label{lemma4.2} Assume that the $s$-tuple $(w_1, \dots, w_s)$ satisfying the condition
~\eqref{multiplicity} is Levi-movable. Then, for any $n\geq 1$,
$$H^0(\mathcal{Y},\mathcal{O}(nR)_{|\mathcal{Y}})^G\simeq \bigl[V_L(n(\chi_{w_1}
-\chi_1))^*\otimes V_L(n\chi_{w_2})^*\otimes \dots \otimes
 V_L(n\chi_{w_s})^*\bigr]^L,$$
 where $V_L(\chi)$ is the irreducible $L$-module with highest weight
 $\chi$. (Observe that for $w\in W^P$, $\chi_w$ is a $L$-dominant weight and so is
 $\chi_w-\chi_1$.)
\end{proposition}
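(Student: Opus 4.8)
The plan is to compute $H^0(\mathcal{Y},\mathcal{O}(nR)_{|\mathcal{Y}})^G$ by transporting everything to the fibre-bundle picture and then extracting sections along the fibre. By Lemma~\ref{n6.1} we have $H^0(\mathcal{Y},\mathcal{O}(nR)_{|\mathcal{Y}})\simeq H^0(\mathcal{Y}',\mathcal{O}(nR'))$ as $G$-modules, and by the isomorphism $\phi: G\times_P\mathcal{P}\stackrel{\sim}{\to}\mathcal{Y}'$ we may replace $\mathcal{Y}'$ by $G\times_P\mathcal{P}$. For a $G$-linearized line bundle on an induced space $G\times_P\mathcal{P}$, taking $G$-invariant global sections is the same as taking $P$-invariant global sections of the restriction to the fibre $\mathcal{P}$ (Frobenius reciprocity for induced bundles), so the first reduction is
\beqn
H^0(\mathcal{Y},\mathcal{O}(nR)_{|\mathcal{Y}})^G\simeq H^0(\mathcal{P},\phi^*\mathcal{O}(nR')_{|\mathcal{P}})^P .
\eeqn
Thus the task becomes: identify the line bundle $\phi^*\mathcal{O}(nR')$ restricted to $\mathcal{P}=\prod_i P/(w_i^{-1}Q_{w_i}w_i\cap P)$ as an explicit $P$-equivariant line bundle, and then compute its $P$-invariants.

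The second step is the identification of the ramification line bundle. By Lemma~\ref{noise}/\eqref{4.1} the ramification divisor behaves well under smooth base change, and $\mathcal{Y}'\to (G/Q_{w_1})\times\cdots\times(G/Q_{w_s})$ is a fibre bundle; so $R'$ is built fibrewise from the $\operatorname{Det}$ of the relevant derivative maps. Concretely, over the base point the fibre of $\mathcal{Y}'$ is $\mathcal{P}$, the fibre of the base is a point, and the map $\pi'$ restricted to the fibre is the constant-to-a-point map on $\mathcal{P}$; the derivative whose $\wedge^{\topp}$ defines $R'$ is (after the translations built into $\phi$) the map from $T_{\dot e}(G/P)$-type data to $\bigoplus_i T/T(g_iY_{w_i})$, exactly the map analyzed via Lemma~\ref{n4.2} and Lemma~\ref{transverse}. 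Pulling this back along $\phi$ and restricting to $\mathcal{P}$, the line bundle $\phi^*\mathfrak{L}_{|\mathcal{P}}$ is the external tensor product of the $P$-equivariant line bundles attached to the characters by which $P$ acts on $\wedge^{\topp}$ of the normal spaces to the $P$-translates of $Y_{w_i}$ at $\dot e$. Using $\chi_w=\sum_{\beta\in(R^+\setminus R^+_\fl)\cap w^{-1}R^+}\beta=\rho-2\rho^L+w^{-1}\rho$ from \eqref{eqn5}, together with the fact that $\wedge^{\topp}T^P$ contributes the character $\chi_1$ (the $w=e$ case), one gets that $\phi^*\mathcal{O}(R')_{|\mathcal{P}}$ is, factor by factor, $\mathcal{L}_P(\chi_{w_1}-\chi_1)\boxtimes\mathcal{L}_P(\chi_{w_2})\boxtimes\cdots\boxtimes\mathcal{L}_P(\chi_{w_s})$ (the asymmetry in the first factor is precisely the $-\chi_1$ coming from one copy of the tangent-space-top character of $G/P$ that is not cancelled). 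Here the Levi-movability hypothesis is exactly what is needed to know that these derivative maps are generically isomorphisms, so that the $\operatorname{Det}$ construction genuinely produces the ramification divisor with the claimed equivariant structure rather than something vanishing identically.

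For the third step, $\mathcal{P}=\prod_i P/(w_i^{-1}Q_{w_i}w_i\cap P)$ surjects onto $\prod_i P/B_L=(L/B_L)^s$ (since $B_L\subset w_i^{-1}Q_{w_i}w_i\cap P$ by the remark following the definition of $\phi$, and $\chi_{w_i}$ extends to a character of $w_i^{-1}Q_{w_i}w_i\cap P$), and the line bundle in question is pulled back from $(L/B_L)^s$; moreover $U_P$ acts trivially so one may replace $P$-invariants by $L^{ss}$-invariants — indeed by full $L$-invariants once the central torus acts with the right weights, which is automatic since we are looking at a $G$-linearized, hence $P$-linearized, bundle. By the Borel–Weil theorem $H^0(L/B_L,\mathcal{L}_P(\chi_w))=V_L(\chi_w)^*$, so
\beqn
H^0(\mathcal{P},\phi^*\mathcal{O}(nR')_{|\mathcal{P}})^P\simeq\bigl[V_L(n(\chi_{w_1}-\chi_1))^*\otimes V_L(n\chi_{w_2})^*\otimes\cdots\otimes V_L(n\chi_{w_s})^*\bigr]^L,
\eeqn
which is the assertion. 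The main obstacle I expect is the second step: pinning down the precise $P$-equivariant structure of $\phi^*\mathcal{O}(R')_{|\mathcal{P}}$ and, in particular, correctly accounting for the single uncancelled copy of $\wedge^{\topp}T^P$ that produces the $-\chi_1$ twist in the first factor. This requires carefully tracking the translations $p_i\mapsto p_iw_i^{-1}$ built into $\phi$, the identification $\operatorname{Ker}(D\pi')\simeq\cap_i T_x(g_iZ_{w_i})$ from Lemma~\ref{transverse}, and the fibrewise $\operatorname{Det}$-compatibility of Lemma~\ref{noise}, and then matching the resulting weight with $\chi_w=\rho-2\rho^L+w^{-1}\rho$; the asymmetry is a genuine feature (the diagonal $\delta$ singles out one factor) rather than a computational artifact, and getting its sign and which factor it lands on right is the delicate point. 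Everything after that is Borel–Weil plus Frobenius reciprocity.
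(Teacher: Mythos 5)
Your architecture is the paper's: (i) use Lemma~\ref{n6.1} and the isomorphism $\phi: G\times_P\mathcal{P}\to\mathcal{Y}'$ plus Frobenius reciprocity to reduce to $P$-invariant sections over the fibre $\mathcal{P}$; (ii) identify the ramification line bundle, via the fibre-product compatibility of $\operatorname{Det}$ (Lemmas~\ref{noise} and~\ref{n4.2}), with $\cl_P(\chi_{w_1}-\chi_1)\boxtimes\cl_P(\chi_{w_2})\boxtimes\cdots\boxtimes\cl_P(\chi_{w_s})$, the $-\chi_1$ twist on one factor coming from the single copy of $\wedge^{\topp}T^P$ singled out by the diagonal; (iii) conclude by Borel--Weil. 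Steps (i) and (ii) match the paper (the paper carries out (ii) by exhibiting the bundle map $G\times_P(\mathcal{P}\times T^P)\to\bigoplus_i G\times_P\bigl(P\times_{(w_i^{-1}Q_{w_i}w_i\cap P)}(T^P/T^P_{w_i})\bigr)$ and citing [BK, Lemma~6]).

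The genuine gap is in step (iii), the passage from $P$-invariants to $L$-invariants. You assert that ``$U_P$ acts trivially'' and that the central-torus condition is ``automatic''; neither is true. The space $\mathcal{P}=\prod_i P/(w_i^{-1}Q_{w_i}w_i\cap P)$ is a product of $P$-homogeneous spaces on which $U_P$ acts nontrivially in general, and it is \emph{not} equal to $(L/B_L)^s$ nor to $\mathfrak{L}=\prod_i L/(w_i^{-1}Q_{w_i}w_i\cap L)$ (also, the surjection you invoke goes the wrong way: since $B_L\subseteq w_i^{-1}Q_{w_i}w_i\cap P$, it is $(P/B_L)^s$ that maps onto $\mathcal{P}$, and $P/B_L\neq L/B_L$ because $U_P\not\subseteq B_L$). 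Restriction gives a map $H^0(\mathcal{P},\mathcal{M}^{\otimes n})^P\to H^0(\mathfrak{L},\mathcal{M}^{\otimes n}_{|\mathfrak{L}})^L$, but that this is an isomorphism is a substantive claim; in the paper it is exactly the citation of [BK, Theorem~15 and Remark~31(a)], and it is precisely \emph{here} that the Levi-movability hypothesis is used. Relatedly, you locate the use of Levi-movability in step (ii) (``so that the $\operatorname{Det}$ construction genuinely produces the ramification divisor''), but generic transversality already follows from condition~\eqref{multiplicity} with $d>0$ together with Kleiman's theorem; the identification of the equivariant line bundle in (ii) does not need Levi-movability. So the hypothesis you flagged as essential in step (ii) is actually doing its work in the step you waved through.
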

 \begin{proof}
Applying Lemma ~\ref{n4.2} to the case when $X=G/P, Y_i=Y_{w_i}, G_i=Q_{w_i},$
and using the isomorphism $\phi: G\times_P \mathcal{P}\to \mathcal{Y}'$ as above,
we get the following Cartesian diagram (for any $g\in G$ and ${\bf p}=
(\bar{p}_1, \dots, \bar{p}_s)\in \mathcal{P}$):
$$
\xymatrix{ T_{[g,{\bf p}]}(G\times_P\mathcal{P})\ar[d]\ar[r] & \oplus_{i=1}^s
T_{gp_iw_i^{-1}Q_{w_i}}(G/Q_{w_i})\ar[d]^{\pi'}\\
   T_{gP}(G/P)\ar[r] &  \oplus_{i=1}^s
\frac{T_{gP}(G/P)}{T_{gP}(gp_iw_i^{-1}Y_{w_i})},}$$
where the top horizontal map is induced from the $G$-equivariant composite map
$\pi'\circ \phi:G\times_P\mathcal{P}\to \prod_{i=1}^s(G/Q_{w_i})$ and the
bottom horizontal map is the canonical projection in each factor. Thus, by Lemma
~\ref{noise}, the ramification divisor $\phi^{-1}(R')$ is the same as the
 ramification divisor associated to the bundle map (between the vector bundles of the same rank
 over the base space $G\times_P\,\mathcal{P}$):
 $$G\times_P\,(\mathcal{P}\times T^P)\to \bigoplus_{i=1}^s\,G\times_P\,\bigl(P
 \times_{(w_i^{-1}Q_{w_i}w_i\cap P)}\,(T^P/T^P_{w_i})\bigr),$$
 where $T^P$ is the tangent space $T_{\dot{e}}(G/P), T_w^P$ is the tangent space
$T_{\dot{e}}(\Lambda_w), P$ acts diagonally on $\mathcal{P}\times T^P$ and the
map in the $i$-th factor is induced from the composite map
$$\mathcal{P}\times T^P\to (P/(w_i^{-1}Q_{w_i}w_i\cap P))\times T^P \simeq
 P\times_{(w_i^{-1}Q_{w_i}w_i\cap P)}T^P \to  P\times_{(w_i^{-1}Q_{w_i}w_i\cap P)}
 (T^P/T^P_{w_i}).$$
 Thus, by [BK, Lemma 6 and the discussion following it] and Lemma ~\ref{noise}, the line bundle corresponding to the divisor
 $\phi^{-1}(R')$ is $G$-equivariantly isomorphic to the line bundle
 $G\times_P\,\mathcal{M}$ over the base space  $G\times_P\,\mathcal{P}$,
 where $$\mathcal{M}=\cl_P (\chi_{w_1}-\chi_1)\boxtimes \cl_P (\chi_{w_2})\boxtimes
 \dots \boxtimes \cl_P (\chi_{w_s}).$$
 Observe that, for any $w\in W^P$,  the line bundle $\cl_P (\chi_{w})$, though defined on $P/B_L$,
 descends to a line bundle on $ P/(w^{-1}Q_{w}w\cap P)$ since the character $\chi_w$ extends
 to a character of $w^{-1}Q_{w}w\cap P$.
 Thus,
 \begin{align*}
 H^0(Y, \mathcal{O}(nR)_{|Y})^G &\simeq H^0(Y', \mathcal{O}(nR'))^G, \, \text{by Lemma}\,
 ~\ref{n6.1}\\
 &\simeq H^0(G\times_P\,\mathcal{P}, G\times_P\,\mathcal{M}^{\otimes n})^G\\
 & \simeq H^0(\mathcal{P}, \mathcal{M}^{\otimes n})^P\\
 & \simeq H^0(\mathfrak{L}, \mathcal{M}^{\otimes n}_{|\mathfrak{L}})^L,
 \end{align*}
 where
  $$\mathfrak{L}:=(L/(w_1^{-1}Q_{w_1}w_1\cap L))\times \dots
\times (L/(w_s^{-1}Q_{w_s}w_s\cap L))$$ and the last isomorphism follows from
  [BK, Theorem 15 and Remark 31(a)].

 Thus, the proposition follows from the  Borel-Weil theorem.
\end{proof}

\section{Study of codimension one cells in the Schubert varieties}

We continue to follow the notation and assumptions from Section 2. The following lemma can be found in [BP, \S2.6]. However, we
include its proof for completeness.

  \begin{lemma} \label{lemma1} For any $w\in W^P$, the stabilizer $Q_w$ of $X_w$
satisfies
  \beqn  \label{5.1} \Del (Q_w) = \Del_w ,
  \eeqn where $\Del_w := \Del\cap w(R^+_{\fl} \sqcup R^-)$ and
$R^-$ is the set of negative roots of $\fg$.

Thus,
  \beqn  \label{5.2} \Del (Q_w) = \Del \cap (ww_o^P)R^- ,
  \eeqn where $w_o^P$ is the longest element of the Weyl group $W_L$ of
$L$.  (Observe that $ww_o^P$ is the longest element $\hat{w}$ in the coset
$wW_L$.)
  \end{lemma}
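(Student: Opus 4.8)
The plan is this. Since $Q_w$ is a standard parabolic it is determined by the set $\Del(Q_w)$ of simple roots $\alpha_i$ such that the minimal parabolic $P_i\supseteq B$ is contained in $Q_w$; and $P_i\subseteq Q_w$ exactly when $P_i$ stabilises $X_w$, i.e.\ $P_i\cdot X_w=X_w$ (the reverse inclusion being automatic, as $B\subseteq P_i$). First I would record that the two descriptions \eqref{5.1} and \eqref{5.2} of $\Del(Q_w)$ coincide, by pure root combinatorics: the longest element $w_o^P$ of $W_L$ maps $R^+_\fl$ onto $R^-_\fl$ and permutes $R^+\setminus R^+_\fl$ (it is a product of simple reflections $s_j$ with $\alpha_j\in\Del(P)$, and each such $s_j$ permutes $R^+\setminus\{\alpha_j\}$ and preserves $R\setminus R_\fl$), so $w_o^P R^-=R^+_\fl\sqcup(R^-\setminus R^-_\fl)$; after applying $w$ and intersecting with $\Del$ this differs from $w(R^+_\fl\sqcup R^-)$ only by the set $wR^-_\fl$, which consists of negative roots (because $w\in W^P$ forces $wR^+_\fl\subseteq R^+$) and so is disjoint from $\Del$. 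Hence it is enough to prove \eqref{5.2}, that $\Del(Q_w)=\Del\cap(ww_o^P)R^-$.

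The main step is to reduce to the full flag variety $G/B$. Put $\hat w=ww_o^P$, the longest element of the coset $wW_L$; since $w\in W^P$ one has $\ell(wu)=\ell(w)+\ell(u)$ for every $u\in W_P$, so $wu\le\hat w$ in the Bruhat order. From the Bruhat decomposition $P=\bigsqcup_{u\in W_P}BuB$ one gets, for $\pi\colon G/B\to G/P$, that $\pi^{-1}(C^P_w)=BwP/B=\bigsqcup_{u\in W_P}C^B_{wu}$; as $\pi^{-1}(X^P_w)$ is a $(P/B)$-bundle over $X^P_w$, hence irreducible, it is the closure of $\pi^{-1}(C^P_w)$, and since $\hat w$ dominates all the $wu$ we obtain $\pi^{-1}(X^P_w)=X^B_{\hat w}$. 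Because $\pi$ is $G$-equivariant and surjective this yields $Q_w=\operatorname{Stab}_G(X^B_{\hat w})$, and we are reduced to the corresponding assertion in $G/B$: for any $v\in W$, $\Del\bigl(\operatorname{Stab}_G(X^B_v)\bigr)=\{\alpha_i\in\Del:\ell(s_iv)<\ell(v)\}=\Del\cap vR^-$. Taking $v=\hat w$ then gives \eqref{5.2}. For the assertion in $G/B$ I would use the classical behaviour of the $\Bbb P^1$-bundle $G/B\to G/P_i$: $P_i\cdot X^B_v=X^B_v$ when $\ell(s_iv)<\ell(v)$ (in which case $X^B_v$ is a union of fibres), while $P_i\cdot X^B_v=X^B_{s_iv}\supsetneq X^B_v$ when $\ell(s_iv)>\ell(v)$; hence $P_i$ stabilises $X^B_v$ iff $\ell(s_iv)<\ell(v)$, i.e.\ iff $v^{-1}\alpha_i<0$.

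The step requiring the most care is the geometric input just used — the identity $P_i\cdot X^B_v=X^B_{s_iv}$ for $\ell(s_iv)>\ell(v)$ — together with the bookkeeping of minimal coset representatives behind $\pi^{-1}(X^P_w)=X^B_{\hat w}$; both are classical but should be stated carefully. It is worth noting that one of the two inclusions can also be checked directly in $G/P$, which serves as a sanity check: the point $\dot w\in C_w$ is a smooth point of $X_w$, the tangent space $T_{\dot w}(G/P)$ decomposes as $\bigoplus_{\beta:\,w^{-1}\beta\in R^-\setminus R^-_\fl}\fg_\beta$, and $T_{\dot w}X_w=T_{\dot w}C_w$ is the subsum over $\{\alpha\in R^+:w^{-1}\alpha<0\}$. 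If $\alpha_i\notin\Del_w$, i.e.\ $w^{-1}\alpha_i\in R^+\setminus R^+_\fl$, then the value at $\dot w$ of the vector field on $G/P$ generated by a nonzero $f\in\fg_{-\alpha_i}$ is a nonzero vector of the $\fg_{-\alpha_i}$-summand of $T_{\dot w}(G/P)$, which is not part of the subsum $T_{\dot w}X_w$; so $U_{-\alpha_i}$ cannot preserve $X_w$, whence $\alpha_i\notin\Del(Q_w)$ and $\Del(Q_w)\subseteq\Del_w$.
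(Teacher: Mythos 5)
Your proof is correct and follows essentially the same route as the paper: both first reduce \eqref{5.1} to \eqref{5.2} by the root computation identifying $\Del\cap w(R^+_\fl\sqcup R^-)$ with $\Del\cap\hat wR^-$ for $\hat w=ww_o^P$, and then characterize $\Del(Q_w)$ as the set of left descents of $\hat w$. The paper implements the descent criterion directly in $G/P$ via $s_iBwP/P\subseteq BwP/P\cup Bs_i\hat wP/P$, whereas you pass through the fibration $G/B\to G/P$ and the $\Bbb P^1$-bundles $G/B\to G/P_i$ (and add a tangent-space check for one inclusion) --- the same fact in slightly different packaging.
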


  \begin{proof} We first prove equation ~\eqref{5.1}.  Observe that
    \begin{align*} w\Bigl( R^+_{\fl}\sqcup R^-\Bigr) &= w\Bigl(
R^+_{\fl}\sqcup R^-_{\fl}\sqcup (R^-\backslash R^-_{\fl})\Bigr)\\ &=
\hat{w}\Bigl( R_{\fl}\sqcup (R^-\backslash R^-_{\fl})\Bigr) .
  \end{align*} Thus,
  \begin{align} \label{5.3} \Del_w &= \Del\cap \hat{w} \Bigl( R_{\fl}\sqcup
(R^-\backslash R^-_{\fl})\Bigr) \notag \\ &= \Del\cap \hat{w} R^-,
\qquad\text{since } \hat{w}(R^+_{\fl})\subset R^- .
  \end{align} Take $\al_i\in\Del_w = \Del\cap\hat{w}R^-$.  Then,
  \begin{align*} s_i\, BwP/P &\subset (BwP/P) \cup (Bs_iwP/P)\\ &= (BwP/P)
\cup (Bs_i\hat{w}P/P) .
  \end{align*} But $s_i\hat{w} < \hat{w}$ since $(\hat{w})^{-1}\, \al_i\in
R^-$.  Hence,
  \[ s_i X_w \subset X_w .  \] This proves the inclusion $\Del (Q_w)
\supset \Del_w = \Del\cap \hat{w}R^-$.

Conversely, take $\al_i\in\Del (Q_w)$, i.e., $s_i X_w \subset X_w$.
Thus, $s_i\hat{w}<\hat{w}$ and hence $\hat{w}^{-1} \al_i\in R^-$.  This
proves the inclusion $\Del (Q_w)\subset \Del_w$ and hence equation ~\eqref{5.1} is proved.
The  equation ~\eqref{5.2} follows by combining equations ~\eqref{5.1} and ~\eqref{5.3}.
  \end{proof}

  \begin{proposition} \label{prop5.2} Let $v\overset{\beta}{\rightarrow} w \in W^P$ (i.e.,
$v,w\in W^P$, $\beta\in R^+$ such that $w = s_{\beta}v$ and $\ell
(w)=\ell (v)+1$).  Then, the (codimension one) cell $C_v$ of $X_w$ is
contained in
  $Q_w wP/P$ if and only if $ \beta\in\Del_w.$

  In particular, $\beta$ is a simple root in this case.
  \end{proposition}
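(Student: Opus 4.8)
The plan is to translate the condition ``$C_v \subset Q_w\dot{w}$'' into a purely combinatorial condition on the root $\beta$ using the Bruhat order and the explicit description of $\Del(Q_w)$ from Lemma~\ref{lemma1}. First I would observe that $Q_w\dot{w} = Y_w$ is a union of $B$-orbits, namely the Schubert cells $C_u$ with $u \in W^P$ contained in it; since $Y_w$ is $Q_w$-stable and open in $X_w$, the cell $C_v$ (of codimension one in $X_w$) lies in $Y_w$ if and only if $C_v$ meets $Q_w\dot{w}$, equivalently if and only if $\dot{v} \in Q_w\dot{w}$, i.e. $vP \in Q_w wP/P$. Writing $w = s_\beta v$ with $\ell(w) = \ell(v)+1$, the point $\dot{v}$ is obtained from $\dot{w}$ by the codimension-one degeneration corresponding to $\beta$, and the question becomes whether the parabolic $Q_w$ acting on $\dot{w}$ sweeps out a neighborhood of $\dot{v}$ inside $X_w$.

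The key step is to compute the $Q_w$-orbit of $\dot{w}$ at the level of tangent spaces / one-parameter subgroups. I would use the root-subgroup description: $Q_w$ is generated by $H$, the positive root subgroups $U_\gamma$ for $\gamma \in R^+$, and the negative root subgroups $U_{-\gamma}$ for $\gamma$ such that $s_\gamma \in Q_w$, which by Lemma~\ref{lemma1} corresponds precisely to $\gamma$ in the span of $\Del(Q_w) = \Del_w$. Conjugating to the point $\dot{w}$: the orbit directions at $\dot{w}$ inside $G/P$ coming from $Q_w$ are the images of the root spaces $\fg_\gamma$ for $\gamma \in R(Q_w)$, pushed through left translation by $w$, i.e. the relevant directions in $T_{\dot{w}}(G/P)$ are spanned by $w^{-1}\gamma$ for $\gamma$ a root of $Q_w$ (modulo $\fp$). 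On the other hand, the single extra tangent direction of $X_w$ at $\dot{w}$ beyond those of $Q_w$-within-$X_w$, heading toward the codimension-one cell $C_v$, is the root direction $-\beta$ (or $w^{-1}$ applied appropriately — one must be careful with which point one sits at). So $C_v \subset Y_w$ iff this extra direction is already in the $Q_w$-orbit direction, which happens iff $\beta$ lies in the relevant root subsystem, and then $\ell(w) = \ell(v)+1$ forces $\beta$ to be simple since $s_\beta$ must be a simple reflection for $w = s_\beta v$ to be a covering relation — actually the covering relation alone does not force $\beta$ simple, so the simplicity must come out of $\beta \in \Del_w \subset \Del$.

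Concretely, I would argue as follows. For the ``if'' direction: suppose $\beta \in \Del_w$, so $\beta = \al_i$ is simple with $s_i X_w \subset X_w$ (this is exactly what $\al_i \in \Del(Q_w)$ means by Lemma~\ref{lemma1}). Then $s_i \in Q_w$, and $s_i \dot{w} = \dot{v}$ since $w = s_i v$ with $v < w$, so $s_i w P/P = v P/P$; hence $\dot{v} = s_i \dot{w} \in Q_w \dot{w}$, and since $C_v$ is a single $B$-orbit containing $\dot{v}$ and $Q_w\dot{w} = Y_w$ is $B$-stable and contains $\dot v$, we get $C_v \subset Y_w$. For the ``only if'' direction: suppose $C_v \subset Q_w\dot{w}$, so $\dot{v} \in Q_w \dot{w}$, i.e. there is $q \in Q_w$ with $q\dot{w} = \dot{v}$. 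Projecting to $G/P$ and using the Bruhat decomposition of $Q_w$ (its Bruhat cells are indexed by $W_{Q_w}$), one sees that the map $Q_w/\,(\text{stab}) \to X_w$, $q \mapsto q\dot{w}$, has image the union of $B$-orbits $C_u$ with $u \in W_{Q_w}w W_P / W_P \cap W^P$; so $\dot v \in Q_w \dot w$ forces $v \in W_{Q_w}\, w\, W_P$, equivalently $v = u w$ in $W/W_P$ for some $u \in W_{Q_w}$. Combined with $w = s_\beta v$ and $\ell(w) = \ell(v) + 1$, a length count in $W_{Q_w}$ (noting $W_{Q_w}$ is generated by the $s_{\al_i}$, $\al_i \in \Del_w$) pins $s_\beta \in W_{Q_w}$ down to a simple reflection $s_{\al_i}$ with $\al_i \in \Del_w$, giving $\beta = \al_i \in \Del_w$.

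The main obstacle I anticipate is making the ``only if'' direction rigorous: one needs to know that the $Q_w$-orbit of $\dot w$ in $X_w$ is exactly the union of those Schubert cells $C_u$ with $u \in (W_{Q_w} w W_P) \cap W^P$, and that the minimal-length coset representative of $s_\beta v$ being obtained by a \emph{single} covering step forces $s_\beta$ itself (not just the coset) to be realized by a simple reflection in $W_{Q_w}$. This is where one has to be attentive to the interplay between $W^P$-minimal-length representatives, the parabolic $W_{Q_w} \supset$ (or rather intersecting) $W_L$, and the characterization $\ell(w) = \ell(v) + 1$ with $\beta \in R^+$; the cleaner route may be to work infinitesimally at $\dot w$ — identify $T_{\dot w}(X_w)$ inside $T_{\dot w}(G/P)$ via the roots $w^{-1}(R^- \setminus R^-_\fl$-type description$)$, identify the sub-tangent-space swept by $Q_w$ via $\Del_w$, and observe that $C_v \subset Y_w$ forces the tangent line toward $C_v$, which is the $(-\beta)$-direction at $\dot w$, to be $Q_w$-reachable — equivalently $w^{-1}\al$ for $\al \in \Del_w$ hitting it — which after a short manipulation yields $\beta \in \Del_w$ directly.
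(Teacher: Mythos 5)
Your ``if'' direction is correct and is essentially the paper's argument: $\beta=\al_i\in\Del_w=\Del(Q_w)$ gives $s_i\in Q_w$ and $s_i\dot w=\dot v$, and $B$-stability of $Q_w\dot w$ upgrades $\dot v\in Q_w\dot w$ to $C_v\subset Q_w\dot w$.

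The ``only if'' direction, however, has a genuine gap that you half-notice but do not close. Your double-coset route, even if carried out carefully (writing $w=\sigma_w u_0$, $v=\sigma_v u_0$ with lengths adding over the double coset $W_{Q_w}wW_P$), yields at best $s_\beta=\sigma_w\sigma_v^{-1}\in W_{Q_w}$, i.e.\ that $\beta$ lies in the \emph{positive root subsystem spanned by} $\Del_w$ --- not that $\beta\in\Del_w$ itself. A covering relation $\ell(s_\beta v)=\ell(v)+1$ does not force $\beta$ to be simple (already in type $A_2$: $v=s_1$, $w=s_1s_2$, $\beta=\al_1+\al_2$), and the proposition's conclusion $\beta\in\Del_w$ \emph{is} the simplicity claim; deferring it to ``$\beta\in\Del_w\subset\Del$'' is circular. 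Your fallback infinitesimal argument also cannot work as stated: $Q_w\dot w$ is open in $X_w$ (it contains $C_w$), so whether a boundary cell $C_v$ lies in this orbit is not detectable from tangent directions at $\dot w$. The paper instead argues by contradiction in a quite different way: assuming $\dot v\in Q_w\dot w$ but $\beta\notin\Del_w$, it shows $\Del_w\subseteq\Del_v$ --- for each $\al_j\in\Del_w$ one takes a reduced word of $\hat w$ starting with $s_j$ and uses the exchange property together with $\beta\neq\al_j$ (this is exactly where $\beta\notin\Del_w$ enters) to see that $\hat v$ still starts with $s_j$ --- hence $X_v$ is $Q_w$-stable by Lemma~\ref{lemma1}, and then $\dot v=q\dot w$ forces $\dot w\in Q_wX_v=X_v$, contradicting $\ell(w)>\ell(v)$. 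You would need to supply an argument of this kind (or some other mechanism ruling out non-simple $\beta$ in the span of $\Del_w$) to complete your proof.
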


  \begin{proof} We first prove the implication `$\Leftarrow$': If
$\beta\in\Del_w$, then $\beta\in\Del(Q_w)$, by Lemma \ref{lemma1}.  Thus, $\dot{v}
= s_{\beta}wP \in Q_w wP/P$.

Conversely, we prove the implication `$\Rightarrow$': Assume, if possible,
that $\dot{v} \in Q_wwP/P$ but $\beta\notin\Del_w$.  We first show that
$X_v$ is stable under $Q_w$ (assuming $\beta\notin\Del_w$).  By Lemma
\ref{lemma1}, it suffices to show that for any $\al_j\in\Del_w = \Del\cap\hat{w}
R^-$, we have $\al_j\in\Del_v$.  Since $\hat{w}^{-1}\al_j\in R^-$, we get
$s_j\hat{w} < \hat{w}$.  Take a reduced decomposition $\hat{w} =
s_js_{i_1}\cdots s_{i_d}$.  Since $v\overset{\beta}{\rightarrow}w$, then
so is
$\hat{v}\overset{\beta}{\rightarrow}\hat{w}$.  Hence, there exists a
(unique)
$1\leq p\leq d$ such that $\hat{v}=s_j s_{i_1}\cdots \hat{s}_{i_p} \cdots
s_{i_d}$ and, of course, it is a reduced decomposition.  (Here we have
used the assumption that $\beta\notin\Del_w$.)

Thus, $s_j\hat{v} < \hat{v}$, i.e., $\hat{v}^{-1}\al_j\in R^-$ and hence
$\al_j\in\Del_v$.  This proves the assertion that $X_v$ is stable under
$Q_w$.

By assumption, $\dot{v}\in Q_wwP/P$, i.e., $\dot{v}=q\dot{w}$ for some
$q\in Q_w$.  Thus, $q^{-1}\dot{v}=\dot{w}$ and hence $\dot{w}\in Q_w X_v
= X_v$, which is a contradiction.  This contradiction shows that
$\beta\in\Del_w$ and hence completes the proof of the proposition.
  \end{proof}

For $w\in W^P$, it is easy to see that the tangent space, as an
$H$-module (induced from the left multiplication of $H$ on $X_w$), is given by:
  \beqn \label{5.4} T_{\dot{w}} (X_w) \simeq \bigoplus_{\gam\in R^+\cap wR^-}
\fg_{\gam} ,
  \eeqn
  where $\fg_\gamma$ is the root space of $\fg$ corresponding to the root $\gamma$. Hence,
  \beqn \label{5.5} T_{\dot{e}} \bigl( w^{-1} X_w\bigr) \simeq \bigoplus_{\gam\in
R^-\cap w^{-1}R^+} \fg_{\gam}.
  \eeqn

The following lemma determines the tangent space along codimension one
cells.

  \begin{lemma} \label{lemma5.3} For $v\overset{\beta}{\rightarrow} w \in W^P$, the tangent
space, as an $H$-module, is given by:
  \[ T_{\dot{v}} (X_w) \simeq \Biggl( \bigoplus_{\gam\in R^+\cap vR^-}
\fg_{\gam}\Biggr) \bigoplus \fg_{-\beta} .
  \] Thus, as an $H$-module,
  \[ T_{\dot{e}} (v^{-1} X_w) \simeq \Biggl( \bigoplus_{\gam\in R^-\cap
v^{-1}R^+} \fg_{\gam}\Biggr) \bigoplus \fg_{-v^{-1}\beta} .
  \]

(Observe that $\dot{v}$ is a smooth point of $X_w$ since $X_w$ is
normal; in particular, its singular locus is of codimension at least two.)
  \end{lemma}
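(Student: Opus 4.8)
The plan is to compute the tangent space $T_{\dot v}(X_w)$ as an $H$-module directly from the local structure of the Schubert variety near the smooth point $\dot v$. Since $v \overset{\beta}{\to} w$, the cell $C_v$ has codimension one in $X_w$, so $T_{\dot v}(X_w)$ has dimension $\ell(v)+1$. The idea is that $T_{\dot v}(X_w)$ contains the tangent space $T_{\dot v}(C_v)$ to the cell (which accounts for $\ell(v)$ dimensions), plus exactly one additional weight line coming from the ``normal direction'' inside $X_w$ along which $X_v$ sits. First I would identify $T_{\dot v}(C_v)$: translating by $v$ to the base point, the cell $w^{-1}X_w$ locally looks near $\dot e$ like its own tangent space, and the already-established identity \eqref{5.5} applied with $v$ in place of $w$ gives $T_{\dot e}(v^{-1}X_v) \simeq \bigoplus_{\gamma \in R^-\cap v^{-1}R^+}\fg_\gamma$, equivalently $T_{\dot v}(X_v) \simeq \bigoplus_{\gamma\in R^+\cap vR^-}\fg_\gamma$. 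This is the span of the root vectors in the first direct summand of the claimed formula.

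Next I would locate the extra weight. The point $\dot v = s_\beta \dot w$ lies in the $B$-orbit $C_v$, and a one-parameter subgroup moving $\dot w$ toward $\dot v$ is the root $\operatorname{SL}_2$ associated to $\beta$: the curve $u_{-\beta}(t)\cdot \dot w$ (or the appropriate Bruhat-cell parametrization) sweeps out a curve in $X_w$ through $\dot v$ whose tangent direction at $\dot v$ has $H$-weight $-v^{-1}\beta$ after translating by $v$ (equivalently weight $-\beta$ before translating, once one accounts for the conjugation $v H v^{-1} = H$). More carefully: the Bruhat cell $C_w = BwP/P$ near its boundary point $\dot v$ is locally a product of $C_v$ with a transverse slice, and that slice is the image of the root subgroup $U_\beta$ (since $w = s_\beta v$ with $\ell(w) = \ell(v)+1$, the reduced word for $\hat w = \hat v$ extended by $s_\beta$ exhibits $\beta$ as the unique ``new'' inversion). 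Translating the whole picture to $\dot e$ by $v^{-1}$, this transverse line has $H$-weight $-v^{-1}\beta$, giving the summand $\fg_{-v^{-1}\beta}$; untranslating gives $\fg_{-\beta}$ at $\dot v$.

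Then I would assemble the two pieces: since $\dim T_{\dot v}(X_w) = \ell(v)+1 = |R^+\cap vR^-| + 1$, and we have exhibited $\ell(v)+1$ independent weight directions inside $T_{\dot v}(X_w)$ (the $\fg_\gamma$ for $\gamma\in R^+\cap vR^-$ together with $\fg_{-\beta}$), these must exhaust the tangent space. One subtlety to check is that $-\beta \notin R^+\cap vR^-$ so that the sum is genuinely direct of the stated size; this holds because $\beta\in R^+$ forces $-\beta\in R^-$, while $R^+\cap vR^-\subset R^+$. The second displayed formula follows immediately by applying $\operatorname{Ad}(v^{-1})$, which sends $T_{\dot v}(X_w)$ to $T_{\dot e}(v^{-1}X_w)$ and $\fg_\gamma$ to $\fg_{v^{-1}\gamma}$. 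The main obstacle I anticipate is making the ``transverse slice'' argument rigorous, i.e., verifying cleanly that the extra tangent weight is exactly $-\beta$ and not some other root in $R^-\setminus(vR^-)$; I would handle this by choosing a reduced decomposition $\hat w = s_{i_1}\cdots s_{i_d}$ with $\hat v = s_{i_1}\cdots\widehat{s_{i_p}}\cdots s_{i_d}$ (as in the proof of Proposition \ref{prop5.2}) and reading off the inversion sets, so that the single root in $R^+\cap (wR^- \setminus vR^-)$ is identified explicitly, after which the weight computation is bookkeeping.
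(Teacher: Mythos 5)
Your overall architecture matches the paper's: the summand $\bigoplus_{\gam\in R^+\cap vR^-}\fg_{\gam}$ comes from $T_{\dot{v}}(X_v)\subset T_{\dot{v}}(X_w)$ via \eqref{5.4} applied to $v$, and the proof closes by comparing dimensions, using that $\dot{v}$ is a smooth point of $X_w$ so that $\dim T_{\dot{v}}(X_w)=\ell(v)+1$, together with the observation that $-\beta\notin R^+\cap vR^-$. The gap is in the one step that carries the real content: exhibiting the extra weight line $\fg_{-\beta}$ inside $T_{\dot{v}}(X_w)$. Neither of the two mechanisms you name does this as stated. The curve $u_{-\beta}(t)\dot{w}$ is constant: $U_{-\beta}\dot{w}=wU_{-w^{-1}\beta}\dot{e}=\dot{w}$, because $-w^{-1}\beta=v^{-1}\beta\in R^+$ (as $\ell(s_\beta v)>\ell(v)$), so $U_{-w^{-1}\beta}\subset B\subset P$ fixes $\dot{e}$. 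Likewise the ``transverse slice given by the image of $U_\beta$'' through $\dot{v}$ is a single point: $U_\beta\dot{v}=vU_{v^{-1}\beta}\dot{e}=\dot{v}$, again since $v^{-1}\beta\in R^+$. So neither object produces a tangent direction at $\dot{v}$, and the fallback you propose (reduced decompositions and inversion bookkeeping) only recomputes the weights of the cells $C_w$ and $C_v$ at their base points; it does not by itself identify the tangent space of the closure $X_w$ at the boundary point $\dot{v}$.

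The step can be repaired by showing that the entire curve $U_{-\beta}\dot{v}$ lies in $X_w$, which is what the paper does: since $\beta\in R^+$ one has $U_\beta\dot{w}\subset B\dot{w}\subset C_w$, and $H\dot{w}=\dot{w}$ and $U_{-\beta}\dot{w}=\dot{w}$ give $U_\beta HU_{-\beta}\dot{w}\subset X_w$; by $SL(2)$-theory $\overline{U_\beta HU_{-\beta}}\supset U_{-\beta}s_\beta H$, whence $U_{-\beta}\dot{v}=U_{-\beta}s_\beta\dot{w}\subset X_w$ and therefore $\fg_{-\beta}\subset T_{\dot{v}}(X_w)$. (Equivalently: $\overline{U_\beta\dot{w}}$ is the $\Bbb P^1$ joining $\dot{w}$ to $\dot{v}=s_\beta\dot{w}$, and near $\dot{v}$ it is parametrized by $U_{-\beta}$ --- this is exactly why the extra weight is $-\beta$ rather than $+\beta$.) With this replacement for your middle step, the rest of your argument (directness of the sum and the dimension count, and the translation by $v^{-1}$ for the second display) is correct.
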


  \begin{proof} Since $\dot{v}\in X_v \subset X_w$, by ~\eqref{5.4},
  \beqn \label{5.6} \bigoplus_{\gam\in R^+\cap vR^-} \fg_{\gam}\subset T_{\dot{v}}
(X_w) .
  \eeqn

For any root $\al\in R$, let $U_{\al} := \Exp (\fg_{\al})\subset G$ be the
corresponding 1-dimensional unipotent group.  Then,
  \[ U_{\beta}U_{-\beta}\dot{w} = U_{\beta}wU_{-w^{-1}\beta}\dot{e} =
U_{\beta}\dot{w}\subset X_w \qquad (\text{since }w^{-1}\beta\in R^- ).
  \] Hence, $U_{\beta}HU_{-\beta}\dot{w}\subset X_w$.  But, from the
$SL(2)$-theory, $\overline{U_{\beta}HU_{-\beta}}\supset
U_{-\beta}s_{\beta}H$.  In particular,
  \[ U_{-\beta}s_{\beta}H\dot{w} \subset X_w, \text{ i.e., }
U_{-\beta}\dot{v} \subset X_w .
  \] This proves that
  \beqn \label{5.7} \fg_{-\beta} \subset T_{\dot{v}} (X_w).
  \eeqn

Combining ~\eqref{5.6}--\eqref{5.7}, we get
  \[\bigl( \bigoplus_{\gam\in R^+\cap vR^-} \fg_{\gam}\bigr) \bigoplus
\fg_{-\beta}\subset T_{\dot{v}} (X_w) .
  \] But, both the sides are of the same dimension $\ell (v)+1$, proving
the lemma.
  \end{proof}

As above, let $P$ be any standard parabolic subgroup of $G$ and let $x_P
\in \fh'=\fh\cap [\fg,\fg]$ be the
element defined by \begin{align*}
  \al_i(x_P) &= 0, \qquad \text{for all the simple roots $\al_i\in\Del
(P)$}\\
  &=1, \qquad\text{for all the  simple roots $\al_i\notin \Del (P)$}.
\end{align*}
Then, $x_P$ is in the center of the Lie algebra $\mathfrak{l}$.

Set $m_o=\theta (x_P)$, where $\theta$ is the highest root of $\fg$.
(Observe that $m_o \leq 2$ for any maximal parabolic subgroup $P$ of a
classical group $G$.)  Define a
decomposition of $T_{\dot{e}} (G/P)$ as a direct sum of $L$-submodules as follows.
First decompose $T_{\dot{e}}(G/P)$ as a direct sum of $H$-eigenspaces
(induced from  the canonical action of $H$ on $G/P$):
  \[ T_{\dot{e}} (G/P) = \bigoplus_{\beta\in R^+\backslash R^+_{\fl}}
T_{\dot{e}} (G/P)_{-\beta} .
    \] For any $1\leq j\leq m_o$, define
  \[ V_j = \bigoplus_{ \substack{\beta\in R^+\backslash R^+_{\fl}:\\ \beta
(x_P)=j} } T_{\dot{e}} (G/P)_{-\beta} .
  \] Clearly, each $V_j$ is a $L$-submodule of $T_{\dot{e}}(G/P)$ and we have the
decomposition (as $L$-modules)
  \[ T_{\dot{e}} (G/P) = \bigoplus^{m_o}_{j=1} V_j .  \]

Define an increasing filtration of $T_{\dot{e}}(G/P)$ by $P$-submodules given by
  \[ \cf_1 \subset \cf_2 \subset \cf_3 \subset \cdots \subset \cf_{m_o} =
T_{\dot{e}}(G/P) ,
  \] where
  \[ \cf_d = \bigoplus^d_{j=1} V_j .  \]

  For any subvariety $Z \subset G/P$ such that $\dot{e}$ is a
smooth point of $Z$, define
  \[ V_j(Z) := V_j \cap T_{\dot{e}}(Z) .
  \] Also, we get the increasing filtration $\cf_j(Z)$ of $T_{\dot{e}}(Z)$
given by
  \[ \cf_j(Z) := \cf_j\cap T_{\dot{e}}(Z).
  \] We set, for $1\leq j\leq m_o$,
  \[ d_j(Z) = \text{dimension of } V_j(Z) .
  \] (Observe that $d_0 (Z) =0$ since $T_{\dot{e}}(Z)\subset
T_{\dot{e}}(G/P) \simeq \bigoplus_{\al\in R^+\backslash R^+_{\fl}}
\fg_{-\alpha}$.)

Let $\mathfrak{z}(L)$ be the center of $L$. If $Z$, as above, is  $\mathfrak{z}(L)$-stable,
we get the decomposition (as  $\mathfrak{z}(L)$-modules)
    \[ T_{\dot{e}}(Z) = \bigoplus^{m_o}_{j=1} V_j(Z) .
  \]
  \begin{theorem} \label{degree} For any $v\overset{\beta}{\rightarrow} w$ in $W^P$ such
that $\dot{v}$ is not in the $Q_w$-orbit of $\dot{w}$, there exists
$1\leq j\leq m_o$ such that
    \beqn\label{5.9} d_j(w^{-1} X_w) \neq d_j (v^{-1} X_w).
  \eeqn
    \end{theorem}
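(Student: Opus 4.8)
The plan is to translate both tangent spaces to the base point $\dot e$ and compare their multisets of $H$-weights. By~\eqref{5.5} the $H$-weights of $T_{\dot e}(w^{-1}X_w)$ are the roots in $R^-\cap w^{-1}R^+$, and by Lemma~\ref{lemma5.3} those of $T_{\dot e}(v^{-1}X_w)$ are the roots in $(R^-\cap v^{-1}R^+)\cup\{-v^{-1}\beta\}$; in particular $v^{-1}\beta\in R^+\setminus R^+_{\fl}$, since $\fg_{-v^{-1}\beta}$ sits inside $T_{\dot e}(G/P)=\bigoplus_{\alpha\in R^+\setminus R^+_{\fl}}\fg_{-\alpha}$. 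Both $w^{-1}X_w$ and $v^{-1}X_w$ are $H$-stable (as $v,w$ normalize $H$) and smooth at $\dot e$, so each $T_{\dot e}$ is a sum of root spaces $\fg_\gamma$ with $-\gamma\in R^+\setminus R^+_{\fl}$, and unwinding the definitions of $V_j$ and $d_j$ gives $d_j(Z)=\#\{\gamma:\fg_\gamma\subset T_{\dot e}(Z),\ (-\gamma)(x_P)=j\}$. Since $(-\gamma)(x_P)\in\{1,\dots,m_o\}$ for such $\gamma$, this yields $\sum_{j=1}^{m_o}j\,d_j(Z)=-\sum_\gamma\gamma(x_P)$, the sum over the tangent weights $\gamma$ of $Z$ at $\dot e$. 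Hence it suffices to show $\sum_j j\,d_j(w^{-1}X_w)\neq\sum_j j\,d_j(v^{-1}X_w)$, i.e.\ that the sums of the tangent weights pair differently with $x_P$.

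First I would use the hypothesis to force $\beta$ to be a \emph{non-simple} root. Since $B\subseteq Q_w$, the orbit $Y_w=Q_w\dot w$ is $B$-stable, so $\dot v\notin Y_w$ is equivalent to $C_v=B\dot v\not\subseteq Y_w$, which by Proposition~\ref{prop5.2} means $\beta\notin\Delta_w$. On the other hand $w=s_\beta v$ gives $w^{-1}\beta=-v^{-1}\beta\in R^-$ (using $v^{-1}\beta\in R^+$), so $\beta\in wR^-\subseteq w(R^+_{\fl}\sqcup R^-)$; were $\beta$ simple this would give $\beta\in\Delta\cap w(R^+_{\fl}\sqcup R^-)=\Delta_w$, a contradiction. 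So $\beta$ is non-simple, and therefore $\langle\rho,\beta^{\vee}\rangle\geq 2$ (it is the height of the positive coroot $\beta^{\vee}$, which is not a simple coroot).

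Next I would run the weight computation. Using the standard identity $\sum_{\gamma\in R^+\cap uR^-}\gamma=\rho-u\rho$ for $u\in W$ (and applying $u^{-1}$), the sum of the tangent weights of $w^{-1}X_w$ at $\dot e$ is $w^{-1}\rho-\rho$, and that of $v^{-1}X_w$ is $v^{-1}\rho-\rho-v^{-1}\beta$. Since $w^{-1}\rho=v^{-1}s_\beta\rho=v^{-1}\rho-\langle\rho,\beta^{\vee}\rangle\,v^{-1}\beta$, the difference of these two sums is $(1-\langle\rho,\beta^{\vee}\rangle)\,v^{-1}\beta$. Pairing with $x_P$ and combining with the first paragraph,
\[
\sum_{j=1}^{m_o}j\bigl(d_j(v^{-1}X_w)-d_j(w^{-1}X_w)\bigr)=(1-\langle\rho,\beta^{\vee}\rangle)\,(v^{-1}\beta)(x_P).
\]
Here $1-\langle\rho,\beta^{\vee}\rangle\leq-1$ by the previous step, while $(v^{-1}\beta)(x_P)\geq 1$ because $v^{-1}\beta\in R^+\setminus R^+_{\fl}$ involves, with positive coefficient, at least one simple root outside $\Delta(P)$, and $x_P$ pairs to $1$ with each such simple root and to $0$ with those in $\Delta(P)$. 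Thus the right-hand side is a negative integer, in particular nonzero, so $d_j(v^{-1}X_w)\neq d_j(w^{-1}X_w)$ for some $1\leq j\leq m_o$, which is the assertion.

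I expect the main obstacle to be the reduction in the second paragraph. When $\beta$ is simple one checks directly that $R^-\cap w^{-1}R^+=(R^-\cap v^{-1}R^+)\cup\{-v^{-1}\beta\}$, so $T_{\dot e}(w^{-1}X_w)=T_{\dot e}(v^{-1}X_w)$ and all the $d_j$ coincide; hence the hypothesis ``$\dot v$ is not in the $Q_w$-orbit of $\dot w$'' is indispensable and must be converted, precisely via Proposition~\ref{prop5.2}, into the non-simplicity of $\beta$. Once that is in hand, the remaining steps are a routine computation with the root-sum identity.
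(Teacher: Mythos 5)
Your proof is correct and follows essentially the same route as the paper's: both pair the sum of the tangent weights with $x_P$ via $\sum_j j\,d_j$, use the identity $\sum_{\delta\in R^+\cap u^{-1}R^-}\delta=\rho-u^{-1}\rho$ together with $w^{-1}\rho=v^{-1}\rho-\langle\rho,\beta^\vee\rangle v^{-1}\beta$, and invoke Proposition~\ref{prop5.2} to conclude that $\beta$ is non-simple, hence $\langle\rho,\beta^\vee\rangle\geq 2$ while $(v^{-1}\beta)(x_P)\geq 1$. The only (cosmetic) difference is that you compute the difference of the two weighted sums directly, whereas the paper routes through $d_j(v^{-1}X_v)$ and argues by contradiction.
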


  \begin{proof} Let us set $\al := v^{-1}\beta\in R^+$.  For any $1\leq
j\leq m_o$, we get (by Lemma \ref{lemma5.3} and equation ~\eqref{5.5} applied to $v$)
    \beqn \label{5.8} d_j(v^{-1} X_w) = d_j (v^{-1}X_v) + \del_{j,\al (x_P)} .
  \eeqn
  By the equation ~\eqref{5.5}, the roots in $T_{\dot{e}}(w^{-1}X_w)$ are
precisely $R^-\cap w^{-1}R^+$ $\Bigl($i.e., $T_{\dot{e}} (w^{-1}X_w)
\simeq \bigoplus_{\gam\in R^-\cap w^{-1}R^+}\fg_\gamma\Bigr)$.  Set
  \[ \Phi_{w^{-1}} := R^+\cap w^{-1}R^- .
  \] Then, as is well known,
  \[ \sum_{\del\in\Phi_{w^{-1}}} \del = \rho -w^{-1}\rho ,
  \] where $\rho$ is half the sum of all the positive roots.

Thus (abbreviating $d_j(w^{-1}X_w)$ by $d_j$ and $d_j(v^{-1}X_v)$ by
$d'_j$),
  \beqn \label{5.10} (\rho -w^{-1}\rho )(x_P) = d_1 +2d_2 +\cdots + m_od_{m_o} .
  \eeqn Similarly,
   \beqn \label{5.11} (\rho -v^{-1}\rho )(x_P) = d'_1 +2d'_2 +\cdots + m_od'_{m_o} .
  \eeqn Of course,
  \beqn \label{5.12} d_1+d_2+\cdots +d_{m_o} = \ell (w),
  \eeqn and
  \beqn \label{5.13} d'_1+d'_2+\cdots +d'_{m_o} = \ell (v) = \ell (w)-1.
  \eeqn

Now,
  \begin{align} \label{5.14} (\rho -w^{-1}\rho )(x_P) - (\rho -v^{-1}\rho )(x_P) &=
(v^{-1}\rho -w^{-1}\rho )(x_P) \notag\\
 &= (v^{-1}\rho -s_{\al}v^{-1}\rho )(x_P), \text{ since}\,\, w=vs_{\al}\notag\\
 &= \ip<v^{-1}\rho ,\al^\vee>\, \al (x_P)\notag\\
 &= \ip<\rho , (v\al)^\vee>\, \al (x_P)\notag\\
 &= \ip<\rho ,\beta^\vee>\, \al (x_P) .
    \end{align}

    On the other hand, by ~\eqref{5.10}--\eqref{5.13},
  \begin{align} \label{5.15} (\rho -w^{-1}\rho )(x_P) &- (\rho -v^{-1}\rho )(x_P)
\notag \\
 &= (d_1-d'_1) + 2(d_2-d'_2) + \cdots + m_o(d_{m_o}-d'_{m_o}) \notag \\
 &= 1 + (d_2-d'_2) + 2(d_3-d'_3) + \cdots + (m_o-1)(d_{m_o}-d'_{m_o}) .
   \end{align} Combining ~\eqref{5.14}--\eqref{5.15}, we get
  \beqn \label{5.16} 1 + (d_2-d'_2) + 2(d_3-d'_3) + \cdots + (m_o-1)(d_{m_o}-d'_{m_o})
= \ip<\rho ,\beta^\vee>\, \al (x_P) .
  \eeqn

If ~\eqref{5.9} were false, we would get
  \[ d_j = d_j( v^{-1} X_w), \qquad\text{ for all } 1\leq j\leq m_o,
  \] i.e., by the identity ~\eqref{5.8}, we would get
  \[ d_j = d'_j\quad\text{ for all $j\neq \al (x_P)$ and } \quad d_{\al
(x_P)} = d'_{\al (x_P)} +1 .
  \] Combining this with the identity ~\eqref{5.16}, we would get
  \begin{align} \label{5.17} 1 +\al (x_P)-1 &= \ip<\rho ,\beta^\vee>\, \al (x_P),
\text{
i.e., }\notag\\ \al (x_P) &= \ip<\rho ,\beta^\vee>\, \al (x_P).
  \end{align}

But, by the definition of $\beta$, it is easy to see that if $\beta$ were
a simple root, then $\beta\in\Del_w$.  Since, by assumption, $\dot{v}$ is
not in the $Q_w$-orbit of $\dot{w}$, this contradicts Proposition \ref{prop5.2}.
Hence, $\beta$ is not a simple root and this contradicts the identity ~\eqref{5.17}.
(Observe that $\al (x_P)\neq 0$, since $v,w\in W^P$ and $w=vs_{\al}$.)
This contradiction arose because we assumed that ~\eqref{5.9} is false.  This
proves ~\eqref{5.9} and hence the theorem is proved.
  \end{proof}

\section{Main theorem and its proof}

We follow the notation and assumptions from Section 5. In particular, let
$w_1, \dots , w_s\in W^P$ be such that identity
~\eqref{multiplicity} is satisfied for some $d>0$. We assume further that the $s$-tuple
$(w_1, \dots , w_s)$ is Levi-movable. This will be our assumption through this section.

\begin{proposition} \label{prop6.1} Under the above assumption, there exists a closed subset $A$ of
$\mathcal{Z}$ such that
\begin{equation}\label{(H)}
\mathcal{Z}\setminus \mathcal{Y}\subseteq R\cup A,\
\codim(A,\mathcal{Z})\geq 2.
\end{equation}
\end{proposition}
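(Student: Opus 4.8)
The plan is to take $A$ to be the closure in $\mathcal{Z}$ of $(\mathcal{Z}\setminus\mathcal{Y})\setminus R$ (note $\mathcal{Z}\setminus\mathcal{Y}$ is closed in $\mathcal{Z}$, since $\mathcal{Y}$ is open in $\mathcal{Z}$). Then $\mathcal{Z}\setminus\mathcal{Y}\subseteq R\cup A$ automatically, and $\codim(A,\mathcal{Z})\ge 2$ will hold provided every irreducible component of $\mathcal{Z}\setminus\mathcal{Y}$ of codimension one in $\mathcal{Z}$ is contained in $R$. So the proof reduces to locating the codimension-one part of $\mathcal{Z}\setminus\mathcal{Y}$ and showing it lies in $R$.

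First I would pin down that codimension-one part. For fixed $j$, the locus in $\mathcal{Z}$ where $x_j\notin Y_{w_j}$ is where $x_j\in Z_{w_j}\setminus Y_{w_j}=Z_{w_j}\cap(X_{w_j}\setminus Y_{w_j})$. Since $X_{w_j}\setminus Y_{w_j}$ is a closed $Q_{w_j}$-stable subset it is a union of Schubert varieties, whose codimension-one components in $X_{w_j}$ are precisely the $X_v$ with $v\overset{\beta}{\rightarrow}w_j$ and $\beta\notin\Delta_{w_j}$ (Proposition~\ref{prop5.2}), all other cells having codimension $\ge 2$; by normality of $X_{w_j}$ each such $X_v$ meets the smooth locus $Z_{w_j}$ in a dense open subset. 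Transporting this through the fibre-product description of $\mathcal{Z}$, with a routine dimension count based on \eqref{dim0'}, one obtains a closed $A_0\subseteq\mathcal{Z}$ with $\codim(A_0,\mathcal{Z})\ge 2$ together with, for each pair $(j,v)$ as above, an irreducible codimension-one closed subvariety $\mathcal{Z}_{j,v}$ of $\mathcal{Z}$ — the closure of $\{\mathfrak a:x_j\in C_v\}$ — such that $\mathcal{Z}\setminus\mathcal{Y}\subseteq A_0\cup\bigcup_{j,v}\mathcal{Z}_{j,v}$. It then suffices to prove $\mathcal{Z}_{j,v}\subseteq R$ for each $(j,v)$.

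Fix such a pair $(j,v)$; since $R$ is closed it is enough to show that the dense open subset $U_{j,v}\subseteq\mathcal{Z}_{j,v}$ of those $\mathfrak a=([g_1,x_1],\dots,[g_s,x_s])$ with $x_j\in C_v$ and $x_i\in C_{w_i}$ for all $i\ne j$ is contained in $R$. Fix $\mathfrak a\in U_{j,v}$, set $x:=g_ix_i$, and translate $x$ to $\dot e$ by $G$-equivariance. Writing the translated group elements as $g_i=p_iw_i^{-1}b_i^{-1}$ for $i\ne j$ and $g_j=p_jv^{-1}b_j^{-1}$ with $p_i\in P$, $b_i\in B$, and using that $x$ is a smooth point of every $g_iX_{w_i}$ (by normality, cf.\ Lemma~\ref{lemma5.3}, and since $x\in g_iC_{w_i}$ for $i\ne j$) together with \eqref{5.5} and Lemma~\ref{lemma5.3}, one identifies (via Lemma~\ref{transverse})
$$\Ker(D\pi)_{\mathfrak a}\ \simeq\ \bigcap_{i=1}^s T_x(g_iZ_{w_i})\ \simeq\ \Bigl(\bigcap_{i\ne j}p_iT_{\dot e}(\Lambda_{w_i})\Bigr)\cap p_jT_{\dot e}(v^{-1}X_{w_j}).$$
By Lemma~\ref{transverse}, $\mathfrak a\in R$ exactly when this intersection is nonzero, so the task is to prove it is nonzero for \emph{every} choice of $p_1,\dots,p_s\in P$ — this is the ``essential non-transversality along $g_j(Z_{w_j}\setminus Y_{w_j})$'' announced in the Introduction.

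For this I would use the $x_P$-grading. Set $W_i:=p_iT_{\dot e}(\Lambda_{w_i})$ ($i\ne j$) and $W_j:=p_jT_{\dot e}(v^{-1}X_{w_j})$. Both $T_{\dot e}(\Lambda_{w_i})$ and $T_{\dot e}(v^{-1}X_{w_j})$ are $\mathfrak{z}(L)$-stable, hence graded for $T_{\dot e}(G/P)=\bigoplus_{e=1}^{m_o}V_e$, while the filtration $\cf_d=\bigoplus_{e\le d}V_e$ is $P$-stable; therefore $\cf_d\cap W_i=p_i\bigl(\bigoplus_{e\le d}V_e(\Lambda_{w_i})\bigr)$ has dimension $\sum_{e\le d}d_e(w_i^{-1}X_{w_i})$, and similarly for $W_j$ with $v^{-1}X_{w_j}$, independently of the $p_i$. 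Levi-movability gives, by [BK], the graded identities $\sum_{i=1}^s d_e(w_i^{-1}X_{w_i})=(s-1)\dim V_e$ for every $e$, so a codimension count inside $\cf_d$ yields $\dim\bigl(\bigcap_i\cf_d\cap W_i\bigr)\ge\sum_i\dim(\cf_d\cap W_i)-(s-1)\dim\cf_d=g(d)$, where $g(d):=\sum_{e\le d}\bigl(d_e(v^{-1}X_{w_j})-d_e(w_j^{-1}X_{w_j})\bigr)$; since $\bigcap_i(\cf_d\cap W_i)\subseteq\bigcap_iW_i$, it remains to find $d_0$ with $g(d_0)\ge 1$. Writing $\delta_e:=d_e(w_j^{-1}X_{w_j})-d_e(v^{-1}X_{w_j})$, one has $\sum_e\delta_e=0$ (both profiles sum to $\ell(v)+1=\ell(w_j)$), while identity \eqref{5.16} from the proof of Theorem~\ref{degree} — together with $\beta$ non-simple (Proposition~\ref{prop5.2}, since $\dot v\notin Q_{w_j}\dot w_j$), so $\langle\rho,\beta^\vee\rangle\ge 2$, and with \eqref{5.8} — gives $\sum_e(e-1)\delta_e=\al(x_P)\bigl(\langle\rho,\beta^\vee\rangle-1\bigr)\ge 1$, where $\al=v^{-1}\beta$. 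Abel summation turns $\sum_e\delta_e=0$ and $\sum_e(e-1)\delta_e\ge 1$ into $\sum_{d=1}^{m_o-1}\bigl(\sum_{e\le d}\delta_e\bigr)\le -1$, hence $\sum_{e\le d_0}\delta_e<0$, i.e.\ $g(d_0)\ge 1$, for some $d_0$; so $\Ker(D\pi)_{\mathfrak a}\ne 0$, $\mathfrak a\in R$, and $\mathcal{Z}_{j,v}=\overline{U_{j,v}}\subseteq R$. I expect the main obstacle to be exactly this last step — proving non-transversality for \emph{all} $p_i$ simultaneously — whose resolution relies on Theorem~\ref{degree} not as a black box but through the quantitative identity \eqref{5.16}; the bookkeeping in the second paragraph, while routine, also needs care.
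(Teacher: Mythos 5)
Your proof is correct and follows the paper's strategy almost step for step: the same reduction to the codimension-one strata $\{x_j\in C_v\}$ with $v\overset{\beta}{\rightarrow}w_j$ and $\dot v\notin Y_{w_j}$ (all other strata of $\mathcal{Z}\setminus\mathcal{Y}$ having codimension $\geq 2$), the same translation of non-transversality into $\Ker(D\pi)_{\mathfrak a}\neq 0$ via Lemma~\ref{transverse}, and the same use of the $P$-stable filtration $\cf_\bullet$ together with the graded dimension identity supplied by Levi-movability. The only genuine divergence is the last combinatorial step. The paper quotes Theorem~\ref{degree} as a black box (some $d_j(w^{-1}X_w)\neq d_j(v^{-1}X_w)$) and combines it with the monotonicity statement \eqref{6.4}, $\dim\cf_j(w^{-1}Z_w)\le \dim\cf_j(u^{-1}Z_w)$ (a small geometric semicontinuity argument), to produce a level $j_o$ at which the filtration dimension strictly increases, contradicting transversality. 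You instead re-open the proof of Theorem~\ref{degree}, extract from \eqref{5.16} and \eqref{5.8} the signed identity $\sum_e (e-1)\delta_e=\al(x_P)\bigl(\langle\rho,\beta^\vee\rangle-1\bigr)\ge 1$ for your $\delta_e$, and obtain the strict jump $\sum_{e\le d_0}\delta_e\le -1$ by Abel summation, bypassing \eqref{6.4} entirely. Both routes rest on exactly the same root-theoretic computation; yours is marginally more self-contained at that point, while the paper's packaging keeps Theorem~\ref{degree} as a clean standalone statement and hides the bookkeeping in \eqref{6.4}. Either way the argument is sound.
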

\begin{proof}
 Let $\mathcal{Z}^o:=\mathcal{Z}\setminus R.$ It suffices to show that for
 $u_1, \dots ,  u_s\in W^P$ such that $u_i=w_i$ for all $i\neq i_o$ and $u_{i_o}\to w_{i_o}$
 for some $1\leq i_o\leq s$ and $\dot{u}_{i_o}\notin Y_{w_{i_o}}$,
 $$\mathcal{Z}^o\cap (\mathfrak{C}_{u_1}\times \dots  \times \mathfrak{C}_{u_s})
 =\emptyset.$$

 Since  the $s$-tuple
$(w_1, \dots , w_s)$ is Levi-movable, there exist $l_1, \dots , l_s\in L$ such that
the standard quotient map
$$T_{\dot{e}}(G/P) \to \bigoplus_{i=1}^s \,T_{\dot{e}}(G/P)/T_{\dot{e}}(l_i\Lambda_{w_i})$$
is an isomorphism. Hence, the eigenspaces corresponding to any eigenvalue $1\leq
j\leq m_o$
under the action of $x_P$ also are isomorphic, i.e.,
$$V_j(G/P)\simeq \bigoplus_{i=1}^s\, V_j(G/P)/V_j(l_i\Lambda_{w_i}),$$
where $V_j$ is as in Section 7.
 (Here we have used the fact that $l_i\Lambda_{w_i}$ is $\mathfrak{z}(L)$-stable.)
 In particular, since the filtration $\mathcal{F}_j$ of $T_{\dot{e}}(G/P)$ is $P$-stable,
 for any $p_1,\dots, p_s\in P$,
 \beqn \label{6.1} \dim \mathcal{F}_j=\sum_{i=1}^s (\dim \mathcal{F}_j -
 \dim (\mathcal{F}_j(l_i\Lambda_{w_i}))
 =\sum_{i=1}^s (\dim \mathcal{F}_j -
 \dim (\mathcal{F}_j(p_i\Lambda_{w_i})).
 \eeqn

 If nonempty, take $\mathfrak{a}=([g_1,x_1], \dots , [g_s,x_s])\in
 \mathcal{Z}^o\cap (\mathfrak{C}_{u_1}\times \dots  \times \mathfrak{C}_{u_s})$,
 for $g_i\in G$ and $x_i\in {C}_{u_i}$. In particular,
 $g_1x_1=\dots =g_sx_s.$ Let us denote this common element by $gP$. From the
 $G$-equivariance, we can assume that $g=e$. By Lemma \ref{transverse},
 the quotient map
 $$T_{\dot{e}}(G/P) \to \bigoplus_{i=1}^s \,T_{\dot{e}}(G/P)/T_{\dot{e}}
 (p_iu_i^{-1}Z_{w_i})$$
 is an isomorphism, where $p_i\in P$ is any element chosen such that
 $g_i\in p_iu_i^{-1}B$.
 In particular, for any $j$, the quotient map
  $$\mathcal{F}_j\to \bigoplus_{i=1}^s \mathcal{F}_j/
 (T_{\dot{e}}
 (p_iu_i^{-1}Z_{w_i})\cap\mathcal{F}_j)$$
 is injective.
  Thus, for any $j$,
  \beqn \label{6.2} \dim \mathcal{F}_j\leq\sum_{i=1}^s (\dim \mathcal{F}_j -
 \dim (\mathcal{F}_j(p_iu_i^{-1}Z_{w_i}))
 =\sum_{i=1}^s (\dim \mathcal{F}_j -
 \dim (\mathcal{F}_j(u_i^{-1}Z_{w_i})).
 \eeqn
 Considering the image of $T_{\dot{e}}(g^{-1}Z_w)$ in $T_{\dot{e}}(G/P)/\mathcal{F}_j,$
 for $gP\in Z_w$, it is easy to see that, for any $u,w\in W^P$ such that $\dot{u}\in Z_w$
 and any $j$, we have
 \beqn \label{6.4} \dim \mathcal{F}_j(w^{-1}Z_w) \leq \dim
 \mathcal{F}_j(u^{-1}Z_w).
 \eeqn

 Now, let $j_o$ be an integer such that
 \beqn \label{6.5} \dim \mathcal{F}_{j_o}(w_i^{-1}Z_{w_i}) \neq \dim
 \mathcal{F}_{j_o}(u_i^{-1}Z_{w_i})\,\,\text {for}\, i=i_o.
 \eeqn
This is possible by virtue of Theorem \ref{degree}.
This contradicts the inequality ~\eqref{6.2} for $j=j_o$ (by using
~\eqref{6.1}, ~\eqref{6.4}--\eqref{6.5}).
Hence the proposition is proved.
\end{proof}

Recall the definition of the deformed product $\odot_0$ in the singular cohomology
$H^*(G/P, \Bbb Z)$ from [BK, Definition 18]. We now come to our main theorem.

\begin{theorem} \label{main} Let $G$ be any connected reductive group and let $P$ be any
standard parabolic subgroup. Then, for any $w_1, \dots, w_s\in W^P$ such that
$$[X_{w_1}]\odot_0\dots \odot_0 [X_{w_s}]=[X_e]\in H^{*}(G/P),$$
we have (for any $n\geq 1$)
\beqn \label{7.6} \dim \Hom_L(V_L(n\chi_1), V_L(n\chi_{w_1})\otimes \dots \otimes V_L(n\chi_{w_s}))=1,
\eeqn
where $\chi_w$ is defined by identity ~\eqref{eqn5}. Equivalently, we have (for the commutator
subgroup $L^{ss}:=[L,L]$):
\beqn \label{7.7} \dim \bigl(\bigl[V_L(n\chi_{w_1})\otimes \dots \otimes
 V_L(n\chi_{w_s})\bigr]^{L^{ss}}\bigr)=1,
\,\forall \, n\geq 1.
\eeqn
\end{theorem}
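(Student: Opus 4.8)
The plan is to derive the theorem by assembling the results of Sections~3--7; I first note that the hypothesis already supplies the standing assumptions of those sections together with the extra input of Corollary~\ref{cor4.3}. Indeed, since $\odot_0$ is a graded product on $H^*(G/P)$, the equality $[X_{w_1}]\odot_0\cdots\odot_0[X_{w_s}]=[X_e]$ forces the dimension condition~\eqref{dim0'}; and by the description of the structure constants of $\odot_0$ in terms of Levi-movability [BK], the coefficient of $[X_e]$ in the $\odot_0$-product is $0$ unless $(w_1,\dots,w_s)$ is Levi-movable, and equals the ordinary structure constant $d$ of~\eqref{multiplicity} otherwise. Since this coefficient is $1$, the tuple $(w_1,\dots,w_s)$ is Levi-movable and $d=1$; hence Corollary~\ref{cor4.3} and Propositions~\ref{prop6.1} and~\ref{lemma4.2} all apply.

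Next I would prove that $h^0(\mathcal{Y},\mathcal{O}(nR)_{|\mathcal{Y}})=1$ for every $n\geq 1$. Write $\mathcal{Z}^o:=\mathcal{Z}\setminus R$ and $\mathcal{Y}^o:=\mathcal{Y}\setminus R$. By Lemma~\ref{lemma3.1}, $\mathcal{Z}^o$ is smooth and irreducible; and by Proposition~\ref{prop6.1} the closed subset $\mathcal{Z}^o\setminus\mathcal{Y}^o=(\mathcal{Z}\setminus\mathcal{Y})\setminus R$ is contained in $A$, hence of codimension $\geq 2$ in $\mathcal{Z}^o$, so restriction of regular functions is an isomorphism $H^0(\mathcal{Z}^o,\mathcal{O})\cong H^0(\mathcal{Y}^o,\mathcal{O})$. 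The proof of Corollary~\ref{cor4.3} (via Proposition~\ref{jan2}) shows that $\pi_{|\mathcal{Z}}$ is birational and identifies $\mathcal{Z}^o$ with the complement in $(G/B)^s$ of a closed subset of codimension $\geq 2$, whence $H^0(\mathcal{Z}^o,\mathcal{O})=H^0((G/B)^s,\mathcal{O})=\mathbb{C}$. Since $\mathcal{Y}$ is irreducible with $\mathcal{Y}^o$ dense, and $\mathcal{O}(nR)$ is canonically trivial on $\mathcal{Y}^o$, restriction is injective on global sections, so $H^0(\mathcal{Y},\mathcal{O}(nR)_{|\mathcal{Y}})\hookrightarrow H^0(\mathcal{Y}^o,\mathcal{O})=\mathbb{C}$ and thus $h^0(\mathcal{Y},\mathcal{O}(nR)_{|\mathcal{Y}})\leq 1$. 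For the opposite inequality, $R$ is $G$-stable and is the zero divisor of the $G$-equivariant section of the $G$-equivariant line bundle $\mathfrak{L}$ induced by $D\pi$, so the canonical section of $\mathcal{O}(nR)_{|\mathcal{Y}}$ (with zero divisor $nR\cap\mathcal{Y}$) is a nonzero $G$-invariant global section; this also follows from Proposition~\ref{1.6} by taking $n$-th powers of sections and using Borel--Weil. Hence $\dim H^0(\mathcal{Y},\mathcal{O}(nR)_{|\mathcal{Y}})^G=1$.

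Finally I would translate this into representation theory. Proposition~\ref{lemma4.2} gives $H^0(\mathcal{Y},\mathcal{O}(nR)_{|\mathcal{Y}})^G\simeq[V_L(n(\chi_{w_1}-\chi_1))^*\otimes V_L(n\chi_{w_2})^*\otimes\cdots\otimes V_L(n\chi_{w_s})^*]^L$. Since $\chi_1=\chi_e$ is the character by which $L$ acts on the line $\wedge^{\mathrm{top}}T_{\dot e}(G/P)^*$, we have $V_L(n(\chi_{w_1}-\chi_1))=V_L(n\chi_{w_1})\otimes\mathbb{C}_{-n\chi_1}$, and dualizing shows the above space has the same dimension as $\Hom_L(V_L(n\chi_1),V_L(n\chi_{w_1})\otimes\cdots\otimes V_L(n\chi_{w_s}))$; with the previous step this proves~\eqref{7.6}. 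For the equivalence with~\eqref{7.7}, I would record that Levi-movability forces $\sum_i\chi_{w_i}=\chi_1$ on the centre $\mathfrak{z}(L)$: apply $\wedge^{\mathrm{top}}$ to the $\mathfrak{z}(L)$-equivariant isomorphism $T_{\dot e}(G/P)\cong\bigoplus_i T_{\dot e}(G/P)/T_{\dot e}(l_i\Lambda_{w_i})$ coming from a Levi-movable $(l_1,\dots,l_s)$ and compare $\mathfrak{z}(L)$-characters, using that each $l_i\Lambda_{w_i}$ is $\mathfrak{z}(L)$-stable. It follows that the $L^{ss}$-invariants of $V_L(n\chi_{w_1})\otimes\cdots\otimes V_L(n\chi_{w_s})$ carry the $L$-character $n\chi_1$, hence coincide with the $\Hom$-space of~\eqref{7.6}.

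The crux is the equality $h^0(\mathcal{Y},\mathcal{O}(nR)_{|\mathcal{Y}})=1$: it holds precisely because, by Proposition~\ref{prop6.1} (and behind it Theorem~\ref{degree}), $\mathcal{Z}$ and $\mathcal{Y}$ differ only along the ramification divisor $R$ itself and along a subset of codimension $\geq 2$, so twisting by $nR$ cannot produce global sections beyond the constants. Granting Propositions~\ref{prop6.1} and~\ref{lemma4.2}, the remainder is routine bookkeeping with the character $\chi_1$ and Borel--Weil.
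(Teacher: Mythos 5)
Your proposal is correct and follows essentially the same route as the paper's proof: Levi-movability and $d=1$ extracted from the $\odot_0$-hypothesis via [BK], Proposition~\ref{lemma4.2} for the invariant-theoretic identification, Proposition~\ref{prop6.1} together with the Zariski-main-theorem argument behind Proposition~\ref{jan2} and Corollary~\ref{cor4.3} to pin the relevant $h^0$ down to $1$, and the central-character bookkeeping for the equivalence of \eqref{7.6} and \eqref{7.7}. The only (harmless) variations are that you bound $H^0(\mathcal{Y},\mathcal{O}(nR)|_{\mathcal{Y}})$ by restricting directly to $\mathcal{Y}\setminus R$ and using the codimension-two estimates, whereas the paper first embeds this space into $H^0(\mathcal{Z},\mathcal{O}(m(n)R))$ and then applies Corollary~\ref{cor4.3}, and that you rederive the identity $\sum_i\chi_{w_i}=\chi_1$ on $\mathfrak{z}(L)$ rather than citing [BK, Theorem 15].
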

\begin{proof} By [BK, Theorem 15 and Proposition 17], the $s$-tuple $(w_1, \dots, w_s)$
is $L$-movable. Hence,
 by Proposition ~\ref{lemma4.2}, we have
 $$H^0(\mathcal{Y},\mathcal{O}(nR)_{|\mathcal{Y}})^G\simeq \bigl[V_L(n(\chi_{w_1}
-\chi_1))^*\otimes V_L(n\chi_{w_2})^*\otimes \dots \otimes
 V_L(n\chi_{w_s})^*\bigr]^L.$$
 Moreover, by Proposition ~\ref{prop6.1},
 $$H^0(\mathcal{Y},\mathcal{O}(nR)_{|\mathcal{Y}})\hookrightarrow
 H^0(\mathcal{Z},\mathcal{O}(m(n)R)), \text{for some}\, \,m(n)>0.$$
 Finally, by Corollary ~\ref{cor4.3}, for any $m\geq 1$,
 $$h^0(\mathcal{Z},\mathcal{O}(mR))=1.$$
 But, since the constants belong to $H^0(\mathcal{Y},\mathcal{O}(nR)_{|\mathcal{Y}})$, we have
 $$\dim (H^0(\mathcal{Y},\mathcal{O}(nR)_{|\mathcal{Y}})^G)\geq 1.$$
 This proves the identity ~\eqref{7.6} since
 ($\chi_1$ being a trivial character on the maximal torus of $L^{ss}$)
 $$\bigl[V_L(n(\chi_{w_1}
-\chi_1))^*\otimes V_L(n\chi_{w_2})^*\otimes \dots \otimes
 V_L(n\chi_{w_s})^*\bigr]^L \simeq
 \Hom_L(V_L(n\chi_1), V_L(n\chi_{w_1})\otimes \dots \otimes V_L(n\chi_{w_s})).$$

 The equivalence of ~\eqref{7.6} with ~\eqref{7.7} follows from [BK, Theorem 15].
 \end{proof}

 \begin{example} \label{example} (1) The converse to the above theorem is false in general. For example,
consider $G=\Sp(2\ell)$, $G/P$ the Lagrangian Grassmannian $LG(\ell,2\ell)$. It is cominuscule, so
the structure constants for the singular cohomology and the deformed cohomology
$\odot_0$  are the same.
The cells
in $LG(\ell,2\ell)$ are parametrized by the strict partitions
${\bf a}: (a_1 > a_2 > \dots > a_r > 0)$ and $a_1\leq \ell, r\leq \ell$ (cf. [FP, Page 29]).

The corresponding Levi subgroup is $GL(\ell)$, so the  Fulton conjecture (Theorem
~\ref{1.1}) holds.
 Now, take
$\ell=3$
and consider the cells in $LG(3,6)$ corresponding to the strict partitions $(1), (2>1), (2).$
 The corresponding
intersection number is $2$. The corresponding representations of the Levi subgroup have Young
diagrams $(2\geq 0\geq 0), (3\geq 3\geq 0)$ and $(3\geq 1\geq 0)$ respectively. Hence,
the dimension of
the invariant subspace for the corresponding tensor product of the Levi is $1$.

(2) In the above example, the intersection number is strictly larger than the
dimension of the invariant subspace for the corresponding tensor product. We also have examples where
the intersection number is strictly smaller than the
dimension of the invariant subspace for the corresponding tensor product. Take, for $G/P$ the
 Lagrangian Grassmannian $LG(5,10)$
and consider the cells  corresponding to the strict partitions $(3>1), (3>2), (4>2).$
 The
intersection number is $4$. The corresponding representations of the Levi subgroup have Young
diagrams $(4\geq 3\geq 1\geq 0\geq 0), (4\geq 4\geq 2\geq 0\geq 0)$ and
 $(5\geq 4\geq 2\geq 1\geq 0)$ respectively. Hence,
the dimension of
the invariant subspace for the corresponding tensor product of the Levi is $5$.

(3) Following the convention in [Bo], for $L$ of type $G_2$,
$[V(6\omega_1)\otimes V(6\omega_2)\otimes V(7\omega_2)]^L=1$,
and $[V(12\omega_1)\otimes V(12\omega_2)\otimes V(14\omega_2)]^L=2$. Similarly,
$[V(6\omega_1)\otimes V(6\omega_2)\otimes V(10\omega_1+\omega_2)]^L=1$ and
$[V(12\omega_1)\otimes V(12\omega_2)\otimes V(20\omega_1+2\omega_2)]^L=3,$ where
$\{\omega_1,\omega_2\}$ are the fundamental weights. Thus, the
 direct generalization of the Fulton's
 conjecture is false for general semisimple $L$.

 (4) There are examples of $w_1,w_2,w_3\in W^P$ such that
 \beqn \label{7.8}
 [X_{w_1}]\cdot [X_{w_2}]\cdot [X_{w_3}]=[X_e]
 \in H^{*}(G/P),
 \eeqn
 but ~\eqref{7.7} is false. Take, for example, $G=\Sp(6)$ and $P$ to be the maximal
 parabolic with $\Delta\setminus \Delta(P)=\{\alpha_2\}$ (following the convention in [Bo]).
 Now, take $w_1=w_2=s_1s_3s_2s_1s_3s_2,  w_3=s_3s_2.$ Then,
 ~\eqref{7.8} is satisfied (cf. [KLM, Theorem 4.6]). In this case, restricted to the
 Cartan of
  $L^{ss}$,
 we have $\chi_{w_1}=\chi_{w_2}=\omega_1+\omega_3, \chi_{w_3}=3\omega_1+\omega_3.$
 Thus, for any $n\geq 1$,
 $$\dim \bigl(\bigl[V_L(n\chi_{w_1})\otimes V_L(n\chi_{w_2}) \otimes V_L(n\chi_{w_3})\bigr]^{L^{ss}}\bigr)
 =0.$$
\end{example}
\begin{remark}\label{remarkable} {\em (1) If we specialize Theorem ~\ref{main} to $G=\GL(m)$ and $P$ any
 maximal parabolic subgroup, then (as explained in the introduction) we readily obtain a proof of Fulton's conjecture
 proved by Knutson-Tao-Woodward [KTW] (Belkale [B$_2$] and Ressayre [R$_2$] gave
 other geometric proofs)
 asserting the following:

 Let $V_L(\lambda_1), \dots, V_L(\lambda_s)$ be finite dimensional irreducible
 representations of $L=\GL(r)$ with highest weights $\lambda_1, \dots , \lambda_s$
 respectively. Assume that  $\bigl[V_L(\lambda_1)\otimes \dots \otimes
 V_L(\lambda_s)\bigr]^{L^{ss}}$ is one dimensional. Then, for any $n\geq 1$,
 $\bigl[V_L(n\lambda_1)\otimes \dots \otimes
 V_L(n\lambda_s)\bigr]^{L^{ss}}$ again is one dimensional.

 In fact, since any maximal parabolic subgroup in $GL(m)$ is cominuscule, by a result
 of Brion-Polo [BP], we have $\mathcal{Z}=\mathcal{Y}$. Hence,
 Proposition ~\ref{prop6.1} and the results from Section 7 are {\it not}
 needed in this case.

(2) We now specialize Theorem ~\ref{main}  to $G=\SP(2\ell)$ and
$G/P=\LG(\ell,2\ell)$  the Lagrangian Grassmannian.
Under the assumption  that some structure coefficient of
$(H^*(\LG(\ell,2\ell)),\odot_0)$ in the Schubert basis is equal to one,
the conclusion of the theorem
is that some Littlewood-Richardson coefficient is equal to one.
In \cite{RLG}, it is shown that this assumption
is fulfilled if and only if some Littlewood-Richardson coefficient is equal to one.
Hence by combining Theorem ~\ref{main}  and \cite{RLG}, we obtain the following
result on Littlewwod-Richardson coefficients.

Let $\lambda$, $\mu$ and $\nu$ be three partitions. We assume that the
 Young diagrams of $\lambda$, $\mu$ and $\nu$  are contained in
the square of size $\ell$ and are symmetric relative to the diagonal.
Then, for the Littlewood-Richardson coeffcients for $\GL(\ell)$,
$$
c_{\lambda,\,\mu}^\nu=1 \  \Rightarrow \ c_{\lambda',\,\mu'}^{\nu'}=1,
$$
 where $\lambda'$ and $\mu'$   are obtained from $\lambda$
 and $\mu$  by adding
  one to some initial parts (for the details, see [R$_3$]), and $\nu'$ is defined by dualizing
the rule.}
\end{remark}

\bibliographystyle{plain}
\def\noopsort#1{}

\vskip6ex

\noindent
Addresses:

\noindent
P.B.: Department of Mathematics, University of North Carolina, Chapel Hill, NC 27599-3250, USA\\
\noindent(email: belkale@email.unc.edu)

\noindent
S.K.: Department of Mathematics, University of North Carolina, Chapel Hill, NC 27599-3250, USA\\
\noindent(email: shrawan@email.unc.edu)

\noindent
N.R.: Department of Mathematics, Universit\'e Montpellier II, CC 51-Place Eug\`ene
Bataillon, 34095 Montpellier,
Cedex 5, France\\
\noindent(email: ressayre@math.univ-montp2.fr)

  \end{document}